\documentclass[a4paper,oneside,10pt]{article}%
\usepackage{amsmath}
\usepackage{xcolor}
\usepackage{amsfonts}
\usepackage{amssymb}
\usepackage{graphicx}
\usepackage[square,numbers,sort&compress]{natbib}%
\setcounter{MaxMatrixCols}{30}
%TCIDATA{OutputFilter=latex2.dll}
%TCIDATA{Version=5.50.0.2960}
%TCIDATA{CSTFile=40 LaTeX article.cst}
%TCIDATA{Created=Monday, May 21, 2012 20:44:23}
%TCIDATA{LastRevised=Thursday, April 23, 2020 13:25:51}
%TCIDATA{<META NAME="GraphicsSave" CONTENT="32">}
%TCIDATA{<META NAME="SaveForMode" CONTENT="1">}
%TCIDATA{BibliographyScheme=Manual}
%TCIDATA{<META NAME="DocumentShell" CONTENT="Standard LaTeX\Blank - Standard LaTeX Article">}
%TCIDATA{Language=American English}
%BeginMSIPreambleData
\providecommand{\U}[1]{\protect\rule{.1in}{.1in}}
%EndMSIPreambleData

\pagenumbering{arabic}
\setlength{\textwidth}{165mm}
\setlength{\textheight}{220mm}
\headsep=15pt \topmargin=-5mm \oddsidemargin=-0.36cm
\evensidemargin=-0.36cm \raggedbottom
\newtheorem{theorem}{Theorem}[section]

\newtheorem{corollary}[theorem]{Corollary}

\newtheorem{definition}[theorem]{Definition}
\newtheorem{assumption}[theorem]{Assumption}

\newtheorem{lemma}[theorem]{Lemma}

\newtheorem{proposition}[theorem]{Proposition}
\newtheorem{remark}[theorem]{Remark}

\newenvironment{proof}[1][Proof]{\noindent\textbf{#1.} }{\ \rule{0.5em}{0.5em}}
\numberwithin{equation}{section}

\begin{document}

\title{Kalman-Bucy filtering and minimum mean square estimator under uncertainty}
\author{Shaolin Ji
\and Chuiliu Kong
\and Chuanfeng Sun
\and Ji-Feng Zhang}

\date{}
\maketitle

\textbf{Abstract}. In this paper, we study a generalized Kalman-Bucy filtering
problem under uncertainty. The drift uncertainty for both signal process and
observation process is considered and the attitude to uncertainty is
characterized by a convex operator (convex risk measure). The optimal filter
or the minimum mean square estimator (MMSE) is calculated by solving the
minimum mean square estimation problem under a convex operator. In the first
part of this paper, this estimation problem is studied under $g$-expectation
which is a special convex operator. For this case, we prove that there exists
a worst-case prior $P^{\theta^{\ast}}$. Based on this $P^{\theta^{\ast}}$ we
obtained the Kalman-Bucy filtering equation under $g$-expectation. In the
second part of this paper, we study the minimum mean square estimation problem
under general convex operators. The existence and uniqueness results of the
MMSE are deduced.

{\textbf{Key words}.} Kalman-Bucy filtering; minimum mean square estimator;
drift uncertainty; convex operator; minimax theorem; backward stochastic
differential equation

\textbf{AMS subject classifications.} 62M20, 60G35, 93E11, 62F86

\section{Introduction}

It is well-known that Kalman-Bucy filtering is the foundation of modern
filtering theory (see Bensoussan\cite{Bensoussan}, Bian and Crisan \cite{BC}, Liptser and Shiryaev \cite{Liptser}, Xiong \cite{Xiong}). It lays the
groundwork for further study of optimization problems under partial
information in various fields. For example, Duncan and Pasik-Dunan \cite{Duncan1}, {Huang, Wang and Zhang \cite{Huang-Wang-Zhang}}, {\O }ksendal and Sulem \cite{Oksendal-Sulem}, Tang \cite{Tang} studied the
optimal control (game) for partially observed stochastic systems; Lakner
\cite{L}, Bensoussan and Keppo \cite{Bensoussan-Keppo} considered the utility
maximization problem under partial information in mathematical finance and so on.

Let's first recall the classic Kalman-Bucy filtering theory. The model is
described as follows: under the probability measure $\mathbb{P}$,%
\begin{equation}
\left\{
\begin{array}
[c]{rl}%
dx(t) & =(B(t)x(t)+b(t))dt+dw(t),\\
x(0) & =x_{0},\\
dm(t) & =(H(t){x}(t)+h(t))dt+dv(t),\\
m(0) & =0
\end{array}
\right.  \label{eq-intro-system-1}%
\end{equation}
where $x(\cdot)$ is the signal process, $m(\cdot)$ is the observation process,
$w(\cdot)$ and $v(\cdot)$ are two independent Brownian motions. The
coefficients $B(t),\ H(t),\ b(t),\ h(t)$ are deterministic uniformly bounded
functions in $t\in\lbrack0,T]$, $x_{0}$ is a given constant vector. Set
$\mathcal{Z}_{t}=\sigma\{m(s);0\leq s\leq t\}$ which represents all the
observable information up to time $t$. The Kalman filter $\bar{x}(t)$ of
$x(t)$ is
\[
\bar{x}(t)=\mathbb{E}_{\mathbb{P}}[x(t)|\mathcal{Z}_{t}]
\]
where $\mathbb{E}_{\mathbb{P}}[\cdot]$ denotes the expectation with respect to
the probability measure $\mathbb{P}$. It is well-known that the optimal
estimator $\bar{x}(t)$ of the signal $x(t)$ solves the following minimum mean
square estimation problem:%
\[
\min_{\zeta\in L_{\mathcal{Z}_{t}}^{2}(\Omega,P)}E_{P}\Vert x(t)-\zeta
\Vert^{2}.
\]
So $\bar{x}(t)$ is also called the minimum mean square estimator, or MMSE for short.

In this paper, we suppose that there exists model uncertainty for the system
(\ref{eq-intro-system-1}). In other words, we don't know the true probability
$\mathbb{P}$ and only know that it falls in a set of probability measures
$\mathcal{P}$ which is called the prior set. For continuous-time models, Chen
and Epstein \cite{Chen-Epstein} first proposed one kind of model uncertainty
which is usually called drift uncertainty. Later Epstein and Ji proposed more
general uncertainty models (see \cite{Epstein-Ji-1} and \cite{Epstein-Ji-2}
for details), {Guo \cite{Guo} introduced some basic scientific problems concerning the estimation, control, and games of dynamical systems with uncertainty and shared some related theoretical progress}. In this paper, we introduce the following drift uncertainty
model: for every $P^{\theta}\in\mathcal{P}$, consider%
\begin{equation}
\left\{
\begin{array}
[c]{rl}%
dx(t) & =(B(t)x(t)+b(t)-\theta_{1}(t))dt+dw^{\theta_{1}}(t),\\
x(0) & =x_{0},\\
dm(t) & =(H(t){x}(t)+h(t)-\theta_{2}(t))dt+dv^{\theta_{2}}(t),\\
m(0) & =0,
\end{array}
\right.  \label{intro-state equ}%
\end{equation}
where $w^{\theta_{1}}$ and $v^{\theta_{2}}$ are Brownian motions under
$P^{\theta}$ and $\theta=(\theta_{1},\theta_{2})\in\Theta$ is called the
uncertainty parameter. When $\theta$ changes, the distribution of the
solutions $x(\cdot)$ and $m(\cdot)$ of the above equations also change. The
question now is how to calculate the Kalman filter in such an uncertain
environment. A natural idea is to calculate the worst-case minimum mean square
estimation problem:%
\begin{equation}
\min_{\zeta}\sup_{P^{\theta}\in\mathcal{P}}E_{P^{\theta}}(\Vert x(t)-\zeta
\Vert^{2}) \label{intro-problem}%
\end{equation}
which is to minimize the maximum expected loss over a range of possible
models. Recently, Borisov \cite{Borisov1} and \cite{Borisov2} studied this
type of estimator for finite state Markov processes with uncertainty of the
transition intensity and the observation matrices. Allan and Cohen
\cite{Allan-Cohen} investigated the Kalman-Bucy filtering with a uncertainty
parameter by a control approach. {Moreover, in the past decade, much research has been
discussed depending on the technique of $H_{\infty}$ filter, see \cite{Chen-Shen}-\cite{Che-Yang} and so on. Different from this paper, the design goal of $H_{\infty}$ filter is to guarantee that the filtering error system is asymptotically stable, while achieving a prescribed $H_{\infty}$ performance level}. From another perspective,
(\ref{intro-problem}) can be rewritten as a minimum mean square estimation
problem under a sublinear operator:%
\[
\min_{\zeta}\mathcal{E(}\Vert x(t)-\zeta\Vert^{2})
\]
where $\mathcal{E}(\cdot):=\sup_{P^{\theta}\in\mathcal{P}}E_{P^{\theta}}[\cdot]$ is a
sublinear operator. Recently, Ji, Kong and Sun \cite{Ji-Kong-Sun} and
\cite{Ji-Kong-Sun-amtomatica} studied Kalman-Bucy filtering under sublinear
operators when the drift uncertainty appears in the signal process and the
observation process respectively. The related literatures about the minimum
mean square estimation problems under sublinear operators include Sun, Ji
\cite{JS} and Ji, Kong, Sun \cite{Ji-Kong-Sun-1} in which they considered
these problems on $L^{\infty}(\Omega,P)$ and $L^{p}(\Omega,P)$ respectively.

However, when we study some problems, especially financial and risk management
problems, we need to use a more general nonlinear operator: the convex
operator or convex risk measure. For example, in the last decade, the concept
of convex risk measure (a special convex operator) has been extensively
studied in various fields (see F\"{o}llmer, Schied \cite{FS}, Arai, Fukasawa
\cite{Arai-Fukasawa} et al). So it is an interesting problem to solve the
minimum mean square estimation problem under the convex operator. Unlike
sublinear operators, the lack of positive homogeneity results in an extra
penalty term in the expression of convex operators. For the convex operator
$\rho(\cdot)$, that is to say, $\rho(\cdot)$ can be represented as
\[
\rho(\cdot)=\sup_{P^{\theta}\in\mathcal{P}}[E_{P^{\theta}}[\cdot
]-\alpha(P^{\theta})],
\]
where $\alpha(P^{\theta})$ is a penalty function defined on a probability
measure set. If $\rho(\cdot)$ is sublinear, the $\alpha(P^{\theta})$ takes
values in $\{0,\infty\}$. The main difference between this paper and the
previous ones is how to deal with the penalty term.

In this paper, we first generalize the Kalman-Bucy filtering to accommodate
drift uncertainty in both signal process and observation process and the
attitude to uncertainty is characterized by a convex operator (convex risk
measure). In more details, we consider system (\ref{intro-state equ}) and
calculate the MMSE by solving%
\[
\underset{\zeta}{\min}\sup_{P^{\theta}}[E_{P^{\theta}}[\Vert x(t)-\zeta
\Vert^{2}]+\alpha_{0,t}(P^{\theta})]=\underset{\zeta}{\min}\mathcal{E}%
_{g}[\Vert x(t)-\zeta\Vert^{2}]
\]
where
\begin{equation}
\mathcal{E}_{g}[\cdot]:=\sup_{P^{\theta}}[E_{P^{\theta}}[\Vert\cdot\Vert
^{2}]+\alpha_{0,t}(P^{\theta})] \label{g-expectation}%
\end{equation}
is called $g$-expectation introduced by Peng \cite{Peng-1}. In our context,
$\mathcal{E}_{g}[\cdot]$ is a special convex operator and (\ref{g-expectation}%
) is it's dual representation obtained in El Karoui et al \cite{EPQ}. Under
some mild conditions, we prove that there exists a worst-case prior
$P^{\theta^{\ast}}$. Based on this $P^{\theta^{\ast}}$ we obtained the
filtering equation by which the MMSE $\hat{x}$ is governed.

The convex $g$-expectation is just a special convex operator. It is worth
studying the minimum mean square estimation problem under the general convex
operator. In the second part of this paper, we solve the following problem
(For the convenience of readers, we misused some notations in the introduction
and Section \ref{section-general problem}):%
\[
\min_{\zeta}\rho(\Vert{x}(t)-\zeta\Vert^{2})
\]
where $\rho(\cdot)$ is a general convex operator (convex risk measure). The
existence and uniqueness results of the MMSE under the general convex operator
are deduced.

The paper is organized as follows. In Section 2, we give some preliminaries
and formulate our filtering problem under $g$-expectations. In Section 3, the
worst-case prior $P^{\theta^{\ast}}$ is obtained and the corresponding
Kalman-Bucy filtering equation (\ref{convex theta optimal solution}) is
deduced. We study the minimum mean square estimation problem under general
convex operators $L_{\mathcal{F}}^{p}(\mathbb{P})$ and obtain the existence
and uniqueness results of the MMSE in Section 4.

\section{Preliminaries and problem formulation}
%Here are some sub-sections:
Let $(\Omega,\mathcal{F},\mathbb{P})$ be a complete probability space on which
two independent $n$-dimensional and $m$-dimensional Brownian motions
$w(\cdot)$ and $v(\cdot)$ are defined. For the sake of generality, they are
not standard. The means of $w(\cdot)$ and $v(\cdot)$ are zero and the
covariance matrices are $Q(\cdot)$ and $R(\cdot)$ respectively. We assume that
the matrix $R(\cdot)$ is uniformly positive definite. For a fixed time $T>0$,
denote by $\mathbb{F=}\{\mathcal{F}_{t},0\leq t\leq T\}$ the natural
filtration of $w(\cdot)$ and $v(\cdot)$ satisfying the usual conditions. We
assume $\mathcal{F}={\mathcal{F}_{T}}$. For any given Euclidean space
$\mathbb{H}$, denote by $\langle\cdot,\cdot\rangle$ (resp. $\Vert\cdot\Vert$)
the scalar product (resp. norm) of $\mathbb{H}$. Let $A^{\intercal}$ denote
the transpose of a matrix $A$. For a $\mathbb{R}^{n}$-valued vector
$x=(x_{1},\cdot\cdot\cdot,x_{n})^{\intercal}$, $|x|:=(|x_{1}|,\cdot\cdot
\cdot,|x_{n}|)^{\intercal}$; for two $\mathbb{R}^{n}$-valued vectors $x$ and
$y$, $x\leq y$ means that $x_{i}\leq y_{i}$ for $i=1,\cdot\cdot\cdot,n$.
Through out this paper, $0$ denotes the matrix/vector with appropriate
dimension whose all entries are zero. For $1<p<\infty$, denote by
$L_{\mathbb{F}}^{p}(0,T;\mathbb{H})$ the space of all the $\mathbb{F}$-adapted
$\mathbb{H}$-valued stochastic processes on $[0,T]$ such that
\[
\mathbb{E}\left[  \int_{0}^{T}\Vert f(r)\Vert^{p}dr\right]  <\infty,\;\forall
f\in L_{\mathbb{F}}^{p}(0,T;\mathbb{H}).
\]

The Kalman-Bucy filtering theory is based on a reference probability measure
$\mathbb{P}$ for the system \eqref{eq-intro-system-1}. However, if we don't
know the true probability measure $\mathbb{P}$ and only know that it falls in
the set $\mathcal{P}$ which is a suitably chosen space of equivalent
probability measures, then it is naturally to study the worst-case minimum
mean square estimators (MMSE).

\subsection{Prior set and $g$-expectation}
In order to characterize uncertainty, we introduce the prior set $\mathcal{P}$
and $g$-expectation which is a special convex operator.

Let $\theta(\cdot)=(\theta_{1}(\cdot),\theta_{2}(\cdot))^{\intercal}$\ be a
$\mathbb{R}^{n+m}$-valued progressively measurable process on $[0,T]$. For a
given constant $\mu$, let $\Theta$ be the set of all $\mathbb{R}^{n+m}$-valued
progressively measurable processes $\theta$ with $|\theta_{i}(t)|\leq
\mu,\ 0\leq t\leq T$. Define
\begin{equation}
\mathcal{P}=\{P^{\theta}|\frac{dP^{\theta}}{d\mathbb{P}}=f^{\theta
}(T)\ \mbox{for}\ \theta\in\Theta\} \label{covex probability set}%
\end{equation}
where
\[
f^{\theta}(T):=\exp\big(-\int_{0}^{T}\theta_{1}^{\intercal}(t)dw(t)-\frac
{1}{2}\int_{0}^{T}\Vert\theta_{1}(t)\Vert^{2}dt-\int_{0}^{T}\theta
_{2}^{\intercal}(t)dv(t)-\frac{1}{2}\int_{0}^{T}\Vert\theta_{2}(t)\Vert
^{2}dt\big).
\]
Due to the boundedness of $\theta$, the Novikov's condition holds (see
Karatzas, Shreve \cite{K-S}). Therefore, $P^{\theta}$ defined by
\eqref{covex probability set} is a probability measure which is equivalent to
probability measure $\mathbb{P}$ and the processes $w^{\theta_{1}%
}(t)=w(t)+\int_{0}^{t}\theta_{1}(s)ds$ and $v^{\theta_{2}}(t)=v(t)+\int%
_{0}^{t}\theta_{2}(s)ds$ are Brownian motions under this probability measure
$P^{\theta}$ by Girsanov's theorem. The set $\Theta$ characterizes the
ambiguity and $\mathcal{P}$ is usually called the prior set.

Then, we introduce $g$-expectation and it's dual representation (see
\cite{Peng-1} and \cite{EPQ}). In the following we will see that
$g$-expectation is a powerful tool for studying uncertainty.

\begin{definition}
\label{standard bsde generator} we call a function $g:\Omega\times
\lbrack0,T]\times\mathbb{R}^{n}\times\mathbb{R}^{m}\rightarrow\mathbb{R}$ a
standard generator if it satisfies the following conditions:

\begin{itemize}
\item $(g(\omega,t,z_{1},z_{2}))_{t\in\lbrack0,T]}$ is an adapted process with
\[
\mathbb{E}\int_{0}^{T}|g(\omega,t,z_{1},z_{2})|^{2}dt<\infty
\]
for all $z_{1}\in\mathbb{R}^{n}$ and $z_{2}\in\mathbb{R}^{m}$;
\item $g(\omega,t,z_{1},z_{2})$ is Lipschitz continuous in $z_{1}$ and $z_{2}$,
uniformly in $t$ and $\omega$: there exists constant $\mu>0$ such that for all
$z_{1},\tilde{z}_{1}\in\mathbb{R}^{n}$ and $z_{2},\tilde{z}_{2}\in
\mathbb{R}^{m}$ we have
\[
|g(\omega,t,z_{1},z_{2})-g(\omega,t,\tilde{z}_{1},\tilde{z}_{2})|\leq \mu(\Vert
z_{1}-\tilde{z}_{1}\Vert+\Vert{z}_{2}-\tilde{z}_{2}\Vert);
\]
\item $g(\omega,t,0,0)=0$ for all $t\geq0$ and $\omega\in\Omega$.
\end{itemize}
\end{definition}

For a standard generator $g$, the following backward stochastic differential
equation (BSDE for short)
\[
\left\{
\begin{array}
[c]{rl}%
-dY(t) & =g(t,Z_{1}(t),Z_{2}(t))dt-Z_{1}^{\intercal}(t)dw(t)-Z_{2}^{\intercal
}(t)dv(t),\;t\in\lbrack0,T]\\
Y(T) & =\xi
\end{array}
\right.
\]
with terminal condition $\xi\in L_{\mathcal{F}_{T}}^{2}(\Omega,\mathbb{P})$
has a unique square integrable solution $(Y(t),\\ Z_{1}(t), Z_{2}(t))_{t\in
(0,T]}$ (see \cite{Peng-1}). Peng \cite{Peng-1} calls $Y(t):=\mathcal{E}%
_{g}(\xi|\mathcal{F}_{t})$ the (condition) $g$-expectation of $\xi$ at time
$t$.

\begin{definition}
\label{convex bsde generator} A standard generator $g$ is called a convex
generator if $g(\omega,t,z_{1},z_{2})$ is convex in $z_{1}$ and $z_{2}$ for
$z_{1}\in\mathbb{R}^{n}$ and $z_{2}\in\mathbb{R}^{m}$. The $g$-expectation
with a convex generator is called the convex $g$-expectation.
\end{definition}

Now we give the dual representation of the convex $g$-expectation through the
prior set and the concave dual function of $g$.

Let
\begin{equation*}
\begin{array}
[c]{rl}%
&G(\omega,t,\theta_{1},\theta_{2})=\inf_{z_{1}\in\mathbb{R}^{n},z_{2}%
\in\mathbb{R}^{m}}[g(\omega,t,z_{1},z_{2})+\langle z_{1},\theta_{1}%
\rangle+\langle z_{2},\theta_{2}\rangle],\\
&(\omega\in\Omega,t\in
\lbrack0,T],\theta_{1}\in\mathbb{R}^{n},\theta_{2}\in\mathbb{R}^{m})
\end{array}
\end{equation*}
be the concave dual function of $g(\omega,t,z_{1},z_{2})$.

EI Karoui et al. \cite{EPQ} (also see Delbaen et al. \cite{Delbaen2010})
established the following dual representation for $g$-expectation: for a
$\mathcal{F}_{s}$-measurable random variable $\xi$, the $g$-expectation at
time $t$ can be represented as%
\begin{equation}
\mathcal{E}_{g}(\xi|\mathcal{F}_{t})=\sup_{P^{\theta}\in\mathcal{P}%
}[E_{P^{\theta}}[\xi|\mathcal{F}_{t}]+\alpha_{t,s}(P^{\theta})]
\label{convex operator representation}%
\end{equation}
where
\begin{equation}
\alpha_{t,s}(P^{\theta}):=E_{P^{\theta}}[\int_{t}^{s}G(r,\theta_{1}%
(r),\theta_{2}(r))dr|\mathcal{F}_{t}],\;0\leq t\leq s\leq T.
\label{penalty operator}%
\end{equation}

%%%%%%%%%%%%%%%%%%%%%%%%%%%%%%%%%%%%%%%%%%%%%%%%%%%%%%%%%%%%%%%%%%%%%%%%%%%%%%%%%%%%%%%%%%%%%%%%%%%%%%%%%%%%%%%%%%%%%%%%%%%%%%%%%%%%%%%%%%%%%%%%%%%%%%%%%%%%%%%%%%
\begin{remark}
{It is easy to check that $\mathcal{E}_{g}(\cdot|\mathcal{F}_{t})$ is a special
convex operator (see (\ref{def2.1})). Moreover, if we let the standard generator $g(t,z_1,z_2)=\mu(|z_1|+|z_2|)$, then the corresponding dual function of $g(t,z_1,z_2)$ and penalty term $\alpha
_{t,s}(P^{\theta})$ are simultaneously equal to $0$. Then the above convex operator $\mathcal{E}_{g}(\cdot|\mathcal{F}_{t})$ degenerates to a sublinear operator.}
\end{remark}

\subsection{Problem formulation \label{problem formulation--kalman-bucy}}

We formulate the Kalman-Bucy filtering problem under uncertainty. For every
$\theta\in\Theta$, under the probability measure $P^{\theta}\in\mathcal{P}$
\begin{equation}
\left\{
\begin{array}
[c]{rl}%
dx(t) & =(B(t)x(t)+b(t)-\theta_{1}(t))dt+dw^{\theta_{1}}(t),\\
x(0) & =x_{0},\\
dm(t) & =(H(t){x}(t)+h(t)-\theta_{2}(t))dt+dv^{\theta_{2}}(t),\\
m(0) & =0,
\end{array}
\right.  \label{convex generalize K-B}%
\end{equation}
where $x(\cdot)\in L_{\mathbb{F}}^{2}(0,T;\mathbb{R}^{n})$ is the signal
process and ${m}(\cdot)\in L_{\mathbb{F}}^{2}(0,T;\mathbb{R}^{m})$ is the
observation process. The coefficients $B(t)\in\mathbb{R}^{n\times n}%
,\ H(t)\in\mathbb{R}^{m\times n},\ b(t)\in\mathbb{R}^{n},\ h(t)\in
\mathbb{R}^{m}$ are deterministic uniformly bounded functions in $t\in
\lbrack0,T]$, $x_{0}\in\mathbb{R}^{n}$ is a given constant vector. Set
\[
\mathcal{Z}_{t}=\sigma\{m(s);0\leq s\leq t\}
\]
which represents all the observable information up to time $t$. We want to
calculate the MMSE of the signal $x(t)$ by solving the following worst-case
minimum mean square estimation problem:%
\begin{equation}
\begin{array}
[c]{rl}%
\inf\limits_{\zeta(t)\in L_{\mathcal{Z}_{t}}^{2+\epsilon}(\Omega,\mathbb{P}%
,\mathbb{R}^{n})}\mathcal{E}_{g}(\Vert{x}(t)-\zeta(t)\Vert^{2})=&\inf\limits
_{\zeta(t)\in L_{\mathcal{Z}_{t}}^{2+\epsilon}(\Omega,\mathbb{P}%
,\mathbb{R}^{n})}\sup\limits_{P^{\theta}\in\mathcal{P}}[E_{P^{\theta}}(\Vert
{x}(t)-\zeta(t)\Vert^{2})\\
&+\alpha_{0,t}(P^{\theta})]
\end{array}
\label{convex robust problem}%
\end{equation}
where $L_{\mathcal{Z}_{t}}^{2+\epsilon}(\Omega,\mathbb{P},\mathbb{R}^{n})$ is
the set of all the $\mathbb{R}^{n}$-valued $(2+\epsilon)$ integrable
$\mathcal{Z}_{t}$-measurable random variables and $0<\epsilon<1$.

\begin{definition}
If $\hat{x}(t)\in L_{\mathcal{Z}_{t}}^{2+\epsilon}(\Omega,\mathbb{P}%
,\mathbb{R}^{n})$ satisfies%
\[
\mathcal{E}_{g}(\Vert{x}(t)-\hat{x}(t)\Vert^{2})=\inf_{\zeta(t)\in
L_{\mathcal{Z}_{t}}^{2+\epsilon}(\Omega,\mathbb{P},\mathbb{R}^{n})}%
\mathcal{E}_{g}(\Vert{x}(t)-\zeta(t)\Vert^{2}),
\]
then we call $\hat{x}(t)$ the minimum mean square estimator (MMSE) of ${x}(t)$.
\end{definition}

\section{Kalman-Bucy filtering under $g$-expectation\label{Kalman-Bucy filtering under uncertainty}}

In this section, we calculate the minimum mean square estimator $\hat{{x}}(t)$
of the problem \eqref{convex robust problem} for $t\in\lbrack0,T]$. Without
loss of generality, all the statements in this section are only proved in the
one dimensional case.

\begin{lemma}
\label{convex compact} The set $\{\frac{dP^{\theta}}{d\mathbb{P}}:P^{\theta
}\in\mathcal{P}\}\subset L^{1+\frac{2}{\epsilon}}(\Omega,\mathcal{F}%
,\mathbb{P})$ is $\sigma(L^{1+\frac{2}{\epsilon}}(\Omega,\mathcal{F}%
,\mathbb{P}), L^{1+\frac{\epsilon}{2}}(\Omega,\mathcal{F}, \mathbb{P}))$-compact
and $\mathcal{P}$ is convex.
\end{lemma}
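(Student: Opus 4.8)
The plan is to exploit reflexivity. Since $1<1+\frac{2}{\epsilon}<\infty$, the space $L^{1+\frac{2}{\epsilon}}(\Omega,\mathcal{F},\mathbb{P})$ is reflexive with (pre)dual $L^{1+\frac{\epsilon}{2}}(\Omega,\mathcal{F},\mathbb{P})$ (note $\tfrac{1}{1+2/\epsilon}+\tfrac{1}{1+\epsilon/2}=1$), so the topology $\sigma(L^{1+\frac{2}{\epsilon}},L^{1+\frac{\epsilon}{2}})$ coincides with the weak topology. By the reflexive-space characterization (Banach--Alaoglu together with Eberlein--\v{S}mulian), it then suffices to prove that the set $D:=\{f^{\theta}(T):\theta\in\Theta\}$ is (i) norm-bounded and (ii) weakly closed in $L^{1+\frac{2}{\epsilon}}$, and separately that $\mathcal{P}$ is convex. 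Weak compactness of $D$ then gives the asserted $\sigma(L^{1+\frac{2}{\epsilon}},L^{1+\frac{\epsilon}{2}})$-compactness.

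For boundedness, set $p=1+\frac{2}{\epsilon}$ and write $(f^{\theta}(T))^{p}$ as a stochastic exponential multiplied by a correction factor of the form $\exp\big(\frac{p^{2}-p}{2}\int_{0}^{T}(\|\theta_{1}\|^{2}+\|\theta_{2}\|^{2})\,dt\big)$. Since $|\theta_{i}|\leq\mu$ and $Q,R$ are uniformly bounded, the correction factor is deterministically bounded, while the exponential martingale has expectation one by Novikov's condition; taking expectations yields a uniform estimate $\mathbb{E}[(f^{\theta}(T))^{p}]\leq C(p,\mu,T)$ independent of $\theta$. For convexity, fix $\theta,\theta'\in\Theta$ and $\lambda\in[0,1]$, and consider the strictly positive $\mathbb{P}$-martingale $N_{t}:=\lambda f^{\theta}(t)+(1-\lambda)f^{\theta'}(t)$ with $N_{0}=1$. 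By the martingale representation theorem $dN_{t}=-N_{t}(\psi_{1}^{\intercal}dw+\psi_{2}^{\intercal}dv)$; matching this with $\lambda\,df^{\theta}+(1-\lambda)\,df^{\theta'}$ identifies $\psi_{i}(t)$ as the convex combination of $\theta_{i}(t)$ and $\theta_{i}'(t)$ with weights $\lambda f^{\theta}(t)/N_{t}$ and $(1-\lambda)f^{\theta'}(t)/N_{t}$, so $|\psi_{i}|\leq\mu$, i.e. $\psi\in\Theta$ and $N_{T}=f^{\psi}(T)$. Hence $\lambda P^{\theta}+(1-\lambda)P^{\theta'}=P^{\psi}\in\mathcal{P}$.

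It remains to prove weak closedness. Since $D$ is convex, by Mazur's theorem it suffices to show $D$ is strongly closed. Let $f^{\theta_{k}}(T)\to\xi$ in $L^{1+\frac{2}{\epsilon}}$. As $\Theta$ is a bounded closed convex subset of the Hilbert space $L_{\mathbb{F}}^{2}(0,T;\mathbb{R}^{n+m})$, it is weakly compact, so after passing to a subsequence $\theta_{k}\rightharpoonup\theta^{\ast}\in\Theta$ weakly in $L^{2}$. I claim $\xi=f^{\theta^{\ast}}(T)$. Because $1+\frac{2}{\epsilon}>2$, convergence of the terminal values holds in $L^{2}(\Omega)$; passing to conditional expectations gives $f^{\theta_{k}}(t)\to\Xi_{t}:=\mathbb{E}[\xi|\mathcal{F}_{t}]$ in $L^{2}(\Omega)$ for each $t$, and by dominated convergence (via the uniform $p$-th moment bound) in $L^{2}(\Omega\times[0,T])$. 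Now pass to the limit in the linear equation $f^{\theta_{k}}(t)=1-\int_{0}^{t}f^{\theta_{k}}(s)(\theta_{k,1}^{\intercal}dw+\theta_{k,2}^{\intercal}dv)$: the integrand $f^{\theta_{k}}(s)\theta_{k}$ is the product of a strongly convergent factor and a weakly convergent factor, so it converges weakly in $L^{2}(\Omega\times[0,T])$ to $\Xi_{s}\theta^{\ast}$, and since the It\^{o} integral is a bounded linear (hence weak-to-weak continuous) operator, the stochastic integrals converge weakly in $L^{2}(\Omega)$. Thus $\Xi$ solves the linear SDE driven by $\theta^{\ast}$, whose unique solution is $f^{\theta^{\ast}}(\cdot)$, giving $\xi=\Xi_{T}=f^{\theta^{\ast}}(T)\in D$.

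The main obstacle is precisely this final limit passage, because the density map $\theta\mapsto f^{\theta}(T)$ is nonlinear and is not weak-to-weak continuous, so weak convergence of the drifts alone does not identify the limiting density. The resolution is the strong--weak pairing: the densities are upgraded to strong $L^{2}(\Omega\times[0,T])$ convergence (via the martingale structure and the uniform higher-moment bound), while the drifts converge only weakly, and the bilinear term $f^{\theta_{k}}\theta_{k}$ survives the limit exactly because one factor converges strongly. Retaining the constraint $\theta^{\ast}\in\Theta$ is automatic from the weak closedness of the bounded convex set $\Theta$, which is what makes the extracted limiting drift admissible.
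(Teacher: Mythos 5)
Your proof is correct, and for the two components the paper actually argues in detail --- the uniform norm bound and the convexity of $\mathcal{P}$ --- it coincides with the paper's: the bound comes from the same exponential-moment estimate (the paper invokes Theorem \ref{convex AP}), and your drift $\psi$ for $\lambda f^{\theta}+(1-\lambda)f^{\theta'}$ is exactly the paper's $\theta^{\lambda}$, with the same observation that $|\psi_{i}|\leq\mu$ because it is a pointwise convex combination with nonnegative weights summing to one. Where you genuinely diverge is the compactness step. The paper passes directly from norm-boundedness in $L^{1+\frac{2}{\epsilon}}$ to $\sigma(L^{1+\frac{2}{\epsilon}},L^{1+\frac{\epsilon}{2}})$-compactness by citing Simons; in a reflexive space boundedness only yields \emph{relative} weak compactness, so weak closedness of $\mathcal{D}=\{f^{\theta}(T):\theta\in\Theta\}$ is also required, and the paper does not address it. You supply precisely this missing piece: Mazur reduces weak closedness to strong closedness (using the already-established convexity), and you identify the strong limit by extracting a weak $L^{2}$ limit $\theta^{\ast}\in\Theta$ of the drifts and passing to the limit in the linear SDE, the bilinear term surviving via the strong--weak pairing. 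This is sound, and it is the more complete argument. The one step you should write out is the claim that $f^{\theta_{k}}\theta_{k}\rightharpoonup\Xi\theta^{\ast}$ weakly in $L^{2}(\Omega\times[0,T])$: against a general test function $\phi\in L^{2}$ the term $\mathbb{E}\int\Xi\,\theta_{k}\phi$ is not directly controlled, since $\Xi\phi$ is only in $L^{1}$; instead test against $\phi\in L^{\infty}$ (dense in $L^{2}$) and combine with the uniform bound $\Vert f^{\theta_{k}}\theta_{k}\Vert_{L^{2}}\leq\mu\sup_{k}\Vert f^{\theta_{k}}\Vert_{L^{2}}$ to identify the weak limit of any subsequence. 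Two harmless remarks: both your proof and the paper's implicitly require the covariances $Q,R$ to be bounded for the moment estimate (the cited Girsanov bound is stated for standard Brownian motion), and your sign convention for $f^{\theta}$ follows the definition in \eqref{covex probability set} while the paper's proof silently flips it; neither affects the result.
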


\begin{proof}
Since $\theta$ is bounded, by Theorem \ref{convex AP} in the Appendix, the set
$\{\frac{dP^{\theta}}{d\mathbb{P}}:P^{\theta}\in\mathcal{P}\}$ is bounded in
norm $\|\cdot\|_{1+\frac{2}{\epsilon}}$. From Theorem $4.1$ of Chapter $1$ in
Simons \cite{Simons}, we know that the set $\{\frac{dP^{\theta}}{d\mathbb{P}%
}:P^{\theta}\in\mathcal{P}\}$ is $\sigma(L^{1+\frac{2}{\epsilon}}%
(\Omega,\mathcal{F},\mathbb{P}),L^{1+\frac{\epsilon}{2}}(\Omega,\mathcal{F}%
,\mathbb{P}))$-compact.

Let $\theta^{1}=(\theta_{1}^{1},\theta_{2}^{1})^{\intercal}$ and $\theta
^{2}=(\theta_{1}^{2},\theta_{2}^{2})^{\intercal}$ belong to $\Theta$.
$f^{\theta^{1}}$ and $f^{\theta^{2}}$ denote the corresponding exponential
martingales: for $t\in\lbrack0,T]$,
\[
f^{\theta^{i}}(t)=\exp(\int_{0}^{t}\theta_{1}^{i}(s)dw(s)-\frac{1}{2}\int%
_{0}^{t}(\theta_{1}^{i}(s))^{2}ds+\int_{0}^{t}\theta_{2}^{i}(s)dv(s)-\frac
{1}{2}\int_{0}^{t}(\theta_{2}^{i}(s))^{2}ds)
\]
which satisfies
\[
df^{\theta^{i}}(t)=f^{\theta^{i}}(t)(\theta_{1}^{i}(t)dw(t)+\theta_{2}%
^{i}(t)dv(t)),\ i=1,2.
\]
Let $\lambda_{1}$ and $\lambda_{2}$ be nonnegative constants which belong to
$(0,1)$ with $\lambda_{1}+\lambda_{2}=1$. Define
\[
\left\{
\begin{array}
[c]{rl}%
\theta_{1}^{\lambda}(t) & =\frac{\lambda_{1}\theta_{1}^{1}(t)f^{\theta^{1}%
}(t)+\lambda_{2}\theta_{1}^{2}(t)f^{\theta^{2}}(t)}{\lambda_{1}f^{\theta^{1}%
}(t)+\lambda_{2}f^{\theta^{2}}(t)},\\
\theta_{2}^{\lambda}(t) & =\frac{\lambda_{1}\theta_{2}^{1}(t)f^{\theta^{1}%
}(t)+\lambda_{2}\theta_{2}^{2}(t)f^{\theta^{2}}(t)}{\lambda_{1}f^{\theta^{1}%
}(t)+\lambda_{2}f^{\theta^{2}}(t)}.
\end{array}
\right.
\]
It is easy to verify that
\[
d(\lambda_{1}f^{\theta^{1}}(t)+\lambda_{2}f^{\theta^{2}}(t))=(\lambda
_{1}f^{\theta^{1}}(t)+\lambda_{2}f^{\theta^{2}}(t))(\theta_{1}^{\lambda
}(t)dw(t)+\theta_{2}^{\lambda}(t)dv(t)).
\]
Since $f^{\theta^{i}}(t)>0,\ i=1,2$, the process $\theta^{\lambda}=(\theta
_{1}^{\lambda},\theta_{2}^{\lambda})^{\intercal}$ belongs to $\Theta$.
Therefore, it results in that $\mathcal{P}$ is convex. This completes the proof.
\end{proof}

\begin{lemma}
\label{convex penalty operator} The penalty term $\alpha_{0,T}(P^{\theta})$ is
a concave functional on $\mathcal{P}$.
\end{lemma}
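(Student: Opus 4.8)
The plan is to exploit the convexity structure of $\mathcal{P}$ established in Lemma~\ref{convex compact} together with the concavity of the dual function $G$. Recall that $\mathcal{P}$ is convex because the convex combination $\lambda_{1}P^{\theta^{1}}+\lambda_{2}P^{\theta^{2}}$ coincides with $P^{\theta^{\lambda}}$, where $\theta^{\lambda}=(\theta_{1}^{\lambda},\theta_{2}^{\lambda})^{\intercal}$ is the explicit ratio process appearing in that proof and the associated density is the martingale $M^{\lambda}(t):=\lambda_{1}f^{\theta^{1}}(t)+\lambda_{2}f^{\theta^{2}}(t)$. Thus concavity of $\alpha_{0,T}$ on $\mathcal{P}$ means precisely the inequality
\[
\alpha_{0,T}(P^{\theta^{\lambda}})\ \geq\ \lambda_{1}\alpha_{0,T}(P^{\theta^{1}})+\lambda_{2}\alpha_{0,T}(P^{\theta^{2}}).
\]
First I would rewrite each penalty term in terms of the reference measure $\mathbb{P}$. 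Since $\alpha_{0,T}(P^{\theta})=E_{P^{\theta}}[\int_{0}^{T}G(r,\theta_{1}(r),\theta_{2}(r))\,dr]$ and $f^{\theta}$ is a $\mathbb{P}$-martingale, Fubini's theorem and the tower property give
\[
\alpha_{0,T}(P^{\theta})=\int_{0}^{T}E_{\mathbb{P}}\big[f^{\theta}(r)\,G(r,\theta_{1}(r),\theta_{2}(r))\big]\,dr,
\]
because $E_{\mathbb{P}}[f^{\theta}(T)\,G(r,\cdot)]=E_{\mathbb{P}}[f^{\theta}(r)\,G(r,\cdot)]$ for the $\mathcal{F}_{r}$-measurable integrand $G(r,\theta_{1}(r),\theta_{2}(r))$. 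The boundedness of $\theta$ and the $L^{p}$-estimates behind Lemma~\ref{convex compact} justify these manipulations.

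The heart of the argument is a pointwise (in $(\omega,r)$) perspective-function inequality. Since $g$ is a convex generator, its dual $G(\omega,r,\cdot,\cdot)$ is an infimum of affine functions of $(\theta_{1},\theta_{2})$ and hence concave. The key fact I would invoke is that the perspective of a concave function is jointly concave: the map $(m,p_{1},p_{2})\mapsto m\,G(\omega,r,p_{1}/m,p_{2}/m)$ is jointly concave on $\{m>0\}$. Now observe from the definition of $\theta^{\lambda}$ that $M^{\lambda}(r)\,\theta_{i}^{\lambda}(r)=\lambda_{1}\theta_{i}^{1}(r)f^{\theta^{1}}(r)+\lambda_{2}\theta_{i}^{2}(r)f^{\theta^{2}}(r)$, so the triple $(M^{\lambda}(r),M^{\lambda}(r)\theta_{1}^{\lambda}(r),M^{\lambda}(r)\theta_{2}^{\lambda}(r))$ is exactly the convex combination with weights $\lambda_{1},\lambda_{2}$ of the triples $(f^{\theta^{i}}(r),\theta_{1}^{i}(r)f^{\theta^{i}}(r),\theta_{2}^{i}(r)f^{\theta^{i}}(r))$, $i=1,2$. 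Applying joint concavity of the perspective function to this convex combination yields, for a.e.\ $(\omega,r)$,
\[
M^{\lambda}(r)\,G(r,\theta_{1}^{\lambda}(r),\theta_{2}^{\lambda}(r))\ \geq\ \lambda_{1}f^{\theta^{1}}(r)\,G(r,\theta_{1}^{1}(r),\theta_{2}^{1}(r))+\lambda_{2}f^{\theta^{2}}(r)\,G(r,\theta_{1}^{2}(r),\theta_{2}^{2}(r)).
\]

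Finally I would take $E_{\mathbb{P}}[\cdot]$ on both sides and integrate over $r\in[0,T]$; by the rewriting of the first paragraph the left-hand side becomes $\alpha_{0,T}(P^{\theta^{\lambda}})$ and the two right-hand terms become $\lambda_{1}\alpha_{0,T}(P^{\theta^{1}})$ and $\lambda_{2}\alpha_{0,T}(P^{\theta^{2}})$, which is the desired concavity. I expect the main obstacle to be the second paragraph: recognizing that the convex combination of measures, encoded through the ratio process $\theta^{\lambda}$, matches exactly the perspective-function structure, and rigorously establishing (or citing) the joint concavity of $(m,p)\mapsto mG(p/m)$ together with the measurability and integrability needed to pass from the pointwise inequality to its integrated form.
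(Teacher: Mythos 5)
Your proposal is correct and follows essentially the same route as the paper: both identify the convex combination of densities with the martingale generated by the ratio process $\theta^{\lambda}$ and then exploit concavity of $G$ together with the martingale/tower property to pass back to $\lambda_{1}\alpha_{0,T}(P^{\theta^{1}})+\lambda_{2}\alpha_{0,T}(P^{\theta^{2}})$. Your perspective-function inequality is exactly the paper's application of concavity of $G(t,\cdot,\cdot)$ with the normalized random weights $\lambda_{i}f^{\theta^{i}}(t)/(\lambda_{1}f^{\theta^{1}}(t)+\lambda_{2}f^{\theta^{2}}(t))$, just multiplied through by the positive density $M^{\lambda}(t)$, so the two arguments coincide.
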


\begin{proof}
Let $\theta^{1}=(\theta_{1}^{1},\theta_{2}^{1})^{\intercal}$ and $\theta
^{2}=(\theta_{1}^{2},\theta_{2}^{2})^{\intercal}$ belong to $\Theta$.
$f^{\theta^{1}}$ and $f^{\theta^{2}}$ denote the exponential martingales
respectively as in Lemma \ref{convex compact}. By Lemma \ref{convex compact},
the exponential martingale $(\lambda_{1}\frac{dP^{\theta^{1}}}{d\mathbb{P}%
}+\lambda_{2}\frac{dP^{\theta^{2}}}{d\mathbb{P}})$ is generated by
$\theta^{\lambda}=(\theta_{1}^{\lambda},\theta_{2}^{\lambda})$. It yields that%
\[
\alpha_{0,T}(\lambda_{1}P^{\theta^{1}}+\lambda_{2}P^{\theta^{2}}%
)=\mathbb{E}[(\lambda_{1}f^{\theta^{1}}(T)+\lambda_{2}f^{\theta^{2}}%
(T))\int_{0}^{T}G(t,\theta_{1}^{\lambda}(t),\theta_{2}^{\lambda}(t))dt].
\]
Since $G(t,\cdot,\cdot)$ is a concave function, we have%
\[%
\begin{array}
[c]{rl}%
&\alpha_{0,T}(\lambda_{1}P^{\theta^{1}}+\lambda_{2}P^{\theta^{2}})\\
&\geq
\mathbb{E}[(\lambda_{1}f^{\theta^{1}}(T)+\lambda_{2}f^{\theta^{2}}%
(T))(\int_{0}^{T}\frac{\lambda_{1}f^{\theta^{1}}(t)}{\lambda_{1}f^{\theta^{1}%
}(t)+\lambda_{2}f^{\theta^{2}}(t)}G(t,\theta_{1}^{1}(t),\theta_{2}^{1}(t))\\
&+\int_{0}^{T}\frac{\lambda_{2}f^{\theta^{2}}(t)}{\lambda_{1}f^{\theta^{1}%
}(t)+\lambda_{2}f^{\theta^{2}}(t)}G(t,\theta_{1}^{2}(t),\theta_{2}%
^{2}(t)))dt]\\
& =\mathbb{E}[(\int_{0}^{T}{\lambda_{1}f^{\theta^{1}}(t)}G(t,\theta_{1}%
^{1}(t),\theta_{2}^{1}(t))+\int_{0}^{T}{\lambda_{2}f^{\theta^{2}}%
(t)}G(t,\theta_{1}^{2}(t),\theta_{2}^{2}(t)))dt]\\
& =\mathbb{E}[{\lambda_{1}f^{\theta^{1}}(T)}\int_{0}^{T}G(t,\theta_{1}%
^{1}(t),\theta_{2}^{1}(t))dt]+\mathbb{E}[{\lambda_{2}f^{\theta^{2}}(T)}%
\int_{0}^{T}G(t,\theta_{1}^{2}(t),\\
&\theta_{2}^{2}(t))dt]=\lambda_{1}\alpha_{0,T}(P^{\theta^{1}})+\lambda_{2}\alpha_{0,T}%
(P^{\theta^{2}}).
\end{array}
\]
Therefore, the penalty term $\alpha(P^{\theta})$ is a concave functional on
$\mathcal{P}$. This completes the proof.
\end{proof}
%%%%%%%%%%%%%%%%%%%%%%%%%%%%%%%%%%%%%%%%%%%%%%%%%%%%%%%%%%%%%%%%%%%%%%%%%%%%%%%%%%%%%%%%%%%%%%%%%%%%%%%%%%%%%%%%%%%%%%%%%%%%%%%%%%%%%%%%%%%%%%%%%%%%%%%%%%%%%%%%%%
\begin{remark}
It is easy to check that for any $t\in\lbrack0,T]$, $\alpha_{0,t}(P^{\theta})$
is a concave functional on $\mathcal{P}$ and%
\[
\alpha_{0,t}(P^{\theta})=\mathbb{E}[f^{\theta}(T)\cdot\int_{0}^{t}%
G(s,\theta_{1}(s),\theta_{2}(s))ds]=\mathbb{E}[f^{\theta}(t)\cdot\int_{0}%
^{t}G(s,\theta_{1}(s),\theta_{2}(s))ds].
\]
\end{remark}

\begin{lemma}\label{theta convergence}
Suppose that the stochastic processes $(g_m(t))_{t\in[0,T]}, m=1,2,...$ and $(f^{*}(t))_{t\in[0,T]}$ are exponential martingales respect to the filtration $\mathbb{F}$ and $(g_{m}(T)-f^{\ast}(T))\xrightarrow{L^{2}(\Omega, \mathcal{F},\mathbb{P})}0$. Then for any $0\leq t\leq T$, we have
\[
(\theta^m_i(t)-\theta^{\ast}_i(t))\xrightarrow{L^{2}(\Omega, \mathcal{F},\mathbb{P})}0,\ i=1,2,
\]
where $\theta^{m}(t)=(\theta^m_1(t),\theta^m_2(t))\in\Theta$ and $\theta^{\ast}(t)=(\theta^{\ast}_1(t),\theta^{\ast}_2(t))\in\Theta$ are respectively generators of $(g_m(t))_{t\in[0,T]}, m=1,2,...$ and $(f^{*}(t))_{t\in[0,T]}$.
\end{lemma}

\begin{proof}
Denote the generator of $g_{m}(\cdot)$ by $\theta^{m}=(\theta_{1}^{m}%
,\theta_{2}^{m})$, i.e., for $0\leq t\leq T$,
\[
g_{m}(t)=\exp(\int_{0}^{t}\theta_{1}^{m}(s)dw(s)-\frac{1}{2}\int_{0}%
^{t}(\theta_{1}^{m}(s))^{2}ds+\int_{0}^{t}\theta_{2}^{m}(s)dv(s)-\frac{1}%
{2}\int_{0}^{t}(\theta_{2}^{m}(s))^{2}ds).
\]
We want to prove that $(\theta^{m})$ converges to $\theta^{\ast}$. Since
$g_{m}(\cdot)$ and $f^{\ast}(\cdot)$ are martingales and $g_{m}%
(T)\xrightarrow{L^{2}(\Omega,\mathcal{F},\mathbb{P})}f^{\ast}(T)$, it is easy
to verify that $g_{m}%
(t)\xrightarrow{L^{2}(\Omega,\mathcal{F},\mathbb{P})}f^{\ast}(t)$ for any
$t\in\lbrack0,T]$. Applying It\^{o}'s formula to $(g_{m}(t)-f^{\ast}(t))^{2}$,
we have%
\[%
\begin{array}
[c]{rl}%
&d(g_{m}(t)-f^{\ast}(t))^{2}\\
&=2(g_{m}(t)-f^{\ast}(t))[(g_{m}(t)\theta_{1}%
^{m}(t)-f^{\ast}(t)\theta_{1}^{\ast}(t))dw(t)+(g_{m}(t)\theta_{2}%
^{m}(t)\\
&-f^{\ast}(t)\theta_{2}^{\ast}(t))dv(t)]
+(g_{m}(t)\theta_{1}^{m}(t)-f^{\ast}(t)\theta_{1}^{\ast}(t))^{2}%
dt+(g_{m}(t)\theta_{2}^{m}(t)\\
&-f^{\ast}(t)\theta_{2}^{\ast}(t))^{2}dt.
\end{array}
\]
Taking expectation on both sides,
\begin{equation}
\begin{array}
[c]{rl}
\mathbb{E}[(g_{m}(T)-f^{\ast}(T))^{2}]=&\mathbb{E}[\int_{0}^{T}(g_{m}%
(t)\theta_{1}^{m}(t)-f^{\ast}(t)\theta_{1}^{\ast}(t))^{2}dt]\\
&+\mathbb{E}%
[\int_{0}^{T}(g_{m}(t)\theta_{2}^{m}(t)-f^{\ast}(t)\theta_{2}^{\ast}%
(t))^{2}].
\end{array}
\end{equation}
Since $\lim\limits_{m\rightarrow\infty}\mathbb{E}[(g_{m}(T)-f^{\ast}%
(T))^{2}]=0$, it yields that
\begin{equation}
\lim\limits_{m\rightarrow\infty}\mathbb{E}[\int_{0}^{T}(g_{m}(t)\theta_{i}%
^{m}(t)-f^{\ast}(t)\theta_{i}^{\ast}(t))^{2}dt]=0,\;i=1,2.
\label{convex-convergence1}%
\end{equation}
Note that
\[%
\begin{array}
[c]{rl}%
&\mathbb{E}[\int_{0}^{T}(g_{m}(t)\theta_{1}^{m}(t)-f^{\ast}(t)\theta_{1}^{\ast
}(t))^{2}dt] \\
&=\mathbb{E}\int_{0}^{T}[(f^{\ast}(t)-g_{m}(t))^{2}(\theta
_{1}^{\ast}(t))^{2}+(g_{m}(t))^{2}(\theta_{1}^{\ast}(t)-\theta_{1}^{m}(t))^{2}\\
&+2(f^{\ast}(t)-g_{m}(t))g_{m}(t)\theta_{1}^{\ast}(t)(\theta_{1}^{\ast
}(t)-\theta_{1}^{m}(t))]dt.
\end{array}
\]
Because $g_{m}(t)\xrightarrow{L^{2}(\Omega,\mathcal{F},\mathbb{P})}f^{\ast
}(t)$ and $\theta$ is bounded, we have
\[
\begin{array}
[c]{rl}%
\lim\limits_{m\rightarrow\infty}\mathbb{E}[(f^{\ast}(t)-g_{m}(t))^{2}%
(\theta_{1}^{\ast}(t))^{2}]  &  =0;\\
\lim\limits_{m\rightarrow\infty}\mathbb{E}[(f^{\ast}(t)-g_{m}(t))g_{m}%
(t)\theta_{1}^{\ast}(t)(\theta_{1}^{\ast}(t)-\theta_{1}^{m}(t))]  &  =0.
\end{array}
\]
Therefore, $\lim\limits_{m\rightarrow\infty}\mathbb{E}[(g_{m}(t))^{2}%
(\theta_{1}^{\ast}(t)-\theta_{1}^{m}(t))^{2}]=0$. It results in that
$(g_{m}(t))^{2}(\theta_{1}^{\ast}(t)-\theta_{1}^{m}(t))^{2}%
\xrightarrow{\mathbb{P}}0$. Since $g_{m}(t)\xrightarrow{\mathbb{P}}f^{\ast
}(t)$, we have $(\theta_{1}^{\ast}(t)-\theta_{1}^{m}(t))^{2}%
\xrightarrow{\mathbb{P}}0$. Due to the boundedness of $\theta$, we obtain
$(\theta_{1}^{\ast}(t)-\theta_{1}^{m}%
(t))\xrightarrow{L^{2}(\Omega,\mathcal{F},\mathbb{P})}0$. Similarly, we can
obtain $(\theta_{2}^{\ast}(t)-\theta_{2}^{m}%
(t))\xrightarrow{L^{2}(\Omega,\mathcal{F},\mathbb{P})}0$. This completes the proof.
\end{proof}

In the following, we prove that the worst-case\ prior $P^{\theta^{\ast}}$ exists.

\begin{theorem}
\label{convex equivalent representation} For a given $t\in\lbrack0,T]$, there
exists a $\theta^{\ast}\in\Theta$ such that
\begin{equation}%
\begin{array}
[c]{rl}%
&\inf\limits_{\zeta(t)\in L_{\mathcal{Z}_{t}}^{2+\epsilon}(\Omega
,\mathbb{P},\mathbb{R})}\mathcal{E}_{g}[({x}(t)-\zeta(t))^{2}]\\
&=\sup\limits_{P^{\theta}\in\mathcal{P}}\inf\limits_{\zeta(t)\in L_{\mathcal{Z}%
_{t}}^{2+\epsilon}(\Omega,\mathbb{P},\mathbb{R})}[E_{P^{\theta}}[({x}%
(t)-\zeta(t))^{2}]+\alpha_{0,t}(P^{\theta})]\\
&=\inf\limits_{\zeta(t)\in L_{\mathcal{Z}_{t}}^{2+\epsilon}(\Omega
,\mathbb{P},\mathbb{R})}[E_{P^{\theta^{\ast}}}[({x}(t)-\zeta(t))^{2}%
]+\alpha_{0,t}(P^{\theta^{\ast}})].
\end{array}
\label{convex equivalent equation}%
\end{equation}

\end{theorem}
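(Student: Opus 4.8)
The plan is to read the displayed chain (\ref{convex equivalent equation}) as a minimax interchange together with an attainment statement, and to derive both from Sion's minimax theorem. Set
\[
\Phi(\zeta,P^{\theta}):=E_{P^{\theta}}[(x(t)-\zeta(t))^{2}]+\alpha_{0,t}(P^{\theta})
\]
on $L_{\mathcal{Z}_{t}}^{2+\epsilon}(\Omega,\mathbb{P},\mathbb{R})\times\mathcal{P}$. By the dual representation (\ref{convex operator representation}) the left-hand side of (\ref{convex equivalent equation}) is $\inf_{\zeta}\sup_{P^{\theta}}\Phi$, and the third line is $\inf_{\zeta}\Phi(\cdot,P^{\theta^{\ast}})$; so it suffices to justify the interchange $\inf_{\zeta}\sup_{P^{\theta}}\Phi=\sup_{P^{\theta}}\inf_{\zeta}\Phi$ and to show that the outer supremum is attained at some $P^{\theta^{\ast}}$. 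Both conclusions will come out of a single application of the minimax theorem.

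First I would verify the structural hypotheses. In $\zeta$ the functional $\zeta\mapsto\Phi(\zeta,P^{\theta})$ is quadratic, hence convex and norm-lower-semicontinuous, and the admissible set $L_{\mathcal{Z}_{t}}^{2+\epsilon}$ is convex. In $P^{\theta}$, writing $E_{P^{\theta}}[(x(t)-\zeta(t))^{2}]=\mathbb{E}[f^{\theta}(T)(x(t)-\zeta(t))^{2}]$ displays the first term as affine in the density, while Lemma \ref{convex penalty operator} and the following remark give concavity of $\alpha_{0,t}$ on $\mathcal{P}$; thus $\Phi(\zeta,\cdot)$ is concave. Convexity and $\sigma(L^{1+2/\epsilon},L^{1+\epsilon/2})$-compactness of $\mathcal{P}$ are exactly Lemma \ref{convex compact}. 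One should also record that $\Phi$ is finite on the effective domain, using boundedness of $\theta$ and the resulting bound on $G$.

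The decisive point is weak upper semicontinuity of $\Phi(\zeta,\cdot)$ for the topology in which $\mathcal{P}$ is compact. For the linear term this is forced by the choice of exponents: since $x(t)$ is Gaussian with bounded coefficients and $\zeta\in L^{2+\epsilon}$, one has $(x(t)-\zeta(t))^{2}\in L^{(2+\epsilon)/2}=L^{1+\epsilon/2}$, which is precisely the predual exponent of $1+\frac{2}{\epsilon}$, so $f\mapsto\mathbb{E}[f(x(t)-\zeta(t))^{2}]$ is $\sigma(L^{1+2/\epsilon},L^{1+\epsilon/2})$-continuous. For the penalty I would use that $-\alpha_{0,t}$ is convex, so by Mazur's lemma its strong and weak lower semicontinuities coincide, reducing the claim to upper semicontinuity of $\alpha_{0,t}$ along norm-convergent sequences of densities; there one passes to an a.s.-convergent subsequence and controls $\int_{0}^{t}G(s,\theta_{1},\theta_{2})\,ds$ using boundedness of $\theta$.

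With these hypotheses in hand, Sion's minimax theorem applied with the compact convex set $\mathcal{P}$ as the supremum variable yields $\inf_{\zeta}\sup_{P^{\theta}}\Phi=\sup_{P^{\theta}}\inf_{\zeta}\Phi$, which is the first equality; moreover $P^{\theta}\mapsto\inf_{\zeta}\Phi(\zeta,P^{\theta})$ is an infimum of weakly upper-semicontinuous functions, hence itself weakly upper-semicontinuous on the compact set $\mathcal{P}$, so the supremum is a maximum attained at some $P^{\theta^{\ast}}$, which gives the second equality. The main obstacle is the weak upper semicontinuity of the penalty term: because the drift $\theta$ is recovered from its density only through the stochastic logarithm, norm (or weak) convergence of densities does not transfer directly to convergence of $\int_{0}^{t}G(s,\theta_{1},\theta_{2})\,ds$, and this nonlinear dependence---absent in the sublinear case where the penalty vanishes---is exactly what distinguishes the convex setting and must be handled with care.
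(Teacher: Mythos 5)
Your architecture (one application of a minimax theorem on the weakly compact convex set $\mathcal{P}$, with attainment of the supremum coming from weak upper semicontinuity of $P^{\theta}\mapsto\inf_{\zeta}\Phi(\zeta,P^{\theta})$) is a legitimate and in fact cleaner route than the paper's, and your observation that the exponents $2+\epsilon$ and $1+\tfrac{2}{\epsilon}$ are matched precisely so that $f\mapsto\mathbb{E}[f(x(t)-\zeta(t))^{2}]$ is weakly continuous is correct. But the proof has a genuine gap at the step you yourself flag as ``the main obstacle'': the weak (equivalently, by Mazur and concavity, norm-sequential) upper semicontinuity of the penalty $f^{\theta}(T)\mapsto\alpha_{0,t}(P^{\theta})=\mathbb{E}\bigl[f^{\theta}(T)\int_{0}^{t}G(s,\theta_{1}(s),\theta_{2}(s))\,ds\bigr]$ is asserted but not proved. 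Your sketch --- pass to an a.s.-convergent subsequence of the densities and ``control'' the $G$-integral by boundedness of $\theta$ --- does not close the argument: convergence of the terminal densities $f^{\theta_{n}}(T)$, in norm or a.s., gives by itself no mode of convergence of the generators $\theta_{n}$, and the integrand $G(s,\theta_{1}^{n}(s),\theta_{2}^{n}(s))$ depends on the whole process $\theta_{n}$, not on $f^{\theta_{n}}(T)$; boundedness of $G$ yields tightness of the values, not convergence to the right limit. Since both the Sion interchange and the attainment of the supremum hinge on exactly this semicontinuity, the theorem is not yet proved.

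For comparison, the paper does not attempt a general semicontinuity statement. It takes a maximizing sequence $\theta^{n}$ for the sup-inf, applies the Koml\'{o}s theorem to get Ces\`{a}ro averages $g_{m}(T)$ converging a.s.\ and (by the uniform $L^{K}$ bound from Girsanov's estimate) in $L^{1+2/\epsilon}$ to some $f^{\ast}(T)=dP^{\theta^{\ast}}/d\mathbb{P}$, and then --- this is the step your proposal is missing --- applies It\^{o}'s formula to $(g_{m}(t)-f^{\ast}(t))^{2}$ to convert $L^{2}$ convergence of the exponential martingales into $L^{2}(dt\times d\mathbb{P})$ convergence of the generators $\theta^{m}_{i}\to\theta^{\ast}_{i}$, which is what makes the penalty term pass to the limit along that particular sequence; concavity of $\alpha_{0,t}$ (Lemma \ref{convex penalty operator}) handles the Ces\`{a}ro averaging. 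Only after the maximizer $P^{\theta^{\ast}}$ is in hand does the paper invoke the minimax theorem to interchange inf and sup. An alternative way to fill your gap without the It\^{o} argument would be to identify $-\alpha_{0,t}$ with the minimal penalty function $\sup_{\zeta\in\mathcal{A}}E_{P^{\theta}}[\zeta]$, a supremum of weakly continuous linear functionals of the density and hence weakly lower semicontinuous --- this is exactly what the paper does in Lemma \ref{lsc} for general convex operators --- but some such argument must be supplied; as written, your proof stops at the point where the real work begins.
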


\begin{proof}
Firstly, we prove the first equality. According to Lemmas \ref{convex compact} and \ref{convex penalty operator}, the original robust estimation problem \eqref{convex robust problem} satisfies minimax theorem \ref{minmax}. Therefore, the first equality is verified.

Secondly, we prove the second equality. Choose a sequence $\{\theta^{n}\}$, $n=1,2,\cdot\cdot\cdot$ such that%
\begin{equation}\label{sup proposition}
\begin{array}
[c]{rl}
&\lim\limits_{n\rightarrow\infty}\inf\limits_{\zeta(t)\in L_{\mathcal{Z}_{t}}^{2+\epsilon
}(\Omega,\mathbb{P},\mathbb{R})}[E_{P^{\theta^{n}}}[({x}(t)-\zeta
(t))^{2}]+\alpha_{0,t}(P^{\theta^{n}})]\\
&=\sup\limits_{P^{\theta}\in\mathcal{P}}%
\inf\limits_{\zeta(t)\in L_{\mathcal{Z}_{t}}^{2+\epsilon}(\Omega,\mathbb{P}%
,\mathbb{R})}[\alpha_{0,t}(P^{\theta})+E_{P^{\theta}}[({x}(t)-\zeta(t))^{2}]].%
\end{array}
\end{equation}
Set $f^{\theta^{n}}(T)=\frac{dP^{\theta^{n}}}{d\mathbb{P}}$. By Koml\'{o}s
theorem A.3.4 in \cite{Pham}, there exists a subsequence $\{f^{\theta^{n_{k}}%
}(T)\}_{k\geq1}$ of $\{f^{\theta^{n}}(T)\}_{n\geq1}$ and a $f^{\ast}(T)\in
L^{1}(\Omega,\mathcal{F},\mathbb{P})$ such that
\begin{equation}
\lim_{m\rightarrow\infty}\dfrac{1}{m}\sum_{k=1}^{m}f^{\theta^{n_{k}}%
}(T)=f^{\ast}(T),\ \mathbb{P}-a.s.. \label{komlos result}%
\end{equation}
Let $g_{m}(T)=\dfrac{1}{m}\displaystyle\sum_{k=1}^{m}f^{\theta^{n_{k}}}(T)$.
We have $g_{m}(T)\xrightarrow{\mathbb{P}-a.s.}f^{\ast}(T)$. By Theorem
\ref{convex AP} in the Appendix, for any given constant $p>1$ and $m$, we have
$\ \mathbb{E}(g_{m}(T))^{K}\leq M$ \ where $K=(1+\frac{2}{\epsilon})p$ and
$M=\exp({(K^{2}-K)}\mu^{2}T)$. Then, we have $\left\{  |g_{m}(T)|^{1+\frac
{2}{\varepsilon}}:m=1,2,\cdot\cdot\cdot\right\} $ is uniformly integrable.
Therefore, it results in that $\ g_{m}%
(T)\xrightarrow{L^{1+\frac{2}{\epsilon}}(\Omega, \mathcal{F},\mathbb{P})}f^{\ast
}(T)$ and $f^{\ast}(T)\in L^{1+\frac{2}{\epsilon}}(\Omega,\mathcal{F}%
,\mathbb{P})$. According to the convexity and weak compactness of the set
$\{\frac{dP^{\theta}}{d\mathbb{P}}:P^{\theta}\in\mathcal{P}\}$, there exists a
$\theta^{\ast}$ such that $\frac{dP^{\theta^{\ast}}}{d\mathbb{P}}=f^{\ast}(T)$.

Then we prove that the probability measure $P^{\theta^{\ast}}$ with respect to
obtained generator $\theta^{\ast}$ satisfies (\ref{convex equivalent equation}%
). Based on \eqref{sup proposition} and \eqref{komlos result}, we have
\begin{equation}%
\begin{array}
[c]{rl}
& \sup\limits_{P^{\theta}\in\mathcal{P}}\inf\limits_{\zeta(t)\in
L_{\mathcal{Z}_{t}}^{2+\epsilon}(\Omega,\mathbb{P},\mathbb{R})}[E_{P^{\theta}%
}[({x}(t)-\zeta(t))^{2}]+\alpha_{0,t}(P^{\theta})]\\
= & \lim\limits_{n\rightarrow\infty}\inf\limits_{\zeta(t)\in L_{\mathcal{Z}%
_{t}}^{2+\epsilon}(\Omega,\mathbb{P},\mathbb{R})}[\mathbb{E}[f^{P^{\theta_{n}%
}}(T)({x}(t)-\zeta(t))^{2}]+\alpha_{0,t}(P^{\theta_{n}})]\\
= & \lim\limits_{k\rightarrow\infty}\inf\limits_{\zeta(t)\in L_{\mathcal{Z}%
_{t}}^{2+\epsilon}(\Omega,\mathbb{P},\mathbb{R})}[\mathbb{E}[f^{P^{\theta
_{n_{k}}}}(T)({x}(t)-\zeta(t))^{2}]+\alpha_{0,t}(P^{\theta_{n_{k}}})]\\
= & \lim\limits_{m\rightarrow\infty}\frac{1}{m}\displaystyle\sum_{k=1}^{m}%
\inf\limits_{\zeta(t)\in L_{\mathcal{Z}_{t}}^{2+\epsilon}(\Omega
,\mathbb{P},\mathbb{R})}[\mathbb{E}[f^{P^{\theta_{n_{k}}}}(T)({x}%
(t)-\zeta(t))^{2}]+\alpha_{0,t}(P^{\theta_{n_{k}}})]\\
\leq & \liminf\limits_{m\rightarrow\infty}\inf\limits_{\zeta(t)\in
L_{\mathcal{Z}_{t}}^{2+\epsilon}(\Omega,\mathbb{P},\mathbb{R})}\frac{1}%
{m}\displaystyle\sum_{k=1}^{m}[\mathbb{E}[f^{P^{\theta_{n_{k}}}}%
(T)({x}(t)-\zeta(t))^{2}]+\alpha_{0,t}(P^{\theta_{n_{k}}})]\\
\leq & \liminf\limits_{m\rightarrow\infty}\inf\limits_{\zeta(t)\in
L_{\mathcal{Z}_{t}}^{2+\epsilon}(\Omega,\mathbb{P},\mathbb{R})}[\mathbb{E}%
[g_{m}(T)({x}(t)-\zeta(t))^{2}]+\alpha_{0,t}(P^{\theta^{m}})]
\end{array}
\label{leq inequality}%
\end{equation}
where the last inequality is due to the concavity of $\alpha(\cdot)$. By \eqref{leq inequality} and Lemma \ref{theta convergence}, it results in that
\begin{equation}%
\begin{array}
[c]{rl}
& \sup\limits_{P^{\theta}\in\mathcal{P}}\inf\limits_{\zeta(t)\in
L_{\mathcal{Z}_{t}}^{2+\epsilon}(\Omega,\mathbb{P},\mathbb{R})}[E_{P^{\theta}%
}[({x}(t)-\zeta(t))^{2}]+\alpha_{0,t}(P^{\theta})]\\
& \geq\inf\limits_{\zeta(t)\in L_{\mathcal{Z}_{t}}^{2+\epsilon}(\Omega
,\mathbb{P},\mathbb{R})}[E_{P^{\theta^{\ast}}}[({x}(t)-\zeta(t))^{2}%
]+\alpha_{0,t}(P^{\theta^{\ast}})]\\
& =\inf\limits_{\zeta(t)\in L_{\mathcal{Z}_{t}}^{2+\epsilon}(\Omega
,\mathbb{P},\mathbb{R})}[\mathbb{E}[\lim\limits_{m\rightarrow\infty}%
g_{m}(T)({x}(t)-\zeta(t))^{2}]\\
&+\mathbb{E}[f^{\ast}(T)\int_{0}%
^{t}G(r,\theta_{1}^{\ast}(r),\theta_{2}^{\ast}(r))dr]]\\
& =\inf\limits_{\zeta(t)\in L_{\mathcal{Z}_{t}}^{2+\epsilon}(\Omega
,\mathbb{P},\mathbb{R})}[\mathbb{E}[\lim\limits_{m\rightarrow\infty}%
g_{m}(T)({x}(t)-\zeta(t))^{2}]\\
&+\mathbb{E}[\lim\limits_{m\rightarrow\infty
}(g_{m}(T)\int_{0}^{t}G(r,\theta_{1}^{m}(r),\theta_{2}^{m}(r))dr)]]\\
& \geq\limsup\limits_{m\rightarrow\infty}\inf\limits_{\zeta(t)\in
L_{\mathcal{Z}_{t}}^{2+\epsilon}(\Omega,\mathbb{P},\mathbb{R})}[\mathbb{E}%
[g_{m}(T)({x}(t)-\zeta(t))^{2}]\\
&+\mathbb{E}[g_{m}(T)\int_{0}^{t}%
G(r,\theta_{1}^{m}(r),\theta_{2}^{m}(r))dr]]\\
& {\geq\sup\limits_{P^{\theta}\in\mathcal{P}}\inf\limits_{\zeta(t)\in
L_{\mathcal{Z}_{t}}^{2+\epsilon}(\Omega,\mathbb{P},\mathbb{R})}[E_{P^{\theta}%
}[({x}(t)-\zeta(t))^{2}]+\alpha_{0,t}(P^{\theta})]}%
\end{array}
\label{geq inequality}%
\end{equation}
where the second inequality is based on the upper semi-continuous property.
Therefore,
\[
\begin{array}
[c]{rl}
&\sup\limits_{P^{\theta}\in\mathcal{P}}\inf\limits_{\zeta(t)\in L_{\mathcal{Z}_{t}%
}^{2+\epsilon}(\Omega,\mathbb{P},\mathbb{R})}[E_{P^{\theta}}[({x}%
(t)-\zeta(t))^{2}]+\alpha_{0,t}(P^{\theta})]\\
&=\inf\limits_{\zeta(t)\in L_{\mathcal{Z}%
_{t}}^{2+\epsilon}(\Omega,\mathbb{P},\mathbb{R})}[E_{P^{\theta^{\ast}}}%
[({x}(t)-\zeta(t))^{2}]+\alpha_{0,t}(P^{\theta^{\ast}})].
\end{array}
\]

By minimax theorem (Theorem \ref{minmax} in the Appendix), we obtain
\[
\begin{array}
[c]{rl}
&\sup\limits_{P^{\theta}\in\mathcal{P}}\inf\limits_{\zeta(t)\in L_{\mathcal{Z}_{t}%
}^{2+\epsilon}(\Omega,\mathbb{P},\mathbb{R})}[E_{P^{\theta}}[({x}%
(t)-\zeta(t))^{2}]+\alpha_{0,t}(P^{\theta})]\\
&=\inf\limits_{\zeta(t)\in L_{\mathcal{Z}%
_{t}}^{2+\epsilon}(\Omega,\mathbb{P},\mathbb{R})}\sup\limits_{P^{\theta}%
\in\mathcal{P}}[E_{P^{\theta}}[({x}(t)-\zeta(t))^{2}]+\alpha_{0,t}(P^{\theta
})]
\end{array}
\]
which implies that
\begin{align*}
&  \inf\limits_{\zeta(t)\in L_{\mathcal{Z}_{t}}^{2+\epsilon}(\Omega
,\mathbb{P},\mathbb{R})}\mathcal{E}_{g}[({x}(t)-\zeta(t))^{2}]\\
&  =\inf_{\zeta(t)\in L_{\mathcal{Z}_{t}}^{2+\epsilon}(\Omega,\mathbb{P}%
,\mathbb{R})}\sup_{P^{\theta}\in\mathcal{P}}[E_{P^{\theta}}[({x}%
(t)-\zeta(t))^{2}]+\alpha_{0,t}(P^{\theta})]\\
&  =\inf_{\zeta(t)\in L_{\mathcal{Z}_{t}}^{2+\epsilon}(\Omega,\mathbb{P}%
,\mathbb{R})}[E_{P^{\theta^{\ast}}}[({x}(t)-\zeta(t))^{2}]+\alpha
_{0,t}(P^{\theta^{\ast}})].
\end{align*}
This completes the proof.
\end{proof}

For the obtained $\theta^{\ast}(t)=(\theta_{1}^{\ast}(t),\theta_{2}^{\ast
}(t))$ in Theorem \ref{convex equivalent representation}, set $\widehat{\theta
_{i}^{\ast}(t)}=E_{P^{\theta^{\ast}}}[\theta_{i}^{\ast}(t)|\mathcal{Z}_{t}]$,
$i=1,2$.

\begin{theorem}
The MMSE $\hat{x}(t)$ of problem \eqref{convex robust problem} equals
$E_{P^{\theta^{\ast}}}[{x}(t)|\mathcal{Z}_{t}]$ and satisfies the following
equation:%
\begin{equation}
\left\{
\begin{array}
[c]{rl}%
d\hat{x}(t) & =(B(t)\hat{x}(t)+b(t)-\widehat{\theta_{1}^{\ast}(t)}%
)dt+(P(t)H(t)-\widehat{x(t)\theta_{2}^{\ast}(t)}\\
&+\hat{x}(t)\widehat{\theta
_{2}^{\ast}(t)})R(t)^{-1}d\hat{I}(t),\\
\hat{x}(0) & =x_{0},
\end{array}
\right.
\label{convex theta optimal solution}%
\end{equation}
where $\theta^{\ast}$ is obtained in Theorem
\ref{convex equivalent representation}, $\widehat{x(t)\theta_{2}^{\ast}%
(t)}:=E_{P^{\theta^{\ast}}}[x(t)\theta_{2}^{\ast}(t)|\mathcal{Z}_{t}]$ and the
so called innovation process $\hat{I}(t):={m}(t)-\int_{0}^{t}(H(s)\hat
{x}(s)+g(s)-\widehat{\theta_{2}^{\ast}(s)})ds$, $0\leq t\leq T$ is a
$\mathcal{Z}_{t}$-measurable Brownian motion. The variance of the estimation
error $P(t)=E_{P^{\theta^{\ast}}}[(x(t)-\hat{x}(t))^{2}]$ satisfies the
following equation:
\begin{equation}
\left\{
\begin{array}
[c]{rl}%
\frac{dP(t)}{dt}  =&-E_{P^{\theta^{\ast}}}[(P(t)H(t)-\widehat{x(t)\theta
_{2}^{\ast}(t)}+\hat{x}(t)\widehat{\theta_{2}^{\ast}(t)})R^{-1}%
(t)(H(t)P(t)-\widehat{\theta_{2}^{\ast}(t)x(t)}\\
&+\widehat{\theta_{2}^{\ast}%
(t)}\hat{x}(t))]+2E_{P^{\theta^{\ast}}}[-\widehat{x(t)\theta_{1}^{\ast}(t)}+\hat
{x}(t)\widehat{\theta_{1}^{\ast}(t)}]+2B(t)P(t)+Q(t),\\
P(0)  =&0.
\end{array}
\right.  \label{convex Riccati equation}%
\end{equation}

\end{theorem}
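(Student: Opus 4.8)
The plan is to establish the three assertions in turn: the identification $\hat x(t)=E_{P^{\theta^\ast}}[x(t)|\mathcal Z_t]$, the filtering equation \eqref{convex theta optimal solution}, and the error-variance equation \eqref{convex Riccati equation}. Throughout I work in the scalar case and with the \emph{fixed} worst-case measure $P^{\theta^\ast}$ supplied by Theorem \ref{convex equivalent representation}. For the first assertion, note that the penalty $\alpha_{0,t}(P^{\theta^\ast})$ is independent of $\zeta$, so by the last line of \eqref{convex equivalent equation} the minimisation collapses to $\inf_{\zeta}E_{P^{\theta^\ast}}[(x(t)-\zeta(t))^2]$ over $\zeta(t)\in L^{2+\epsilon}_{\mathcal Z_t}$. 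This is an $L^2(P^{\theta^\ast})$ projection onto the $\mathcal Z_t$-measurable random variables, whose unique minimiser is the conditional expectation $\hat x(t)=E_{P^{\theta^\ast}}[x(t)|\mathcal Z_t]$; that $\hat x(t)\in L^{2+\epsilon}_{\mathcal Z_t}$ follows from the $(2+\epsilon)$-integrability of $x(t)$ under $P^{\theta^\ast}$ (a consequence of the boundedness of $\theta^\ast$ and the integrability of $f^{\theta^\ast}(T)$ from Lemma \ref{convex compact} and Theorem \ref{convex AP}) together with conditional Jensen.

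For the filtering equation I would run the innovations (Fujisaki--Kallianpur--Kunita) argument under $P^{\theta^\ast}$, regarding \eqref{convex generalize K-B} as a nonlinear filtering problem driven by the $P^{\theta^\ast}$-Brownian motions $w^{\theta_1^\ast}$ and $v^{\theta_2^\ast}$; these have vanishing cross-variation and hence are independent, so no signal--observation correlation term appears. Setting $\hat I(t)=m(t)-\int_0^t(H(s)\hat x(s)+h(s)-\widehat{\theta_2^\ast(s)})\,ds$, one computes $d\hat I=(H(x-\hat x)-(\theta_2^\ast-\widehat{\theta_2^\ast}))\,dt+dv^{\theta_2^\ast}$, and conditioning on $\mathcal Z_t$ with $E_{P^{\theta^\ast}}[x-\hat x|\mathcal Z_t]=0$ and $E_{P^{\theta^\ast}}[\theta_2^\ast-\widehat{\theta_2^\ast}|\mathcal Z_t]=0$ kills the drift; thus $\hat I$ is a $\mathcal Z_t$-martingale with quadratic variation $R(t)\,dt$ and, by L\'evy's theorem, a $\mathcal Z_t$-Brownian motion of covariance $R$. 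Since $\hat x(t)-\int_0^t(B\hat x+b-\widehat{\theta_1^\ast})\,ds$ is a square-integrable $\mathcal Z_t$-martingale, the representation property with respect to $\hat I$ writes it as $\int_0^t K(s)R(s)^{-1}\,d\hat I(s)$, and the FKK identification of $K$ as the conditional cross-covariance $\widehat{x\beta}-\hat x\hat\beta$ with $\beta=Hx+h-\theta_2^\ast$ gives, after expansion, $K=P(t)H(t)-\widehat{x(t)\theta_2^\ast(t)}+\hat x(t)\,\widehat{\theta_2^\ast(t)}$, which is precisely \eqref{convex theta optimal solution}.

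For the variance equation I would put $e(t)=x(t)-\hat x(t)$, subtract \eqref{convex theta optimal solution} from \eqref{convex generalize K-B} to get $de=(Be-(\theta_1^\ast-\widehat{\theta_1^\ast}))\,dt+dw^{\theta_1^\ast}-K R^{-1}\,d\hat I$, apply It\^o to $e^2$, and take $E_{P^{\theta^\ast}}$. The stochastic integrals vanish, the quadratic variation contributes $Q+K^2R^{-1}$, and the remaining drift terms are simplified by the tower property together with $E_{P^{\theta^\ast}}[e|\mathcal Z_t]=0$ and the $\mathcal Z_t$-measurability of $K$; for instance $E_{P^{\theta^\ast}}[e(\theta_1^\ast-\widehat{\theta_1^\ast})]=E_{P^{\theta^\ast}}[\widehat{x\theta_1^\ast}-\hat x\,\widehat{\theta_1^\ast}]$, and the three $R^{-1}$-terms collapse to $-E_{P^{\theta^\ast}}[KR^{-1}K]$. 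Collecting everything reproduces \eqref{convex Riccati equation}.

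The principal obstacle is that the random worst-case drift $\theta^\ast$ destroys the Gaussianity underlying the classical Kalman--Bucy theory, so the linear derivation is unavailable and one must deploy the full nonlinear filtering apparatus: verifying that the innovation is a genuine $\mathcal Z_t$-Brownian motion, justifying the martingale representation with respect to it, and handling the conditional cross-moments $\widehat{x\theta_i^\ast}$, which do not close into a finite-dimensional system. The accompanying technical burden is integrability: every It\^o and conditioning step must be carried out in $L^{2+\epsilon}$ rather than merely $L^2$, which is exactly why the exponent $2+\epsilon$ and the uniform density bounds of Lemma \ref{convex compact} and Theorem \ref{convex AP} were arranged beforehand.
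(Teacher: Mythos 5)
Your proposal follows essentially the same route as the paper: fix the worst-case measure $P^{\theta^{\ast}}$ from Theorem \ref{convex equivalent representation}, observe that the penalty $\alpha_{0,t}(P^{\theta^{\ast}})$ is a constant so the problem reduces to the classical conditional-expectation projection under $P^{\theta^{\ast}}$, and then verify that $\hat{x}(t)=E_{P^{\theta^{\ast}}}[x(t)|\mathcal{Z}_{t}]$ lies in $L^{2+\epsilon}_{\mathcal{Z}_t}(\Omega,\mathbb{P})$. The only difference is that where you carry out the Fujisaki--Kallianpur--Kunita innovations derivation and the It\^{o} computation for the error variance explicitly, the paper simply invokes Theorem 8.1 of Liptser--Shiryaev for the filtering and variance equations (and Theorem 6.3 of Yong--Zhou for the integrability), so your sketch is in effect an unpacking of that citation rather than a different argument.
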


\begin{proof}
For the obtained optimal $\theta^{\ast}(t)=(\theta_{1}^{\ast}(t),\theta
_{2}^{\ast}(t))$ in Theorem \ref{convex equivalent representation}, the system
\eqref{convex generalize K-B} and problem \eqref{convex robust problem} can be
reformulated correspondingly under $P^{\theta^{\ast}}$. In more detail, on the
filtered probability space $(\Omega,\mathcal{F},\{\mathcal{F}_{t}\}_{0\leq
t\leq T},P^{\theta^{\ast}})$, the processes $x(\cdot)$ and $m(\cdot)$ satisfy
the following equations:
\begin{equation}
\left\{
\begin{array}
[c]{rl}%
{dx}(t) & =(B(t){x}(t)+b(t)-\theta_{1}^{\ast}(t))dt+dw^{\theta_{1}^{\ast}%
}(t),\\
{x}(0) & =x_{0},\\
{dm}(t) & =(H(t)x(t)+h(t)-\theta_{2}^{\ast}(t))dt+dv^{\theta_{2}^{\ast}}(t),\\
{m}(0) & =0.
\end{array}
\right.  \label{convex theta model}%
\end{equation}
We solve the minimum mean square estimation problem
\begin{equation}
\inf_{\zeta(t)\in L_{\mathcal{Z}_{t}}^{2+\epsilon}(\Omega,\mathbb{P}%
,\mathbb{R})}[E_{P^{\theta^{\ast}}}[({x}(t)-\zeta(t))^{2}]+\alpha
_{0,t}(P^{\theta^{\ast}})]. \label{convex theta problem}%
\end{equation}
Since $\alpha_{0,t}(P^{\theta^{\ast}})$ is a constant, we only need to
consider the following optimization problem:%
\begin{equation}
\inf_{\zeta(t)\in L_{\mathcal{Z}_{t}}^{2+\epsilon}(\Omega,\mathbb{P}%
,\mathbb{R})}[E_{P^{\theta^{\ast}}}[({x}(t)-\zeta(t))^{2}].
\label{convex auxiliary problem}%
\end{equation}

In \cite{Liptser}, Liptser and Shiryaev studied the optimal estimator of the
following problem:%
\begin{equation}
\inf_{\zeta(t)\in L_{\mathcal{Z}_{t}}^{2}(\Omega,{P}^{\theta^{\ast}%
},\mathbb{R})}E_{P^{\theta^{\ast}}}[({x}(t)-\zeta(t))^{2}].
\label{convex square problem}%
\end{equation}
By Theorem 8.1 in \cite{Liptser}, the optimal estimator $\hat{x}%
(t)=E_{P^{\theta^{\ast}}}[{x}(t)|\mathcal{Z}_{t}]$ satisfies
(\ref{convex theta optimal solution}). Since $B(t),\ H(t),\ b(t)\ $and$\\ h(t)$
are uniformly bounded, deterministic functions and $\theta^{\ast}$ is bounded,
by Theorem 6.3 (see Chapter 1 in \cite{Yong-Zhou}), the solution $\hat{x}(t)$
to \eqref{convex theta optimal solution} also belongs to $L_{\mathcal{Z}_{t}%
}^{2+\epsilon}(\Omega,\mathbb{P},\mathbb{R})$. It yields that $\hat{x}(t)$\ is
the optimal solution of problem \eqref{convex auxiliary problem} at time
$t\in\lbrack0,T]$. This completes the proof.
\end{proof}

\begin{corollary}
If $\theta^{\ast}(t)$ is adapted to $\mathcal{Z}_{t}$, then $\hat{x}(t)$
satisfies the following equation:
\begin{equation}
\left\{
\begin{array}
[c]{rl}%
d\hat{x}(t) & =(B(t)\hat{x}(t)+b(t)-\theta_{1}^{\ast}(t))dt+P(t)H(t)R(t)^{-1}%
d\hat{I}(t),\\
\hat{x}(0) & =x_{0},
\end{array}
\right.  \label{convex simpler optimal solution}%
\end{equation}
where $P(t)$ satisfies the following Riccati equation:%
\begin{equation}
\left\{
\begin{array}
[c]{rl}
& \frac{dP(t)}{dt}=B(t)P(t)+P(t)B(t)^{\intercal}-P(t)H(t)^{\intercal}%
R(t)^{-1}H(t)P(t)+Q(t),\\
& P(0)=0.
\end{array}
\right.  \label{Riccati}%
\end{equation}

\end{corollary}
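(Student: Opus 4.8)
The plan is to derive the two simplified equations \eqref{convex simpler optimal solution} and \eqref{Riccati} directly from the general filtering equation \eqref{convex theta optimal solution} and the variance equation \eqref{convex Riccati equation} already established, by exploiting the extra measurability hypothesis on $\theta^{\ast}$. The key observation is that once $\theta^{\ast}(t)=(\theta_{1}^{\ast}(t),\theta_{2}^{\ast}(t))$ is adapted to $\mathcal{Z}_{t}$, all the conditional expectations that constitute the correction terms collapse. First I would record two elementary identities: since $\theta_{i}^{\ast}(t)$ is $\mathcal{Z}_{t}$-measurable, we have $\widehat{\theta_{i}^{\ast}(t)}=E_{P^{\theta^{\ast}}}[\theta_{i}^{\ast}(t)|\mathcal{Z}_{t}]=\theta_{i}^{\ast}(t)$ for $i=1,2$; and by pulling the $\mathcal{Z}_{t}$-measurable factor out of the conditional expectation, $\widehat{x(t)\theta_{i}^{\ast}(t)}=E_{P^{\theta^{\ast}}}[x(t)\theta_{i}^{\ast}(t)|\mathcal{Z}_{t}]=\theta_{i}^{\ast}(t)\,\hat{x}(t)$.

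Next I would substitute these identities into \eqref{convex theta optimal solution}. The drift term $B(t)\hat{x}(t)+b(t)-\widehat{\theta_{1}^{\ast}(t)}$ becomes $B(t)\hat{x}(t)+b(t)-\theta_{1}^{\ast}(t)$, the observable drift correction. In the gain coefficient the two $\theta_{2}^{\ast}$ contributions cancel: $-\widehat{x(t)\theta_{2}^{\ast}(t)}+\hat{x}(t)\widehat{\theta_{2}^{\ast}(t)}=-\theta_{2}^{\ast}(t)\hat{x}(t)+\hat{x}(t)\theta_{2}^{\ast}(t)=0$, so the coefficient reduces to $P(t)H(t)R(t)^{-1}$, and the innovation $\hat{I}(t)$ keeps its form with $\widehat{\theta_{2}^{\ast}(s)}$ replaced by $\theta_{2}^{\ast}(s)$. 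This gives \eqref{convex simpler optimal solution} immediately.

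For the variance equation I would carry out the analogous substitution in \eqref{convex Riccati equation}. The same cancellation reduces the first bracket to $P(t)H(t)$ and its companion to $H(t)P(t)$, and since $P(t)$ is deterministic the outer expectation drops, leaving the quadratic term $-P(t)H(t)^{\intercal}R^{-1}(t)H(t)P(t)$. The penalty-type correction $2E_{P^{\theta^{\ast}}}[-\widehat{x(t)\theta_{1}^{\ast}(t)}+\hat{x}(t)\widehat{\theta_{1}^{\ast}(t)}]$ vanishes identically because $-\theta_{1}^{\ast}(t)\hat{x}(t)+\hat{x}(t)\theta_{1}^{\ast}(t)=0$. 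Writing the remaining linear part $2B(t)P(t)$ in its symmetric one-dimensional form $B(t)P(t)+P(t)B(t)^{\intercal}$ and adding $Q(t)$ recovers the classical Riccati equation \eqref{Riccati} with $P(0)=0$. The whole argument is essentially routine substitution; the only point requiring care is bookkeeping—tracking every conditional-expectation term and confirming that both the $\theta_{2}^{\ast}$ terms in the gain and the $\theta_{1}^{\ast}$ terms in the Riccati equation cancel—so that under $\mathcal{Z}_{t}$-adapted worst-case drift the filter reduces to the classical Kalman–Bucy form perturbed only by the observable correction $\theta_{1}^{\ast}(t)$.
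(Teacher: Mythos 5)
Your proposal is correct: the paper states this corollary without proof, treating it as an immediate consequence of the preceding theorem, and your argument is exactly the intended one — the $\mathcal{Z}_{t}$-measurability of $\theta^{\ast}$ gives $\widehat{\theta_{i}^{\ast}(t)}=\theta_{i}^{\ast}(t)$ and $\widehat{x(t)\theta_{i}^{\ast}(t)}=\theta_{i}^{\ast}(t)\hat{x}(t)$, so the correction terms in \eqref{convex theta optimal solution} and \eqref{convex Riccati equation} cancel and the classical Kalman--Bucy gain and Riccati equation are recovered. The substitutions and cancellations you track are all valid (pulling the bounded $\mathcal{Z}_{t}$-measurable factor out of the conditional expectation is justified by the integrability of $x(t)$), so nothing is missing.
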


Define
\[
A(t,s)=\exp^{\int_{s}^{t}(B(r)-P(r)H(r)^{2}R^{-1}(r))dr}.
\]
$\bar{x}(t)$ is governed by
\begin{equation}
\left\{
\begin{array}
[c]{ll}%
d\bar{x}(t) & =(B(t)\bar{x}(t)+b(t))dt+P(t)H(t)^{\intercal}R(t)^{-1}dI(t),\\
\bar{x}(0) & =x_{0},
\end{array}
\right.  \label{classical Kalman}%
\end{equation}
where
\[
I(t)=m(t)-\int_{0}^{t}(H(s)\bar{x}(s)+h(s))ds.
\]

\begin{corollary}
\label{decomposition} If the optimal $\theta^{\ast}(t)$ adapted to
subfiltration $\mathcal{Z}_{t}$, with equations \eqref{classical Kalman} and
\eqref{convex theta optimal solution}, then the optimal estimator $\hat{x}(t)$ for any time $t\in\lbrack0.T]$ can be expressed as
\begin{equation}
\hat{x}(t)=\bar{x}(t)+\int_{0}^{t}(P(s)H(s)R^{-1}(s)\theta_{2}^{\ast
}(s)-\theta_{1}^{\ast}(s))A(t,s)ds. \label{expression2}%
\end{equation}
where $\bar{x}(t)$ is defined by equation \eqref{classical Kalman}.
\end{corollary}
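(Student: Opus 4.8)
The plan is to analyze the difference process $\eta(t):=\hat{x}(t)-\bar{x}(t)$ and show that it satisfies a pathwise linear ordinary differential equation with zero initial value, which can then be integrated explicitly by the variation-of-constants formula, with $A(t,s)$ playing the role of the fundamental solution (integrating factor). This converts the identity \eqref{expression2} into a routine ODE computation.

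First I would subtract equation \eqref{classical Kalman} from the simplified filtering equation \eqref{convex simpler optimal solution}. Since $\theta^{\ast}$ is assumed adapted to $\mathcal{Z}_{t}$, we have $\widehat{\theta_{2}^{\ast}(t)}=\theta_{2}^{\ast}(t)$, so the innovation in \eqref{convex simpler optimal solution} reads $\hat{I}(t)=m(t)-\int_{0}^{t}(H(s)\hat{x}(s)+h(s)-\theta_{2}^{\ast}(s))ds$, while $I(t)=m(t)-\int_{0}^{t}(H(s)\bar{x}(s)+h(s))ds$. Forming $d\hat{I}(t)-dI(t)$, the two $dm(t)$ contributions cancel and one is left with the purely absolutely continuous expression $d\hat{I}(t)-dI(t)=\big(-H(t)\eta(t)+\theta_{2}^{\ast}(t)\big)dt$. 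Because the coefficient in front of the innovation is the same in both equations (in the one-dimensional convention $H^{\intercal}=H$, and $P(t)$ solves the same Riccati equation \eqref{Riccati} in each), subtracting the two dynamics yields
\[
d\eta(t)=\big[(B(t)-P(t)H(t)^{2}R(t)^{-1})\eta(t)+\big(P(t)H(t)R(t)^{-1}\theta_{2}^{\ast}(t)-\theta_{1}^{\ast}(t)\big)\big]dt,
\]
with $\eta(0)=\hat{x}(0)-\bar{x}(0)=x_{0}-x_{0}=0$. Note that this is deterministic in form along each sample path: no stochastic integral survives.

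Finally I would solve this scalar linear ODE. Writing $a(r):=B(r)-P(r)H(r)^{2}R^{-1}(r)$, the function $A(t,s)=\exp\big(\int_{s}^{t}a(r)dr\big)$ is exactly its fundamental solution, so the variation-of-constants formula with vanishing initial condition gives
\[
\eta(t)=\int_{0}^{t}A(t,s)\big(P(s)H(s)R^{-1}(s)\theta_{2}^{\ast}(s)-\theta_{1}^{\ast}(s)\big)ds.
\]
Adding $\bar{x}(t)$ to both sides then reproduces \eqref{expression2} verbatim.

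I expect the only delicate point to be justifying that the two innovation processes share the same diffusion coefficient so that the martingale ($dm$) terms cancel and the difference equation genuinely collapses to an ODE; this is precisely where the $\mathcal{Z}_{t}$-adaptedness of $\theta^{\ast}$ (giving $\widehat{\theta_{2}^{\ast}}=\theta_{2}^{\ast}$) and the one-dimensional reduction are used. Once this cancellation is in place, the rest is the standard integrating-factor argument and requires no further hypotheses beyond the boundedness already assumed on the coefficients.
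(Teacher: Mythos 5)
Your argument is correct: subtracting \eqref{classical Kalman} from \eqref{convex simpler optimal solution} (valid here since $\mathcal{Z}_t$-adaptedness gives $\widehat{\theta_2^{\ast}}=\theta_2^{\ast}$ and reduces the general equation \eqref{convex theta optimal solution} to the simpler one with the same Riccati solution $P$), noting that the $dm$ terms cancel in $d\hat I - dI$, and integrating the resulting pathwise linear ODE with fundamental solution $A(t,s)$ yields \eqref{expression2} exactly. The paper states this corollary without proof, and your variation-of-constants derivation is precisely the natural argument that fills that gap.
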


\begin{remark}
So far, we have only proved the existence of the optimal $\theta^{\ast}$ from the mathematical theory. Since the complexity of the problem is considered in this paper, it is still a problem to be solved how to calculate the optimal $\theta^{\ast}$. In the future, we plan to study the numerical solutions to the robust estimation \eqref{convex robust problem}.

\end{remark}

\section{MMSE under general convex operators {on }$L_{\mathcal{F}}^{p}(\mathbb{P})\label{section-general problem}$}

In section \ref{Kalman-Bucy filtering under uncertainty}, we boil down the
calculation of the Kalman-Bucy filter under uncertainty to solving a minimum
mean square estimation problem under the convex $g$-expectation. The
worst-case prior $P^{\theta^{\ast}}$ is obtained and the corresponding
filtering equation (\ref{convex theta optimal solution}) is deduced.

It is an interesting question whether there are similar results for general
convex operators. So in this section, we investigate the minimum mean square
estimation problem under general convex operators on $L_{\mathcal{F}}%
^{p}(\mathbb{P})$ and obtain the existence and uniqueness results of the MMSE.

\subsection{{General convex operators on }$L_{\mathcal{F}}^{p}(\mathbb{P})$}

For a given probability space $(\Omega,\mathcal{F},\mathbb{P})$, we denote the
set of all $\mathcal{F}$-measurable $p$-th power integrable random variables
by $L^{p}(\Omega,\mathcal{F},\mathbb{P})$. Sometimes we use $L_{\mathcal{F}%
}^{p}(\mathbb{P})$ for short. Let $\mathcal{C}$ be a sub $\sigma$-algebra of
$\mathcal{F}$. $L_{\mathcal{C}}^{p}(\mathbb{P})$ denotes the set of all the
$p$-th power integrable $\mathcal{C}$-measurable random variables. In this
paper, we only consider the case that $1<p\leq2$.

Let $\mathcal{M}$ denote the set of probability measures absolutely continuous
with respect to $\mathbb{P}$. For $P\in\mathcal{M}$, we will use $f^{P}$ to
denote the Radon-Nikodym derivative $\frac{dP}{d\mathbb{P}}$ and $E_{P}%
[\cdot]$ to denote the expectation under $P$. Especially, the expectation
under $\mathbb{P}$ is denoted as $\mathbb{E[\cdot]}$. For a sub $\sigma
$-algebra $\mathcal{C}$ of $\mathcal{F}$ and $P\in\mathcal{M}$, define
$f_{\mathcal{C}}^{P}=\mathbb{E}[f^{P}|\mathcal{C}]$.

\begin{definition}
\label{def2.1} A convex operator is an operator $\rho(\cdot):L_{\mathcal{F}%
}^{p}(\mathbb{P})\mapsto\mathbb{R}$ satisfying

(i) Monotonicity: for any $\xi_{1},$ $\xi_{2}\in L_{\mathcal{F}}%
^{p}(\mathbb{P})$, $\rho(\xi_{1})\geq\rho(\xi_{2})$ if $\xi_{1}\geq\xi_{2}$;

(ii) Constant invariance: $\rho(\xi+c)=\rho(\xi)+c$ for any $\xi\in
L_{\mathcal{F}}^{p}(\mathbb{P})$ and $c\in\mathbb{R}$;

(iii) Convexity: for any $\xi_{1},$ $\xi_{2}\in L_{\mathcal{F}}^{p}%
(\mathbb{P})$ and $\lambda\in\lbrack0,1]$, $\rho(\lambda\xi_{1}+(1-\lambda
)\xi_{2})\leq\lambda\rho(\xi_{1})+(1-\lambda)\rho(\xi_{2})$.
\end{definition}

\begin{definition}
A convex operator $\rho(\cdot)$ is called normalized if $\rho(0)=0$.
\end{definition}

\begin{remark}
In this paper, we will always assume the convex operator is normalized. Moreover, if we define $\rho^{\prime}(\xi)=\rho(-\xi)$, then $\rho^{\prime}(\cdot)$ is a convex risk measure on $L_{\mathcal{F}}^{p}(\mathbb{P})$.
\end{remark}

If $\rho(\cdot)$ is a convex operator, then by Proposition 2.10 and Theorem
2.11 in \cite{KR}, for any random variable $\xi\in L_{\mathcal{F}}%
^{p}(\mathbb{P})$, there exists a set $\mathcal{P}$ such that $\rho(\cdot)$
can be represented as
\[
\rho(\xi)=\sup_{P\in\mathcal{P}}[E_{P}[\xi]-\alpha(P)],
\]
where $\alpha(P):=\sup_{\zeta\in\mathcal{A}_{\rho}}E_{P}[\zeta]$,
$\mathcal{A}_{\rho}:=\{\zeta\in L_{\mathcal{F}}^{p}(\mathbb{P});$ $\rho
(\zeta)\leq0\}$ called acceptance set, $\mathcal{P:}=\{P\in\mathcal{M};$ $f^{P}\in L_{\mathcal{F}%
}^{q}(\mathbb{P}),$ $\alpha(P)<\infty\}$. Moreover, $\mathcal{D}:=\{f^{P};$
$P\in\mathcal{P}\}$ is norm-bounded in $L_{\mathcal{F}}^{q}(\mathbb{P})$ and
$\sigma(L_{\mathcal{F}}^{q}(\mathbb{P}),L_{\mathcal{F}}^{p}(\mathbb{P}%
))$-compact, where $\sigma(L_{\mathcal{F}}^{q}(\mathbb{P}),L_{\mathcal{F}}%
^{p}(\mathbb{P}))$ denotes the weak topology defined on $L_{\mathcal{F}}%
^{q}(\mathbb{P})$ and $\frac{1}{p}+\frac{1}{q}=1$. The set $\mathcal{P}$ is
called the representation set of $\rho(\cdot)$. Since $\alpha(\cdot)$ is a
convex function defined on $\mathcal{M}$,$\ \mathcal{P}$ is a convex set.
%%%%%%%%%%%%%%%%%%%%%%%%%%%%%%%%%%%%%%%%%%%%%%%%%%%%%%%%%%%%%%%%%%%%%%%%%%%%%%%%%%%%%%%%%%%%%%%%%%%%%%%%%%%%%%%%%%%%%%%%%%%%%%%%%%%%%%%%%%%%%%%%%%%%%%%%%%%%%%%%%%
\begin{remark}
Note that $\alpha(P)=\sup_{\zeta\in\mathcal{A}_{\rho}}E_{P}[\zeta]=\sup
_{\zeta\in\mathcal{A}_{\rho}}\mathbb{E}[f^{P}\zeta]$. By abuse of notation, we
sometimes write $\alpha(f^{P})$ instead of $\alpha(P)$.
\end{remark}

\begin{definition}
\label{def3.1} The set $\mathcal{P}$ is called stable if for any element
$P\in\mathcal{P}$ and any sub $\sigma$-algebra $\mathcal{C}$ of $\mathcal{F}$,
$\frac{f^{P}}{f_{\mathcal{C}}^{P}}$ still lies in the set $\mathcal{D}$.
\end{definition}

\begin{definition}
\label{def3.2} A convex operator $\rho(\cdot)$ is called stable, if its
representation set $\mathcal{P}$ is stable.
\end{definition}

\begin{definition}
A convex operator $\rho(\cdot)$ is called proper if all the elements in its
representation set $\mathcal{P}$ are equivalent to $\mathbb{P}$.
\end{definition}

For a given $\xi\in L_{\mathcal{F}}^{4p}(\mathbb{P})$, when we only know the
information $\mathcal{C}$, we want to find the minimum mean square estimator
of $\xi$ under the convex operator $\rho(\cdot)$. In more detail, we will
solve the following optimization problem:

%\begin{problem}
\textbf{Problem:}
For a given $\xi\in L_{\mathcal{F}}^{4p}(\mathbb{P})$, find a $\hat{\eta}\in
L_{\mathcal{C}}^{2p}(\mathbb{P})$ such that
\begin{equation}
\rho(\xi-\hat{\eta})^{2}=\inf_{\eta\in L_{\mathcal{C}}^{2p}(\mathbb{P})}%
\rho(\xi-\eta)^{2}. \label{problem}%
\end{equation}

%\end{problem}

The optimal solution $\hat{\eta}$ of (\ref{problem}) is called the minimum
mean square estimator and we will denote it by $\rho(\xi|\mathcal{C})$.
%%%%%%%%%%%%%%%%%%%%%%%%%%%%%%%%%%%%%%%%%%%%%%%%%%%%%%%%%%%%%%%%%%%%%%%%%%%%%%%%%%%%%%%%%%%%%%%%%%%%%%%%%%%%%%%%%%%%%%%%%%%%%%%%%%%%%%%%%%%%%%%%%%%%%%%%%%%%%%%%
\begin{remark}
If we set $\mathcal{C=Z}_{t}$ and $p=1+\frac{\epsilon}{2}$ with $\epsilon
\in(0,1)$, then $L_{\mathcal{C}}^{2p}(\mathbb{P})$ is just the space
$L_{\mathcal{Z}_{t}}^{2+\epsilon}(\Omega,\mathbb{P},\mathbb{R}^{n})$ in
subsection \ref{problem formulation--kalman-bucy}.
\end{remark}

\subsection{{Existence and uniqueness results}}

In this section, we study the existence and uniqueness of the minimum mean
square estimator for problem \ref{problem}. We first give the following assumption.

\begin{assumption}
\label{Asum-proper--stable} The convex operator $\rho(\cdot)$ is stable and proper.
\end{assumption}
\subsubsection{Existence}

\begin{lemma}
\label{lem-pre} For any given real number $\gamma\geq2$, if $\xi\in
L_{\mathcal{F}}^{\gamma p}(\mathbb{P})$, then we have $\sup\limits_{P\in\mathcal{P}%
}E_{P}[\xi^{\frac{\gamma p}{2}}]<\infty$.
\end{lemma}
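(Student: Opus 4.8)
The plan is to turn the supremum into an expectation against the reference measure $\mathbb{P}$ and then apply H\"{o}lder's inequality, exploiting the uniform $L^{q}$-bound on the density set $\mathcal{D}=\{f^{P}:P\in\mathcal{P}\}$ recorded in the dual representation of $\rho$. First I would write, for each $P\in\mathcal{P}$ with $f^{P}=\frac{dP}{d\mathbb{P}}$,
\[
E_{P}[\xi^{\frac{\gamma p}{2}}]=\mathbb{E}[f^{P}\,\xi^{\frac{\gamma p}{2}}].
\]
Since $\frac{1}{p}+\frac{1}{q}=1$, H\"{o}lder's inequality with conjugate exponents $q$ (on $f^{P}$) and $p$ (on $\xi^{\frac{\gamma p}{2}}$) gives
\[
\mathbb{E}[f^{P}\,\xi^{\frac{\gamma p}{2}}]\leq\|f^{P}\|_{q}\,\big(\mathbb{E}[|\xi|^{\frac{\gamma p^{2}}{2}}]\big)^{\frac{1}{p}}.
\]

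It then remains to bound the two factors uniformly in $P$. The density set $\mathcal{D}$ is norm-bounded in $L_{\mathcal{F}}^{q}(\mathbb{P})$, so $C:=\sup_{P\in\mathcal{P}}\|f^{P}\|_{q}<\infty$. For the second factor I would invoke the standing hypothesis $1<p\leq2$: then $\frac{p}{2}\leq1$, hence $\frac{\gamma p^{2}}{2}\leq\gamma p$, and since $\mathbb{P}$ is a probability measure the inclusion $L_{\mathcal{F}}^{\gamma p}(\mathbb{P})\subset L_{\mathcal{F}}^{\gamma p^{2}/2}(\mathbb{P})$ together with $\xi\in L_{\mathcal{F}}^{\gamma p}(\mathbb{P})$ yields $\mathbb{E}[|\xi|^{\frac{\gamma p^{2}}{2}}]<\infty$. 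Combining the two estimates gives
\[
\sup_{P\in\mathcal{P}}E_{P}[\xi^{\frac{\gamma p}{2}}]\leq C\,\big(\mathbb{E}[|\xi|^{\frac{\gamma p^{2}}{2}}]\big)^{\frac{1}{p}}<\infty,
\]
which is the assertion. (If $\xi$ is not presumed nonnegative one reads $\xi^{\frac{\gamma p}{2}}$ as $|\xi|^{\frac{\gamma p}{2}}$, which is the form in which the quantity is used in Problem \ref{def3.1}'s setting, where the argument is a square.)

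The only delicate point, and the single place where the hypotheses are genuinely used, is the exponent bookkeeping in the $\xi$-factor: pairing $f^{P}$ with $\xi^{\frac{\gamma p}{2}}$ in the conjugate exponent $p$ forces the power $\frac{\gamma p^{2}}{2}$ on $\xi$, and its finiteness is guaranteed precisely by $\frac{\gamma p^{2}}{2}\leq\gamma p$, i.e. by $p\leq2$. Everything else is a routine application of H\"{o}lder's inequality together with the already-established norm-boundedness of $\mathcal{D}$, so I do not anticipate any serious obstacle.
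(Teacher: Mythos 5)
Your proof is correct and follows essentially the same route as the paper: rewrite $E_{P}[\xi^{\frac{\gamma p}{2}}]$ as $\mathbb{E}[f^{P}\xi^{\frac{\gamma p}{2}}]$, apply H\"{o}lder with exponents $q$ and $p$, and use the uniform $L^{q}$-bound on $\mathcal{D}$ together with $p\leq2$ to control the $\xi$-factor. Your explicit exponent bookkeeping $(\mathbb{E}[|\xi|^{\gamma p^{2}/2}])^{1/p}$ is in fact slightly more careful than the paper's shorthand $(\Vert\xi\Vert_{L^{\gamma p}})^{\gamma}$ for the same quantity.
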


\begin{proof}
Since $\{f^{P};P\in\mathcal{P}\}$ is normed bounded in $L_{\mathcal{F}}%
^{q}(\mathbb{P})$ and $1<p\leq2$, we have
\[
\sup_{P\in\mathcal{P}}E_{P}[\xi^{\frac{\gamma p}{2}}]=\sup_{P\in\mathcal{P}%
}\mathbb{E}[f^{P}\xi^{\frac{\gamma p}{2}}]\leq\sup_{P\in\mathcal{P}}%
||f^{P}||_{L^{q}}||\xi^{\frac{\gamma p}{2}}||_{L^{p}}\leq\sup_{P\in
\mathcal{P}}||f^{P}||_{L^{q}}(||\xi||_{L^{\gamma p}})^{\gamma}<\infty.
\]
This completes the proof.
\end{proof}

\begin{lemma}
\label{pro-stable} Suppose that Assumption \ref{Asum-proper--stable} holds.
Then for any $P\in\mathcal{P}$, $\xi\in L_{\mathcal{F}}^{p}(\mathbb{P})$ and
sub $\sigma$-algebra $\mathcal{C}$ of $\mathcal{F}$, there exists a $\bar
{P}\in\mathcal{P}$ such that $E_{\bar{P}}[\xi]=\mathbb{E}[E_{P}[\xi
|\mathcal{C}]]$.
\end{lemma}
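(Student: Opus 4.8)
The plan is to produce the measure $\bar{P}$ explicitly by guessing that its Radon-Nikodym derivative ought to be $f^{\bar{P}}=f^{P}/f_{\mathcal{C}}^{P}$, and then to read off both membership in $\mathcal{P}$ and the claimed identity from the definition of $\mathcal{D}$ and the tower property for conditional expectations. The quantity $\mathbb{E}[E_{P}[\xi|\mathcal{C}]]$ on the right-hand side is what suggests this guess: rewriting the $P$-conditional expectation in terms of $\mathbb{P}$ via $E_{P}[\xi|\mathcal{C}]=\mathbb{E}[f^{P}\xi|\mathcal{C}]/f_{\mathcal{C}}^{P}$ already exhibits the factor $1/f_{\mathcal{C}}^{P}$ that must appear in the density of $\bar{P}$.

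First I would check that $g:=f^{P}/f_{\mathcal{C}}^{P}$ is a legitimate density. Since $\rho(\cdot)$ is proper, $P$ is equivalent to $\mathbb{P}$, so $f^{P}>0$ $\mathbb{P}$-a.s., whence $f_{\mathcal{C}}^{P}=\mathbb{E}[f^{P}|\mathcal{C}]>0$ $\mathbb{P}$-a.s.; thus $g$ is well defined and nonnegative. Conditioning on $\mathcal{C}$ and using that $f_{\mathcal{C}}^{P}$ is $\mathcal{C}$-measurable gives $\mathbb{E}[g]=\mathbb{E}[\mathbb{E}[f^{P}|\mathcal{C}]/f_{\mathcal{C}}^{P}]=\mathbb{E}[1]=1$, so $g$ determines a probability measure $\bar{P}\ll\mathbb{P}$. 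The stability of $\rho(\cdot)$ (Definition \ref{def3.1}) asserts precisely that $g=f^{P}/f_{\mathcal{C}}^{P}\in\mathcal{D}$, and since $\mathcal{D}=\{f^{P}:P\in\mathcal{P}\}$ this places $g$ as the density of some $\bar{P}\in\mathcal{P}$.

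With $\bar{P}$ in hand, the identity is a one-line computation. I would write $E_{\bar{P}}[\xi]=\mathbb{E}[g\,\xi]=\mathbb{E}[f^{P}\xi/f_{\mathcal{C}}^{P}]$, then insert a conditional expectation given $\mathcal{C}$ and pull the $\mathcal{C}$-measurable factor $1/f_{\mathcal{C}}^{P}$ inside, obtaining $E_{\bar{P}}[\xi]=\mathbb{E}[\mathbb{E}[f^{P}\xi|\mathcal{C}]/f_{\mathcal{C}}^{P}]$. Recognizing $\mathbb{E}[f^{P}\xi|\mathcal{C}]/f_{\mathcal{C}}^{P}=E_{P}[\xi|\mathcal{C}]$ then yields $E_{\bar{P}}[\xi]=\mathbb{E}[E_{P}[\xi|\mathcal{C}]]$, which is the assertion.

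The steps are routine; the only places that genuinely invoke the hypotheses are properness, which guarantees $f_{\mathcal{C}}^{P}>0$ so that the quotient is well defined, and stability, which is exactly what certifies $g\in\mathcal{D}$ and hence $\bar{P}\in\mathcal{P}$. The one technical point to verify along the way is integrability: $E_{\bar{P}}[\xi]$ must be finite, and this follows from H\"{o}lder's inequality since $g\in\mathcal{D}\subset L_{\mathcal{F}}^{q}(\mathbb{P})$ while $\xi\in L_{\mathcal{F}}^{p}(\mathbb{P})$ with $\frac{1}{p}+\frac{1}{q}=1$. I do not expect an obstacle beyond this bookkeeping; the real content is the guess $f^{\bar{P}}=f^{P}/f_{\mathcal{C}}^{P}$, after which the stability assumption does all the work.
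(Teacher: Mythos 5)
Your proposal is correct and follows essentially the same route as the paper: both identify $f^{\bar{P}}=f^{P}/f_{\mathcal{C}}^{P}$, invoke the stability of $\mathcal{P}$ (Definition \ref{def3.1}) to place it in $\mathcal{D}$, and conclude via the Bayes formula $E_{P}[\xi|\mathcal{C}]=\mathbb{E}[\xi f^{P}|\mathcal{C}]/f_{\mathcal{C}}^{P}$ together with the tower property. Your added checks on positivity, normalization, and integrability are sound refinements of the same argument.
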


\begin{proof}
It is obvious that
\[
\mathbb{E}[E_{P}[\xi|\mathcal{C}]]=\mathbb{E}[\frac{\mathbb{E}[\xi
f^{P}|\mathcal{C}]}{\mathbb{E}[f^{P}|\mathcal{C}]}]=\mathbb{E}[\mathbb{E}%
[\xi\frac{f^{P}}{f_{\mathcal{C}}^{P}}|{\mathcal{C}}]]=\mathbb{E}[\xi
\frac{f^{P}}{f_{\mathcal{C}}^{P}}].
\]
By Definition \ref{def3.1}, there exists a $\bar{P}\in\mathcal{P}$ such that
$\frac{d\bar{P}}{d\mathbb{P}}=\frac{f^{P}}{f_{\mathcal{C}}^{P}}$ which implies
that $E_{\bar{P}}[\xi]=\mathbb{E}[E_{P}[\xi|\mathcal{C}]]$. This completes the proof.
\end{proof}

\begin{proposition}
\label{pro-restrict} Suppose that Assumption \ref{Asum-proper--stable} holds.
If $\xi\in L_{\mathcal{F}}^{4p}(\mathbb{P})$, then there exists a constant $M$
such that for any probability measure $P\in\mathcal{P,}$
\[
\inf_{\eta\in L_{\mathcal{C}}^{2p}(\mathbb{P})}[E_{P}[(\xi-\eta)^{2}%
]-\alpha(P)]=\inf_{\eta\in L_{\mathcal{C}}^{2p,M}(\mathbb{P})}[E_{P}[(\xi
-\eta)^{2}]-\alpha(P)],
\]
where $L_{\mathcal{C}}^{2p,M}(\mathbb{P})$ denotes all the elements in
$L_{\mathcal{C}}^{2p}(\mathbb{P})$ which are norm-bounded by the constant $M$.
\end{proposition}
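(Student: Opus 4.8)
The plan is to exploit that $\alpha(P)$ does not depend on $\eta$, so it cancels from both sides; it therefore suffices to produce a single constant $M$, independent of $P$, for which $\inf_{\eta\in L_{\mathcal{C}}^{2p}(\mathbb{P})}E_{P}[(\xi-\eta)^{2}]=\inf_{\eta\in L_{\mathcal{C}}^{2p,M}(\mathbb{P})}E_{P}[(\xi-\eta)^{2}]$ for every $P\in\mathcal{P}$. The idea is that the inner minimization is, for each fixed $P$, an $L^{2}(P)$-projection whose minimizer is the conditional expectation $\eta_{P}^{\ast}:=E_{P}[\xi\mid\mathcal{C}]$, and the whole content of the proposition is that these minimizers stay in a fixed $L^{2p}(\mathbb{P})$-ball as $P$ ranges over $\mathcal{P}$.

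First I would record that $\xi\in L^{2}(P)$ for every $P\in\mathcal{P}$: since $E_{P}[\xi^{2}]=\mathbb{E}[f^{P}\xi^{2}]\le\|f^{P}\|_{L^{q}}\|\xi\|_{L^{2p}}^{2}$ and $\xi\in L^{4p}\subset L^{2p}$, this is finite, and the same H\"{o}lder estimate shows every $\eta\in L_{\mathcal{C}}^{2p}(\mathbb{P})$ lies in $L^{2}(P)$. Properness (Assumption \ref{Asum-proper--stable}) guarantees $f^{P}>0$, hence $f_{\mathcal{C}}^{P}>0$ and $\eta_{P}^{\ast}$ is well defined. By orthogonality of the conditional expectation, $E_{P}[(\xi-\eta)^{2}]=E_{P}[(\xi-\eta_{P}^{\ast})^{2}]+E_{P}[(\eta_{P}^{\ast}-\eta)^{2}]\ge E_{P}[(\xi-\eta_{P}^{\ast})^{2}]$ for every $\mathcal{C}$-measurable $\eta\in L^{2}(P)$ (the cross term vanishes after conditioning on $\mathcal{C}$), so $\eta_{P}^{\ast}$ minimizes the inner problem over all of $L_{\mathcal{C}}^{2p}(\mathbb{P})$ once I have checked $\eta_{P}^{\ast}\in L_{\mathcal{C}}^{2p}(\mathbb{P})$.

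The heart of the argument is the uniform bound on $\|\eta_{P}^{\ast}\|_{L^{2p}(\mathbb{P})}$. Here I would apply conditional Jensen to the convex map $x\mapsto|x|^{2p}$ to obtain $|\eta_{P}^{\ast}|^{2p}=|E_{P}[\xi\mid\mathcal{C}]|^{2p}\le E_{P}[\,|\xi|^{2p}\mid\mathcal{C}]$, and then take $\mathbb{P}$-expectations. Because $p\le 2$ we have $2p^{2}\le 4p$, so $|\xi|^{2p}\in L^{p}(\mathbb{P})$ and the stability hypothesis (Definition \ref{def3.1}, as packaged in Lemma \ref{pro-stable}) applies with integrand $|\xi|^{2p}$: it yields a $\bar{P}\in\mathcal{P}$ with $\mathbb{E}[E_{P}[\,|\xi|^{2p}\mid\mathcal{C}]]=E_{\bar{P}}[\,|\xi|^{2p}]$. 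Taking the supremum over $P$ and invoking Lemma \ref{lem-pre} with $\gamma=4$ (valid since $\xi\in L^{4p}$ and $\gamma\ge 2$) gives $\sup_{P\in\mathcal{P}}\mathbb{E}[\,|\eta_{P}^{\ast}|^{2p}]\le\sup_{P'\in\mathcal{P}}E_{P'}[\,|\xi|^{2p}]=:C<\infty$. In particular each $\eta_{P}^{\ast}$ lies in $L_{\mathcal{C}}^{2p}(\mathbb{P})$, which closes the minimization claim of the previous paragraph.

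Finally I would set $M:=C^{1/(2p)}$, which depends only on $\xi$, $p$ and the norm bound on $\mathcal{D}$, not on $P$. Then $\eta_{P}^{\ast}\in L_{\mathcal{C}}^{2p,M}(\mathbb{P})$ for every $P\in\mathcal{P}$, so the minimizer of the inner problem is feasible for the restricted infimum, forcing the two infima of $E_{P}[(\xi-\eta)^{2}]$ to coincide; restoring the $\eta$-independent constant $-\alpha(P)$ on both sides then gives the stated identity. The main obstacle is precisely the uniformity of $M$ in $P$: the subtlety is that the conditioning defining $\eta_{P}^{\ast}$ is with respect to $P$, and the only way to convert the resulting $P$-conditional moment into a quantity controlled uniformly over $\mathcal{P}$ is to route it through stability, which turns $\mathbb{E}[E_{P}[\,\cdot\mid\mathcal{C}]]$ into an expectation under another prior in $\mathcal{P}$. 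The exponent bookkeeping ($\xi\in L^{4p}$ together with $p\le 2$) is exactly what makes conditional Jensen, the applicability of Lemma \ref{pro-stable}, and Lemma \ref{lem-pre} simultaneously available.
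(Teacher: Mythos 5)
Your proof is correct and follows essentially the same route as the paper: identify the inner minimizer as $E_{P}[\xi|\mathcal{C}]$ via the projection property, bound its $L^{2p}(\mathbb{P})$-norm uniformly in $P$ by combining conditional Jensen with Lemma \ref{pro-stable} (stability) and Lemma \ref{lem-pre} with $\gamma=4$, and take $M$ to be the resulting $2p$-th root. Your additional bookkeeping (checking $|\xi|^{2p}\in L^{p}(\mathbb{P})$ via $2p^{2}\le 4p$, and the $L^{2}(P)$-integrability needed for the orthogonal projection) only makes explicit steps the paper leaves implicit.
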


\begin{proof}
Set $\mathbb{G}=\{E_{P}[\xi|\mathcal{C}];P\in\mathcal{P}\}$. For any
$P\in\mathcal{P}$, we have $\mathbb{E}[(E_{P}[\xi|\mathcal{C}])^{2p}%
]\leq\mathbb{E}[E_{P}[\xi^{2p}|\mathcal{C}]]$. By Lemma \ref{pro-stable},
there exists a $\bar{P}\in\mathcal{P}$ such that $E_{\bar{P}}[\xi
^{2p}]=\mathbb{E}[E_{P}[\xi^{2p}|\mathcal{C}]]$. By Lemma \ref{lem-pre}, there
exists a constant $M_{1}$ such that $\sup_{P\in\mathcal{P}}E_{P}[\xi^{2p}]\leq
M_{1}$. Then $\mathbb{G}\subset L_{\mathcal{C}}^{2p,M}(\mathbb{P})$ where
$M=M_{1}^{\frac{1}{2p}}$. Since $1<p\leq2$, it is obvious that
\[
\mathbb{G}\subset L_{\mathcal{C}}^{2p}(\mathbb{P})\subset\left(
\underset{0<\epsilon\leq2}{\bigcup}L_{\mathcal{C}}^{2+\epsilon}(\mathbb{P}%
)\right)  .
\]
By the project property of conditional expectations, for any $P\in\mathcal{P}$
and $\eta\in L_{\mathcal{C}}^{2+\epsilon}(\mathbb{P})$ with $\epsilon\in
(0,2]$, we have that
\[
E_{P}[(\xi-E_{P}[\xi|\mathcal{C}])^{2}]\leq E_{P}[(\xi-\eta)^{2}]
\]
which leads to%
\[
\inf_{\eta\in L_{\mathcal{C}}^{2p}(\mathbb{P})}[E_{P}[(\xi-\eta)^{2}%
]-\alpha(P)]\geq\inf_{\eta^{\prime}\in\mathbb{G}}[E_{P}[(\xi-\eta^{\prime
})^{2}]-\alpha(P)].
\]
On the other hand, the inverse inequality is obviously true. Then the
following equality holds for any $P\in\mathcal{P}$:
\[
\inf_{\eta\in L_{\mathcal{C}}^{2p}(\mathbb{P})}[E_{P}[(\xi-\eta)^{2}%
]-\alpha(P)]=\inf_{\eta\in\mathbb{G}}[E_{P}[(\xi-\eta)^{2}]-\alpha(P)].
\]
Since $\mathbb{G}\subset L_{\mathcal{C}}^{2p,M}(\mathbb{P})\subset
L_{\mathcal{C}}^{2p}(\mathbb{P})$, it follows that
\[
\inf_{\eta\in L_{\mathcal{C}}^{2p}(\mathbb{P})}[E_{P}[(\xi-\eta)^{2}%
]-\alpha(P)]=\inf_{\eta\in L_{\mathcal{C}}^{2p,M}(\mathbb{P})}[E_{P}[(\xi
-\eta)^{2}]-\alpha(P)].
\]
This completes the proof.
\end{proof}

By Proposition \ref{pro-restrict}, it is easy to see that%
\[
\sup_{P\in\mathcal{P}}\inf_{\eta\in L_{\mathcal{C}}^{2p}(\mathbb{P})}%
[E_{P}[(\xi-\eta)^{2}]-\alpha(P)]=\sup_{P\in\mathcal{P}}\inf_{\eta\in
L_{\mathcal{C}}^{2p,M}(\mathbb{P})}[E_{P}[(\xi-\eta)^{2}]-\alpha(P)].
\]

\begin{lemma}
\label{lsc} $\alpha(\cdot)$ is a lower semi-continuous (l.s.c.) function on
the topology space $(\mathcal{D},\sigma(L_{\mathcal{F}}^{q}(\mathbb{P}%
),L_{\mathcal{F}}^{p}(\mathbb{P})))$.
\end{lemma}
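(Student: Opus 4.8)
The plan is to exploit the variational representation of the penalty function and the elementary fact that a pointwise supremum of continuous functions is lower semi-continuous. Recall from the dual representation that, writing $\alpha(f^{P})$ for $\alpha(P)$ as permitted by the remark following the representation, we have
\[
\alpha(f^{P})=\sup_{\zeta\in\mathcal{A}_{\rho}}E_{P}[\zeta]=\sup_{\zeta\in\mathcal{A}_{\rho}}\mathbb{E}[f^{P}\zeta],
\]
where $\mathcal{A}_{\rho}=\{\zeta\in L_{\mathcal{F}}^{p}(\mathbb{P});\ \rho(\zeta)\leq0\}\subset L_{\mathcal{F}}^{p}(\mathbb{P})$. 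Thus $\alpha$, viewed as a function on $\mathcal{D}\subset L_{\mathcal{F}}^{q}(\mathbb{P})$, is expressed as a supremum over $\zeta\in\mathcal{A}_{\rho}$ of the affine maps $f\mapsto\mathbb{E}[f\zeta]$.

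First I would fix an arbitrary $\zeta\in\mathcal{A}_{\rho}$ and argue that the linear functional $\ell_{\zeta}:f\mapsto\mathbb{E}[f\zeta]$ is continuous on the topological space $(\mathcal{D},\sigma(L_{\mathcal{F}}^{q}(\mathbb{P}),L_{\mathcal{F}}^{p}(\mathbb{P})))$. This is immediate from the definition of the weak topology $\sigma(L_{\mathcal{F}}^{q}(\mathbb{P}),L_{\mathcal{F}}^{p}(\mathbb{P}))$: since $\zeta\in L_{\mathcal{F}}^{p}(\mathbb{P})=(L_{\mathcal{F}}^{q}(\mathbb{P}))^{\ast}$ (because $\frac{1}{p}+\frac{1}{q}=1$), the map $\ell_{\zeta}$ is precisely one of the evaluation functionals with respect to which this topology is the coarsest topology making them continuous. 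Hence every $\ell_{\zeta}$ is $\sigma(L_{\mathcal{F}}^{q}(\mathbb{P}),L_{\mathcal{F}}^{p}(\mathbb{P}))$-continuous on $\mathcal{D}$, and in particular lower semi-continuous there.

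Next I would invoke the standard topological fact that the pointwise supremum of an arbitrary family of lower semi-continuous functions is again lower semi-continuous. Concretely, for each real $c$ the sublevel set $\{f\in\mathcal{D}:\alpha(f)\leq c\}=\bigcap_{\zeta\in\mathcal{A}_{\rho}}\{f\in\mathcal{D}:\ell_{\zeta}(f)\leq c\}$ is an intersection of closed sets (each $\ell_{\zeta}$ being continuous makes $\{\ell_{\zeta}\leq c\}$ closed), hence closed; equivalently, the superlevel sets $\{\alpha>c\}$ are open. This establishes that $\alpha=\sup_{\zeta\in\mathcal{A}_{\rho}}\ell_{\zeta}$ is lower semi-continuous on $(\mathcal{D},\sigma(L_{\mathcal{F}}^{q}(\mathbb{P}),L_{\mathcal{F}}^{p}(\mathbb{P})))$, which is the claim.

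I do not anticipate a serious obstacle here; the only point requiring care is confirming that each test element $\zeta$ genuinely lies in $L_{\mathcal{F}}^{p}(\mathbb{P})$ so that $\ell_{\zeta}$ is weakly continuous, which is guaranteed by the defining inclusion $\mathcal{A}_{\rho}\subset L_{\mathcal{F}}^{p}(\mathbb{P})$. Everything else is the routine passage from continuity of the individual functionals to lower semi-continuity of their supremum.
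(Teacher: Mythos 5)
Your proposal is correct and follows essentially the same route as the paper: both identify $\alpha=\sup_{\zeta\in\mathcal{A}_{\rho}}\mathbb{E}[\,\cdot\,\zeta]$ as a pointwise supremum of functionals that are continuous for $\sigma(L_{\mathcal{F}}^{q}(\mathbb{P}),L_{\mathcal{F}}^{p}(\mathbb{P}))$ by the very definition of that weak topology, and then conclude lower semi-continuity of the supremum. Your explicit verification via closed sublevel sets is just a spelled-out version of the standard fact the paper cites from Pham.
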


\begin{proof}
For any fixed random variable $\zeta\in\mathcal{A}_{\rho}$, define
$\varphi(\zeta,f^{P})=\mathbb{E}[f^{P}\zeta]$ where $f^{P}$ belongs to
$\mathcal{D}$. Then $\varphi(\zeta,\cdot)$ is a continuous function on the
topology space $(\mathcal{D},\sigma(L_{\mathcal{F}}^{q}(\mathbb{P}%
)\\ , L_{\mathcal{F}}^{p}(\mathbb{P})))$. Since $\alpha(f^{P})=\sup_{\zeta
\in\mathcal{A}_{\rho}}\varphi(\zeta,f^{P})$, based on lower-semicontinuous
definition B.1.1 in Pham \cite{Pham}, then $\alpha(P)$ is a l.s.c. function on
the topology space $(\mathcal{D},\sigma(L_{\mathcal{F}}^{q}(\mathbb{P}%
),L_{\mathcal{F}}^{p}(\mathbb{P})))$. This completes the proof.
\end{proof}

For $\xi\in L_{\mathcal{F}}^{4p}(\mathbb{P})$, $\eta\in L_{\mathcal{C}}%
^{2p}(\mathbb{P})$ and $P\in\mathcal{P}$, define%
\[
l(\xi,\eta,f^{P})=\mathbb{E}[f^{P}(\xi-\eta)^{2}]-\alpha(f^{P})\text{.}%
\]

\begin{lemma}
\label{lem-lsc-usc} For any random variables $\xi\in L_{\mathcal{F}}%
^{4p}(\mathbb{P})$ and $\eta\in L_{\mathcal{C}}^{2p}(\mathbb{P})$, $l(\xi
,\eta,\cdot)$ is an upper semi-continuous (u.s.c.) function on the topology
space $(\mathcal{D},\sigma(L_{\mathcal{F}}^{q}(\mathbb{P}),L_{\mathcal{F}}%
^{p}(\mathbb{P})))$.
\end{lemma}

\begin{proof}
Since $\xi\in L_{\mathcal{F}}^{4p}(\mathbb{P})$ and $\eta\in L_{\mathcal{C}%
}^{2p}(\mathbb{P})$, then $(\xi-\eta)^{2}\in L_{\mathcal{F}}^{p}(\mathbb{P})$
which implies that $\mathbb{E}[f^{P}(\xi-\eta)^{2}]$ is a continuous function
with respect to $f^{P}$ on the topology space $(\mathcal{D},\sigma
(L_{\mathcal{F}}^{q}(\mathbb{P}),L_{\mathcal{F}}^{p}(\mathbb{P})))$. By Lemma
\ref{lsc}, $\alpha(\cdot)$ is a l.s.c. function on the topology space
$(\mathcal{D},\sigma(L_{\mathcal{F}}^{q}(\mathbb{P}),L_{\mathcal{F}}%
^{p}(\mathbb{P})))$. Thus, $l(\xi,\eta,\cdot)$ is an u.s.c. function on the
topology space $(\mathcal{D},\sigma(L_{\mathcal{F}}^{q}(\mathbb{P}%
),L_{\mathcal{F}}^{p}(\mathbb{P})))$. This completes the proof.
\end{proof}

\begin{proposition}
\label{P exist} Suppose that Assumption \ref{Asum-proper--stable} holds. Then
for a given $\xi\in L_{\mathcal{F}}^{4p}(\Omega,\mathbb{P})$, there exists a
$\hat{P}\in\mathcal{P}$ such that%
\[
\inf_{\eta\in L_{\mathcal{C}}^{2p,M}(\mathbb{P})}[E_{\hat{P}}[(\xi-\eta
)^{2}]-\alpha(\hat{P})]=\sup_{P\in\mathcal{P}}\inf_{\eta\in L_{\mathcal{C}%
}^{2p,M}(\mathbb{P})}[E_{P}[(\xi-\eta)^{2}]-\alpha(P)],
\]
where $M$ is the constant given in Proposition \ref{pro-restrict}.
\end{proposition}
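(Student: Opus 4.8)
The plan is to realize the supremum over $\mathcal{P}$ as a maximum by exploiting the topological structure already established. The objective functional under the supremum is
\[
\Phi(P):=\inf_{\eta\in L_{\mathcal{C}}^{2p,M}(\mathbb{P})}[E_{P}[(\xi-\eta)^{2}]-\alpha(P)]=\inf_{\eta\in L_{\mathcal{C}}^{2p,M}(\mathbb{P})}l(\xi,\eta,f^{P}),
\]
an infimum of the affine-in-$f^{P}$ maps $l(\xi,\eta,\cdot)$. The strategy is to show $\Phi$ is upper semi-continuous on the $\sigma(L_{\mathcal{F}}^{q},L_{\mathcal{F}}^{p})$-compact set $\mathcal{D}$ and then invoke the Weierstrass theorem: an u.s.c. function on a compact set attains its supremum. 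The point $f^{\hat{P}}$ at which the maximum is attained gives the desired $\hat{P}\in\mathcal{P}$.

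First I would record that $\mathcal{D}$ is $\sigma(L_{\mathcal{F}}^{q}(\mathbb{P}),L_{\mathcal{F}}^{p}(\mathbb{P}))$-compact, which is part of the dual representation recalled just before Definition \ref{def3.1}. Next, for each fixed admissible $\eta$, Corollary \ref{lem-lsc-usc} tells us that $l(\xi,\eta,\cdot)$ is u.s.c. on $\mathcal{D}$. The key structural fact I would then use is that an infimum of a family of u.s.c. functions is again u.s.c.: since $\Phi(\cdot)=\inf_{\eta}l(\xi,\eta,\cdot)$ and each $l(\xi,\eta,\cdot)$ is u.s.c., the pointwise infimum $\Phi$ is u.s.c. on the topology space $(\mathcal{D},\sigma(L_{\mathcal{F}}^{q}(\mathbb{P}),L_{\mathcal{F}}^{p}(\mathbb{P})))$. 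With $\Phi$ u.s.c. on the compact $\mathcal{D}$, the supremum $\sup_{P\in\mathcal{P}}\Phi(P)$ is attained at some $f^{\hat{P}}\in\mathcal{D}$, and the corresponding $\hat{P}\in\mathcal{P}$ satisfies the claimed equality. The passage to the bounded estimator class $L_{\mathcal{C}}^{2p,M}(\mathbb{P})$ is precisely what Proposition \ref{pro-restrict} supplies, so the infimum defining $\Phi$ is over a fixed set independent of $P$; this is what lets the "infimum of u.s.c. is u.s.c." argument go through cleanly.

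The main obstacle I expect is verifying the u.s.c. of $\Phi$ rigorously, namely that restricting the estimators to the $P$-independent bounded class $L_{\mathcal{C}}^{2p,M}(\mathbb{P})$ does not secretly reintroduce a $P$-dependence that would break the semicontinuity. Here the role of Proposition \ref{pro-restrict} is essential: it guarantees that the same constant $M$ works uniformly over all $P\in\mathcal{P}$, so that $\Phi$ really is an infimum of a single family $\{l(\xi,\eta,\cdot):\eta\in L_{\mathcal{C}}^{2p,M}(\mathbb{P})\}$ of u.s.c. functions of $f^{P}$ alone. A secondary technical point is confirming that the finiteness of $\Phi$ is maintained under the limit, which follows from Lemma \ref{lem-pre} (boundedness of $\sup_{P}E_{P}[\xi^{2p}]$) together with the norm-boundedness of $\mathcal{D}$; these ensure $\Phi$ does not degenerate to $-\infty$ and that the attained value is genuine. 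Once these are in place, the existence of $\hat{P}$ is immediate from compactness.
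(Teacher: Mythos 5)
Your proposal is correct, and it takes a genuinely different route from the paper. The paper argues by hand: it takes a maximizing sequence $\{f^{P_n}\}$, extracts a weakly convergent subsequence with limit $f^{\hat P}$, invokes Mazur's lemma (Theorem \ref{Mazur}) to obtain convex combinations $f^{\tilde P_i}$ converging in $L^q$-norm, verifies convergence of both the expectation term and the penalty term along this sequence for each fixed $\eta$, and then uses the concavity of $l(\xi,\eta,\cdot)$ (i.e.\ convexity of $\alpha$) together with an exchange of $\inf_\eta$ and $\lim_i$ to conclude that $\hat P$ attains the supremum. You instead observe that $\Phi(f^P)=\inf_{\eta\in L_{\mathcal{C}}^{2p,M}(\mathbb{P})} l(\xi,\eta,f^P)$ is an infimum of a $P$-independent family of u.s.c.\ functions (Corollary \ref{lem-lsc-usc}), hence u.s.c.\ on $(\mathcal{D},\sigma(L^q,L^p))$ — since $\{\Phi<c\}=\bigcup_\eta\{l(\xi,\eta,\cdot)<c\}$ is open — and then apply the Weierstrass theorem on the weakly compact set $\mathcal{D}$. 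Both ingredients (u.s.c.\ of $l(\xi,\eta,\cdot)$ and weak compactness of $\mathcal{D}$) are already established in the paper, so nothing is missing. Your argument is shorter, needs neither the convexity of $\mathcal{D}$ nor the concavity of $l$ in $f^P$, and avoids the most delicate step of the paper's proof, namely the bound $|\alpha(f^{\hat P})-\alpha(f^{\tilde P_i})|\leq\sup_{\zeta\in\mathcal{A}_\rho}\Vert f^{\tilde P_i}-f^{\hat P}\Vert_{L^q}\Vert\zeta\Vert_{L^p}$, which implicitly requires a uniform $L^p$-bound over the acceptance set $\mathcal{A}_\rho$. What the paper's constructive route buys in exchange is an explicit norm-convergent maximizing sequence of densities, which it partly reuses in the surrounding analysis; if that by-product is not needed, your abstract argument is the cleaner one. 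One small point worth making explicit when you write this up: $\Phi>-\infty$ on $\mathcal{P}$ since $\alpha(P)<\infty$ there and $E_P[(\xi-\eta)^2]\geq 0$, so the attained maximum is a genuine real value.
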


\begin{proof}
Define
\[
\beta=\sup_{P\in\mathcal{P}}\inf_{\eta\in L_{\mathcal{C}}^{2p,M}(\mathbb{P}%
)}[E_{P}[(\xi-\eta)^{2}]-\alpha(P)]=\sup_{f^{P}\in\mathcal{D}}\inf_{\eta\in
L_{\mathcal{C}}^{2p,M}(\mathbb{P})}[\mathbb{E}[f^{P}(\xi-\eta)^{2}%
]-\alpha(f^{P})].
\]
Take a sequence $\{f^{P_{n}};P_{n}\in\mathcal{P}\}_{n\geq1}$ such that
\[
\inf_{\eta\in L_{\mathcal{C}}^{2p,M}(\mathbb{P})}[\mathbb{E}[f^{P_{n}}%
(\xi-\eta)^{2}]-\alpha(f^{P_{n}})]\geq\beta-\frac{1}{2^{n}}.
\]
Since $\mathcal{D}$ is a weakly compact set, we can take a subsequence
$\{f^{P_{n_{i}}}\}_{i\geq1}$ which weakly converges to some $f^{\hat{P}}\in
L_{\mathcal{F}}^{q}(\mathbb{P})$. Therefore, $\hat{P}\in\mathcal{P}$ and there
exists a sequence $\{f^{\tilde{P}_{i}}\in conv(f^{P_{n_{i}}},f^{P_{n_{i+1}}%
},...)\}_{i\geq1}$ such that $f^{\tilde{P}_{i}}$ converges to $f^{\hat{P}}$ in
$L_{\mathcal{F}}^{q}(\mathbb{P})$-norm by Theorem \ref{Mazur} in the Appendix.

For any $\eta\in L_{\mathcal{C}}^{2p,M}(\mathbb{P})$ and $i\in\mathbb{N}$,%
\[
\lim_{i\rightarrow\infty}\mathbb{E}|f^{\tilde{P}_{i}}(\xi-\eta)^{2}-f^{\hat
{P}}(\xi-\eta)^{2}|\leq\lim_{i\rightarrow\infty}||(f^{\tilde{P}_{i}}%
-f^{\hat{P}})||_{L^{q}(\mathbb{P})}||(\xi-\eta)^{2}||_{L^{p}(\mathbb{P})}=0,
\]
which leads to%
\[
\lim_{i\rightarrow\infty}\mathbb{E}[f^{\tilde{P}_{i}}(\xi-\eta)^{2}%
]=\mathbb{E}[f^{\hat{P}}(\xi-\eta)^{2}].
\]
On the other hand,
\[%
\begin{array}
[c]{rl}%
|\alpha(f^{\hat{P}})-\alpha(f^{\tilde{P}_{i}})| & =|\sup\limits_{\zeta
\in\mathcal{A}_{\rho}}\mathbb{E}[f^{\hat{P}}\zeta]-\sup\limits_{\zeta
\in\mathcal{A}_{\rho}}\mathbb{E}[f^{\tilde{P}_{i}}\zeta]|\leq\sup
\limits_{\zeta\in\mathcal{A}_{\rho}}\mathbb{E}[|(f^{\hat{P}}-f^{\tilde{P}_{i}%
})\zeta|]\\
& \leq\sup\limits_{\zeta\in\mathcal{A}_{\rho}}||(f^{\tilde{P}_{i}}-f^{\hat{P}%
})||_{L^{q}(\mathbb{P})}||\zeta||_{L^{p}(\mathbb{P})}.
\end{array}
\]
Then,
\[
\lim_{i\rightarrow\infty}[\mathbb{E}[[f^{\tilde{P}_{i}}(\xi-\eta)^{2}%
]-\alpha(f^{\tilde{P}_{i}})]=\mathbb{E}[f^{\hat{P}}(\xi-\eta)^{2}%
]-\alpha(f^{\hat{P}})].
\]
Since
\[
\lbrack\mathbb{E}[f^{\tilde{P}_{i}}(\xi-\eta)^{2}]-\alpha(f^{\tilde{P}_{i}%
})]\geq\inf_{\tilde{\eta}\in L_{\mathcal{C}}^{2p,M}(\mathbb{P})}%
[\mathbb{E}[f^{\tilde{P}_{i}}(\xi-\tilde{\eta})^{2}]-\alpha(f^{\tilde{P}_{i}%
})]
\]
for any $\eta\in L_{\mathcal{C}}^{2p,M}(\mathbb{P})$, we have that
\[
\lim_{i\rightarrow\infty}[\mathbb{E}[f^{\tilde{P}_{i}}(\xi-\eta)^{2}%
]-\alpha(f^{\tilde{P}_{i}})]\geq\limsup_{i\rightarrow\infty}\inf_{\tilde{\eta
}\in L_{\mathcal{C}}^{2p,M}(\mathbb{P})}[\mathbb{E}[f^{\tilde{P}_{i}}%
(\xi-\tilde{\eta})^{2}]-\alpha(f^{\tilde{P}_{i}})].
\]
It yields that%
\begin{equation}%
\begin{array}
[c]{rl}
& \inf_{\eta\in L_{\mathcal{C}}^{2p,M}(\mathbb{P})}[\mathbb{E}[f^{\hat{P}}%
(\xi-\eta)^{2}]-\alpha(\hat{P})]\\
= & \inf_{\eta\in L_{\mathcal{C}}^{2p,M}(\mathbb{P})}\lim_{i\rightarrow\infty
}[\mathbb{E}[f^{\tilde{P}_{i}}(\xi-\eta)^{2}]-\alpha(f^{\tilde{P}_{i}})]\\
\geq & \limsup_{i\rightarrow\infty}\inf_{\tilde{\eta}\in L_{\mathcal{C}%
}^{2p,M}(\mathbb{P})}[\mathbb{E}[f^{\tilde{P}_{i}}(\xi-\tilde{\eta}%
)^{2}]-\alpha(f^{\tilde{P}_{i}})].
\end{array}
\label{eq-exist-1}%
\end{equation}

As $\alpha(\cdot)$ is a convex function and $f^{\tilde{P}_{i}}\in
conv(f^{P_{n_{i}}},f^{P_{n_{i+1}}},...)$, we have
\begin{equation}%
\begin{array}
[c]{rl}%
\limsup\limits_{i\rightarrow\infty}\inf\limits_{\tilde{\eta}\in L_{\mathcal{C}%
}^{2p,M}(\mathbb{P})}[\mathbb{E}[f^{\tilde{P}_{i}}(\xi-\tilde{\eta}%
)^{2}]-\alpha(f^{\tilde{P}_{i}})]\geq\beta. &
\end{array}
\label{eq-exist-2}%
\end{equation}
Combining (\ref{eq-exist-1}) and (\ref{eq-exist-2}), we obtain the result.
\end{proof}

\begin{corollary}
\label{P exist2} Suppose that Assumption \ref{Asum-proper--stable} holds. Then
for a given $\xi\in L_{\mathcal{F}}^{4p}(\mathbb{P})$, there exists a $\hat
{P}\in\mathcal{P}$ such that
\[
\inf_{\eta\in L_{\mathcal{C}}^{2p}(\mathbb{P})}[E_{\hat{P}}[(\xi-\eta
)^{2}]-\alpha(\hat{P})]=\sup_{P\in\mathcal{P}}\inf_{\eta\in L_{\mathcal{C}%
}^{2p}(\mathbb{P})}[E_{P}[(\xi-\eta)^{2}]-\alpha(P)].
\]

\end{corollary}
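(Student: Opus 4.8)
The plan is to obtain the statement directly from Proposition \ref{P exist} together with Proposition \ref{pro-restrict} and the displayed identity immediately following it; essentially no new analytic work is required, since the compactness and semicontinuity arguments have already been carried out. I would first apply Proposition \ref{P exist} to the given $\xi\in L_{\mathcal{F}}^{4p}(\mathbb{P})$ to produce a measure $\hat{P}\in\mathcal{P}$ with
\[
\inf_{\eta\in L_{\mathcal{C}}^{2p,M}(\mathbb{P})}[E_{\hat{P}}[(\xi-\eta)^{2}]-\alpha(\hat{P})]=\sup_{P\in\mathcal{P}}\inf_{\eta\in L_{\mathcal{C}}^{2p,M}(\mathbb{P})}[E_{P}[(\xi-\eta)^{2}]-\alpha(P)],
\]
where $M$ is the constant furnished by Proposition \ref{pro-restrict}.

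The next step is to translate each side of this identity from the norm-bounded set $L_{\mathcal{C}}^{2p,M}(\mathbb{P})$ back to the full space $L_{\mathcal{C}}^{2p}(\mathbb{P})$. For the left-hand side I would apply Proposition \ref{pro-restrict} with the single measure $P=\hat{P}$, which gives
\[
\inf_{\eta\in L_{\mathcal{C}}^{2p,M}(\mathbb{P})}[E_{\hat{P}}[(\xi-\eta)^{2}]-\alpha(\hat{P})]=\inf_{\eta\in L_{\mathcal{C}}^{2p}(\mathbb{P})}[E_{\hat{P}}[(\xi-\eta)^{2}]-\alpha(\hat{P})].
\]
For the right-hand side I would invoke the displayed identity that follows Proposition \ref{pro-restrict}, namely that the outer supremum is unaffected when the inner infimum is taken over $L_{\mathcal{C}}^{2p}(\mathbb{P})$ instead of $L_{\mathcal{C}}^{2p,M}(\mathbb{P})$. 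Chaining the three equalities then yields the asserted identity for the very same $\hat{P}$.

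The one point worth flagging, and the reason this chaining is legitimate, is that the truncation constant $M$ in Proposition \ref{pro-restrict} depends only on $\xi$ (through the uniform bound $\sup_{P\in\mathcal{P}}E_{P}[\xi^{2p}]<\infty$ supplied by Lemma \ref{lem-pre} and Lemma \ref{pro-stable}) and not on the individual $P\in\mathcal{P}$; hence a single norm-bounded set $L_{\mathcal{C}}^{2p,M}(\mathbb{P})$ serves simultaneously for every measure in the supremum, so the passage between $L_{\mathcal{C}}^{2p,M}$ and $L_{\mathcal{C}}^{2p}$ is consistent on both sides at once. There is no genuine obstacle at this stage: the substantive difficulty, namely producing the maximizing $\hat{P}$ via weak compactness of $\mathcal{D}$, Mazur's lemma, and the lower semicontinuity of $\alpha(\cdot)$ (Lemma \ref{lsc} and Corollary \ref{lem-lsc-usc}), was already resolved in Proposition \ref{P exist}, and the present corollary is merely its reformulation over the untruncated space.
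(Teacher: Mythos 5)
Your proposal is correct and follows essentially the same route as the paper: choose $\hat{P}$ from Proposition \ref{P exist} and chain the equality over $L_{\mathcal{C}}^{2p,M}(\mathbb{P})$ with the two reductions supplied by Proposition \ref{pro-restrict} (applied to the single measure $\hat{P}$ on one side and via the displayed identity after that proposition on the other). Your observation that the constant $M$ is uniform in $P\in\mathcal{P}$ is exactly the point that makes the chaining legitimate, and it is implicit in the paper's own three-line argument.
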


\begin{proof}
Choose $\hat{P}$ as in Proposition \ref{P exist}. By Propositions
\ref{pro-restrict} and \ref{P exist}, the following relations hold
\[%
\begin{array}
[c]{rl}
& \sup\limits_{P\in\mathcal{P}}\inf\limits_{\eta\in L_{\mathcal{C}}%
^{2p}(\mathbb{P})}[E_{P}[(\xi-\eta)^{2}]-\alpha(P)]=\sup\limits_{P\in
\mathcal{P}}\inf\limits_{\eta\in L_{\mathcal{C}}^{2p,M}(\mathbb{P})}%
[E_{P}[(\xi-\eta)^{2}]-\alpha(P)]\\
& =\inf\limits_{\eta\in L_{\mathcal{C}}^{2p,M}(\mathbb{P})}[E_{\hat{P}}%
[(\xi-\eta)^{2}]-\alpha(\hat{P})]=\inf\limits_{\eta\in L_{\mathcal{C}}%
^{2p}(\mathbb{P})}[E_{\hat{P}}[(\xi-\eta)^{2}]-\alpha(\hat{P})].
\end{array}
\]
This completes the proof.
\end{proof}

\begin{theorem}
[Existence theorem]\label{existence} Suppose that Assumption
\ref{Asum-proper--stable} holds. Then there exists a $\hat{\eta}\in
L_{\mathcal{C}}^{2p}(\mathbb{P})$ which solves problem \eqref{problem}.
\end{theorem}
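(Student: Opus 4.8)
The plan is to establish existence of the MMSE by combining the minimax equality (Corollary \ref{P exist2}) with a direct-method argument on the inner minimization. The key observation is that Problem \ref{problem} asks for a minimizer $\hat\eta$ of the outer functional $\eta\mapsto\rho((\xi-\eta)^{2})=\sup_{P\in\mathcal{P}}[E_{P}[(\xi-\eta)^{2}]-\alpha(P)]$, and by Corollary \ref{P exist2} the $\sup$ and $\inf$ may be interchanged, with the supremum attained at some $\hat P\in\mathcal{P}$. So the strategy reduces to showing that the inner problem $\inf_{\eta}[E_{\hat P}[(\xi-\eta)^{2}]-\alpha(\hat P)]$ is itself attained: since $\alpha(\hat P)$ is a constant, this is exactly the classical $L^{2}(\hat P)$ projection of $\xi$ onto the closed subspace of $\mathcal{C}$-measurable functions, whose minimizer is the conditional expectation $\hat\eta=E_{\hat P}[\xi|\mathcal{C}]$.

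\textbf{The main steps, in order.} First I would invoke Corollary \ref{P exist2} to fix $\hat P\in\mathcal{P}$ realizing the outer supremum, and note that $\hat P$ is equivalent to $\mathbb{P}$ by properness (Assumption \ref{Asum-proper--stable}), so $L^{2}$-norms under $\hat P$ and $\mathbb{P}$ control each other modulo the bounded density $f^{\hat P}$. Second, I would set $\hat\eta:=E_{\hat P}[\xi|\mathcal{C}]$ and verify $\hat\eta\in L_{\mathcal{C}}^{2p}(\mathbb{P})$: by the argument already used in the proof of Proposition \ref{pro-restrict}, $\mathbb{E}[(E_{\hat P}[\xi|\mathcal{C}])^{2p}]\le\mathbb{E}[E_{\hat P}[\xi^{2p}|\mathcal{C}]]=E_{\bar P}[\xi^{2p}]\le M_{1}$ using Lemma \ref{pro-stable} and Lemma \ref{lem-pre}, so in fact $\hat\eta$ lands in the norm-bounded set $L_{\mathcal{C}}^{2p,M}(\mathbb{P})$. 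Third, the projection property gives $E_{\hat P}[(\xi-\hat\eta)^{2}]=\inf_{\eta\in L_{\mathcal{C}}^{2p}(\mathbb{P})}E_{\hat P}[(\xi-\eta)^{2}]$, hence $\hat\eta$ attains the inner infimum for $\hat P$.

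\textbf{Closing the argument.} It then remains to upgrade this into optimality for the original outer problem. Using the chain of equalities in Corollary \ref{P exist2} together with the trivial direction $\sup_{P}\inf_{\eta}\le\inf_{\eta}\sup_{P}$, I would show
\[
\rho((\xi-\hat\eta)^{2})\ge\inf_{\eta\in L_{\mathcal{C}}^{2p}(\mathbb{P})}\rho((\xi-\eta)^{2})=\inf_{\eta}\sup_{P}[E_{P}[(\xi-\eta)^{2}]-\alpha(P)]=\sup_{P}\inf_{\eta}[\cdots]=\inf_{\eta}[E_{\hat P}[(\xi-\eta)^{2}]-\alpha(\hat P)],
\]
while the converse $\rho((\xi-\hat\eta)^{2})\le\inf_{\eta}[E_{\hat P}[(\xi-\eta)^{2}]-\alpha(\hat P)]$ is not automatic and is where the saddle structure must be exploited: one needs that $\hat\eta$ minimizing against $\hat P$, with $\hat P$ maximizing the reduced functional, forms a saddle point so that $\sup_{P}[E_{P}[(\xi-\hat\eta)^{2}]-\alpha(P)]$ equals the common minimax value rather than exceeding it.

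\textbf{The hard part} will be precisely this last identification of the saddle point: showing that the particular $\hat\eta=E_{\hat P}[\xi|\mathcal{C}]$ attaining the inner infimum at the maximizing measure $\hat P$ also attains the outer infimum of $\rho((\xi-\eta)^{2})$. The subtlety is that the minimax theorem gives equality of the two iterated optima but does not by itself produce a single $\eta$ that is simultaneously optimal for the $\sup_{P}$-functional; one must argue, via the attainment of both extrema and the constant-in-$\eta$ nature of $\alpha(\hat P)$, that this candidate realizes the value $\beta$, so that $\rho((\xi-\hat\eta)^{2})=\inf_{\eta}\rho((\xi-\eta)^{2})$ and $\hat\eta$ solves \eqref{problem}.
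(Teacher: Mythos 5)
There is a genuine gap, and it sits exactly where you flagged ``the hard part'': your plan never actually closes it, and it cannot be closed by the tools you cite. Knowing that $\hat P$ attains $\sup_{P}\inf_{\eta}[E_{P}[(\xi-\eta)^{2}]-\alpha(P)]$ and that $\hat\eta=E_{\hat P}[\xi|\mathcal{C}]$ attains the inner infimum for that fixed $\hat P$ does \emph{not} imply that $(\hat\eta,\hat P)$ is a saddle point, nor that $\hat\eta$ minimizes $\eta\mapsto\sup_{P}[E_{P}[(\xi-\eta)^{2}]-\alpha(P)]$. The characterization of saddle points in Theorem \ref{minimax} requires that \emph{both} outer optima be attained, i.e.\ that $\min_{\eta}\sup_{P}$ is achieved --- which is precisely the existence statement you are trying to prove, so invoking the saddle structure here is circular. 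And the implication genuinely fails without it: already for a bilinear convex--concave payoff such as $l(x,y)=x_{1}y_{2}$ on the product of two simplices, the minimax value is $0$ and $\hat y=(1,0)$ is a maximizer of $\inf_{x}l(x,\cdot)$, yet a best response to $\hat y$ (e.g.\ $x=(1,0)$) has $\sup_{y}l(x,y)=1>0$. The only thing one can say a priori is the useless direction $\rho((\xi-\hat\eta)^{2})\geq\beta$, which is the inequality your displayed chain actually establishes. It is telling that the paper proves the saddle-point property only in Proposition \ref{Prop--unique}, \emph{after} and \emph{using} Theorem \ref{existence}.

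The paper's proof takes a different and non-circular route: a direct method on the outer problem. It restricts to the norm-bounded set $L_{\mathcal{C}}^{2p,M}(\mathbb{P})$ (via Proposition \ref{pro-restrict}), takes a minimizing sequence $\{\eta_{n}\}$ for $\eta\mapsto\rho((\xi-\eta)^{2})$, extracts a weakly convergent subsequence by weak compactness of the ball, applies Mazur's lemma (Theorem \ref{Mazur}) to obtain convex combinations $\tilde\eta_{i}$ converging in norm to a limit $\hat\eta$, uses convexity of $\rho$ to keep $\rho((\xi-\tilde\eta_{i})^{2})\leq\beta+2^{-(i-1)}$, and finally a H\"older estimate with $\sup_{P}\|f^{P}\|_{L^{q}}<\infty$ to show $\rho((\xi-\hat\eta)^{2})=\beta$. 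Your preliminary steps (the reduction to $L_{\mathcal{C}}^{2p,M}(\mathbb{P})$ and the verification that $E_{\hat P}[\xi|\mathcal{C}]$ lies in it) are correct and match Proposition \ref{pro-restrict}, but to make your argument work you would have to replace the saddle-point step with an argument of this direct-method type applied to the outer functional itself.
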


\begin{proof}
For given $\xi\in L_{\mathcal{F}}^{4p}(\mathbb{P})$, $\eta\in L_{\mathcal{C}%
}^{2p}(\mathbb{P})$ and $P\in\mathcal{P}$, it is easy to check that
$l(\xi,\cdot,f^{P})$ is convex on $L_{\mathcal{C}}^{2p}(\mathbb{P})$ and
$l(\xi,\eta,\cdot)$ is concave on $L_{\mathcal{F}}^{q}(\mathbb{P})$. As
$\mathcal{D}$ is $\sigma(L_{\mathcal{F}}^{q}(\mathbb{P}),L_{\mathcal{F}}%
^{p}(\mathbb{P}))$-compact and $l(\xi,\eta,\cdot)$ is u.s.c on the topology
space $(L_{\mathcal{F}}^{q}(\mathbb{P}),\sigma(L_{\mathcal{F}}^{q}%
(\mathbb{P})\\ , L_{\mathcal{F}}^{p}(\mathbb{P})))$ by Lemma \ref{lem-lsc-usc}, we
have
\begin{align*}
\inf_{\eta\in L_{\mathcal{C}}^{2p}(\mathbb{P})}\max_{P\in\mathcal{P}}%
[E_{P}[(\xi-\eta)^{2}]-\alpha(P)]  &  =\max_{P\in\mathcal{P}}\inf_{\eta\in
L_{\mathcal{C}}^{2p}(\mathbb{P})}[E_{P}[(\xi-\eta)^{2}]-\alpha(P)];\\
\inf_{\eta\in L_{\mathcal{C}}^{2p,M}(\mathbb{P})}\max_{P\in\mathcal{P}}%
[E_{P}[(\xi-\eta)^{2}]-\alpha(P)]  &  =\max_{P\in\mathcal{P}}\inf_{\eta\in
L_{\mathcal{C}}^{2p,M}(\mathbb{P})}[E_{P}[(\xi-\eta)^{2}]-\alpha(P)]
\end{align*}
by Proposition \ref{P exist}, Corollary \ref{P exist2} and Theorem
\ref{minmax} in the Appendix. With the help of Proportion \ref{pro-restrict},
\[
\inf_{\eta\in L_{\mathcal{C}}^{2p}(\mathbb{P})}\max_{P\in\mathcal{P}}%
[E_{P}[(\xi-\eta)^{2}]-\alpha(P)]=\inf_{\eta\in L_{\mathcal{C}}^{2p,M}%
(\mathbb{P})}\max_{P\in\mathcal{P}}[E_{P}[(\xi-\eta)^{2}]-\alpha(P)].
\]

Therefore, we can take a sequence $\{\eta_{n};n\in\mathbb{N}\}\subset
L_{\mathcal{C}}^{2p,M}(\mathbb{P})$ such that
\[
\rho(\xi-\eta_{n})^{2}<\beta+\frac{1}{2^{n}},
\]
where $\beta:=\inf_{\eta\in L_{\mathcal{C}}^{2p}(\mathbb{P})}\rho(\xi
-\eta)^{2}$. Since $L_{\mathcal{C}}^{2p,M}(\mathbb{P})$ is a weakly compact
set, we can take a subsequence $\{\eta_{n_{i}}\}_{i\in\mathbb{N}}$ of
$\{\eta_{n}\}_{n\in\mathbb{N}}$ which weakly converges to some $\hat{\eta}\in
L_{\mathcal{C}}^{2p,M}(\mathbb{P})$. By theorem \ref{Mazur} in the Appendix,
there exists a sequence $\{\tilde{\eta}_{i}\in\text{conv}(\eta_{n_{i}}%
,\eta_{n_{i+1}},\cdots)\}_{i\in\mathbb{N}}$ such that $\tilde{\eta}_{i}$
converges to $\hat{\eta}$ in $L_{\mathcal{C}}^{2p}(\mathbb{P})$-norm. Then
\begin{equation}%
\begin{array}
[c]{r@{}l}%
\rho(\xi-\hat{\eta})^{2} & =\rho(\xi-\tilde{\eta}_{i}+\tilde{\eta}_{i}%
-\hat{\eta})^{2}\\
& =\sup_{P\in\mathcal{P}}[E_{P}[(\xi-\tilde{\eta}_{i})^{2}+(\tilde{\eta}%
_{i}-\hat{\eta})^{2}+2(\xi-\tilde{\eta}_{i})(\tilde{\eta}_{i}-\hat{\eta
})]-\alpha(P)]\\
& \leq\sup_{P\in\mathcal{P}}[E_{P}[(\xi-\tilde{\eta}_{i})^{2}]-\alpha
(P)]+\sup_{P\in\mathcal{P}}E_{P}[(\tilde{\eta}_{i}-\hat{\eta})^{2}\\
&+2(\xi-\tilde{\eta}_{i})(\tilde{\eta}_{i}-\hat{\eta})]\\
& =\rho(\xi-\tilde{\eta}_{i})^{2}+\sup_{P\in\mathcal{P}}E_{P}[-(\tilde{\eta
}_{i}-\hat{\eta})^{2}+2(\xi-\hat{\eta})(\tilde{\eta}_{i}-\hat{\eta})]\\
& \leq\beta+\frac{1}{2^{i-1}}+2\sup_{P\in\mathcal{P}}||f^{P}||_{L^{q}}%
||\tilde{\eta}_{i}-\hat{\eta}||_{L^{2p}}(1+||\xi-\hat{\eta}||_{L^{2p}}).
\end{array}
\label{3.1}%
\end{equation}
As (\ref{3.1}) holds for any $i\geq1$, we have that $\rho(\xi-\hat{\eta}%
)^{2}=\beta$.
\end{proof}

\subsubsection{Uniqueness}

In this subsection, we prove that the optimal solution of problem
\eqref{problem} is unique.

\begin{proposition}
\label{Prop--unique}Suppose that Assumption \ref{Asum-proper--stable} holds.
If $\hat{\eta}$ is an optimal solution of problem \eqref{problem}, then there
exists a $\hat{P}\in\mathcal{P}$ such that $\hat{\eta}=E_{\hat{P}}%
[\xi|\mathcal{C}]$.
\end{proposition}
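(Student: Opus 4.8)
The plan is to recognize the given optimal $\hat{\eta}$ and a maximizing measure as the two halves of a saddle point, and then to identify that saddle point with a conditional expectation. Write $\beta := \inf_{\eta \in L_{\mathcal{C}}^{2p}(\mathbb{P})} \rho((\xi-\eta)^{2})$ for the optimal value. Since $\hat{\eta}$ is assumed optimal, $\rho((\xi-\hat{\eta})^{2}) = \sup_{P \in \mathcal{P}}[E_{P}[(\xi-\hat{\eta})^{2}] - \alpha(P)] = \beta$. By Theorem \ref{existence} the minimax equality
\[
\inf_{\eta \in L_{\mathcal{C}}^{2p}(\mathbb{P})} \sup_{P \in \mathcal{P}}[E_{P}[(\xi-\eta)^{2}] - \alpha(P)] = \sup_{P \in \mathcal{P}} \inf_{\eta \in L_{\mathcal{C}}^{2p}(\mathbb{P})}[E_{P}[(\xi-\eta)^{2}] - \alpha(P)]
\]
holds, and by Corollary \ref{P exist2} the right-hand supremum is attained at some $\hat{P} \in \mathcal{P}$, so that $\inf_{\eta \in L_{\mathcal{C}}^{2p}(\mathbb{P})}[E_{\hat{P}}[(\xi-\eta)^{2}] - \alpha(\hat{P})] = \beta$.

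Next I would run the standard saddle-point sandwich. For the given $\hat{\eta}$ and this $\hat{P}$,
\[
\beta = \inf_{\eta \in L_{\mathcal{C}}^{2p}(\mathbb{P})}[E_{\hat{P}}[(\xi-\eta)^{2}] - \alpha(\hat{P})] \le E_{\hat{P}}[(\xi-\hat{\eta})^{2}] - \alpha(\hat{P}) \le \sup_{P \in \mathcal{P}}[E_{P}[(\xi-\hat{\eta})^{2}] - \alpha(P)] = \beta .
\]
Hence every inequality is an equality; in particular $E_{\hat{P}}[(\xi-\hat{\eta})^{2}] - \alpha(\hat{P}) = \inf_{\eta \in L_{\mathcal{C}}^{2p}(\mathbb{P})}[E_{\hat{P}}[(\xi-\eta)^{2}] - \alpha(\hat{P})]$. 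As $\alpha(\hat{P})$ does not depend on $\eta$, this says precisely that $\hat{\eta}$ minimizes $\eta \mapsto E_{\hat{P}}[(\xi-\eta)^{2}]$ over $L_{\mathcal{C}}^{2p}(\mathbb{P})$.

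Finally I would identify this minimizer with a conditional expectation. Since $\rho$ is proper, $\hat{P}$ is equivalent to $\mathbb{P}$, so $E_{\hat{P}}[\xi|\mathcal{C}]$ is well defined, and the inclusion $\mathbb{G} = \{E_{P}[\xi|\mathcal{C}] : P \in \mathcal{P}\} \subset L_{\mathcal{C}}^{2p,M}(\mathbb{P})$ established in Proposition \ref{pro-restrict} guarantees $E_{\hat{P}}[\xi|\mathcal{C}] \in L_{\mathcal{C}}^{2p}(\mathbb{P})$. Using the H\"older estimate $E_{\hat{P}}[\zeta^{2}] = \mathbb{E}[f^{\hat{P}}\zeta^{2}] \le \|f^{\hat{P}}\|_{L^{q}} \|\zeta\|_{L^{2p}}^{2}$, both $\xi$ and every $\eta \in L_{\mathcal{C}}^{2p}(\mathbb{P})$ lie in $L^{2}(\hat{P})$, so the minimization is genuinely the orthogonal projection of $\xi$ onto the closed subspace of $\mathcal{C}$-measurable elements of the Hilbert space $L^{2}(\hat{P})$; this projection is unique and equals $E_{\hat{P}}[\xi|\mathcal{C}]$. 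Because $E_{\hat{P}}[\xi|\mathcal{C}]$ already belongs to the feasible set $L_{\mathcal{C}}^{2p}(\mathbb{P})$ and $\hat{\eta}$ attains the same minimal value, uniqueness of the projection forces $\hat{\eta} = E_{\hat{P}}[\xi|\mathcal{C}]$ $\hat{P}$-a.s., hence $\mathbb{P}$-a.s. by equivalence.

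The step I expect to be the main obstacle is this last identification: one must be sure that the unconstrained $L^{2}(\hat{P})$-projection $E_{\hat{P}}[\xi|\mathcal{C}]$ actually lands in the smaller feasible class $L_{\mathcal{C}}^{2p}(\mathbb{P})$ (so that minimizing over $L_{\mathcal{C}}^{2p}(\mathbb{P})$ coincides with Hilbert-space projection), and that the resulting $\hat{P}$-a.s. equality transfers back to a $\mathbb{P}$-a.s. statement. Both of these hinge on the hypotheses collected in Assumption \ref{Asum-proper--stable}, namely properness (equivalence of $\hat{P}$ and $\mathbb{P}$) and the uniform bound $M$ of Proposition \ref{pro-restrict}, which is exactly why those assumptions are needed here.
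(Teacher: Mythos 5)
Your proposal is correct and follows essentially the same route as the paper: identify a maximizer $\hat{P}$ of the $\sup\inf$ problem via Corollary \ref{P exist2} and the minimax equality, deduce that $(\hat{\eta},\hat{P})$ is a saddle point (you do this by a direct sandwich of inequalities where the paper invokes Theorem \ref{minimax}), and then read off $\hat{\eta}=E_{\hat{P}}[\xi|\mathcal{C}]$ from the projection property of conditional expectations. Your last paragraph is in fact more careful than the paper's one-line appeal to the projection property, since you explicitly verify that $E_{\hat{P}}[\xi|\mathcal{C}]$ lies in the feasible class $L_{\mathcal{C}}^{2p}(\mathbb{P})$ and that the $\hat{P}$-a.s. identification transfers to a $\mathbb{P}$-a.s. one by properness.
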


\begin{proof}
If $\hat{\eta}$ is an optimal solution of problem \eqref{problem}, then there
exists a $\hat{P}\in\mathcal{P}$ such that
\begin{align*}
&  \sup_{P\in\mathcal{P}}[\mathbb{E}[f_{P}(\xi-\hat{\eta})^{2}]-\alpha(P)]\\
&  =\max_{P\in\mathcal{P}}[\mathbb{E}[f_{P}(\xi-\hat{\eta})^{2}]-\alpha(P)]\\
&  =\inf_{\eta\in L_{\mathcal{C}}^{2p}(\mathbb{P})}\max_{P\in\mathcal{P}%
}[\mathbb{E}[f_{P}(\xi-\eta)^{2}]-\alpha(P)]\\
&  =\max_{P\in\mathcal{P}}\inf_{\eta\in L_{\mathcal{C}}^{2p}(\mathbb{P}%
)}[\mathbb{E}[f_{P}(\xi-\eta)^{2}]-\alpha(P)]\\
&  =\inf_{\eta\in L_{\mathcal{C}}^{2p}(\mathbb{P})}[\mathbb{E}[f^{\hat{P}}%
(\xi-\eta)^{2}]-\alpha(\hat{P})]
\end{align*}
by Corollary \ref{P exist2}, Theorem \ref{existence} and Theorem \ref{minmax}
in the Appendix. Thus, by Theorem \ref{saddle point} in the Appendix, $(\hat{\eta
},\hat{P})$ is a saddle point, i.e., for $\forall P\in\mathcal{P},\eta\in L_{\mathcal{C}%
}^{2p}(\mathbb{P})$, we have
\[
\mathbb{E}[f^{P}(\xi-\hat{\eta})^{2}]-\alpha(P)\leq\mathbb{E}[f^{\hat{P}}%
(\xi-\hat{\eta})^{2}]-\alpha(\hat{P})\leq\mathbb{E}[f^{\hat{P}}(\xi-\eta
)^{2}]-\alpha(\hat{P}).
\]

This shows that if $\hat{\eta}$ is an optimal solution, then there exists a
$\hat{P}\in\mathcal{P}$ such that $\hat{\eta}=E_{\hat{P}}[\xi|\mathcal{C}]$ by
the project property of conditional expectations.
\end{proof}

\begin{theorem}
[Uniqueness theorem]\label{unique} Suppose that Assumption
\ref{Asum-proper--stable} holds.\\ Then, the optimal solution of problem
\eqref{problem} is unique.
\end{theorem}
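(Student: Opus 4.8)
The plan is to combine the convexity of the objective with the worst-case measure supplied by Proposition~\ref{Prop--unique}, thereby turning the strict convexity of the square function into strict convexity of a reduced quadratic functional evaluated under a single measure equivalent to $\mathbb{P}$.

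First I would record that $\eta\mapsto\rho((\xi-\eta)^{2})$ is convex on $L_{\mathcal{C}}^{2p}(\mathbb{P})$. Indeed, for $\eta_{1},\eta_{2}$ and $\lambda\in[0,1]$ the pointwise bound $(\xi-\lambda\eta_{1}-(1-\lambda)\eta_{2})^{2}\le\lambda(\xi-\eta_{1})^{2}+(1-\lambda)(\xi-\eta_{2})^{2}$, together with the monotonicity and convexity of $\rho$ in Definition~\ref{def2.1}, yields $\rho((\xi-\lambda\eta_{1}-(1-\lambda)\eta_{2})^{2})\le\lambda\rho((\xi-\eta_{1})^{2})+(1-\lambda)\rho((\xi-\eta_{2})^{2})$. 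Hence the set of minimizers is convex: if $\hat{\eta}_{1}\neq\hat{\eta}_{2}$ are two optimal solutions of \eqref{problem} with common optimal value $\beta$, their midpoint $\bar{\eta}=\tfrac{1}{2}(\hat{\eta}_{1}+\hat{\eta}_{2})$ is again optimal.

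Next I would apply Proposition~\ref{Prop--unique} to $\bar{\eta}$: since $\bar{\eta}$ is optimal, there is a worst-case measure $\hat{P}\in\mathcal{P}$ with $\bar{\eta}=E_{\hat{P}}[\xi|\mathcal{C}]$ and, from the saddle-point property, $\rho((\xi-\bar{\eta})^{2})=E_{\hat{P}}[(\xi-\bar{\eta})^{2}]-\alpha(\hat{P})=\beta$. Because $\rho(\cdot)=\sup_{P\in\mathcal{P}}[E_{P}[\cdot]-\alpha(P)]$, each of the two optimizers satisfies the lower bound $\beta=\rho((\xi-\hat{\eta}_{i})^{2})\ge E_{\hat{P}}[(\xi-\hat{\eta}_{i})^{2}]-\alpha(\hat{P})$ for $i=1,2$. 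Averaging these two inequalities and using the elementary parallelogram identity
\[
\tfrac{1}{2}(\xi-\hat{\eta}_{1})^{2}+\tfrac{1}{2}(\xi-\hat{\eta}_{2})^{2}=(\xi-\bar{\eta})^{2}+\tfrac{1}{4}(\hat{\eta}_{1}-\hat{\eta}_{2})^{2}
\]
gives $\beta\ge E_{\hat{P}}[(\xi-\bar{\eta})^{2}]-\alpha(\hat{P})+\tfrac{1}{4}E_{\hat{P}}[(\hat{\eta}_{1}-\hat{\eta}_{2})^{2}]=\beta+\tfrac{1}{4}E_{\hat{P}}[(\hat{\eta}_{1}-\hat{\eta}_{2})^{2}]$, forcing $E_{\hat{P}}[(\hat{\eta}_{1}-\hat{\eta}_{2})^{2}]=0$.

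Finally I would invoke properness. By Assumption~\ref{Asum-proper--stable} every element of $\mathcal{P}$ is equivalent to $\mathbb{P}$, so $\hat{P}\sim\mathbb{P}$, and the vanishing $\hat{P}$-expectation of the nonnegative integrable variable $(\hat{\eta}_{1}-\hat{\eta}_{2})^{2}$ upgrades to $\hat{\eta}_{1}=\hat{\eta}_{2}$ $\mathbb{P}$-a.s., the desired contradiction. The main obstacle is the middle step: the crude convexity argument only shows the square function is convex but not strictly, so one cannot conclude uniqueness directly. The essential device is to freeze the single worst-case measure $\hat{P}$ attached to the midpoint $\bar{\eta}$ (guaranteed by the saddle-point statement of Proposition~\ref{Prop--unique}), which restores the genuine strict convexity of $\eta\mapsto E_{\hat{P}}[(\xi-\eta)^{2}]$ via the parallelogram identity; properness then converts the resulting $\hat{P}$-a.s. equality into the required $\mathbb{P}$-a.s. equality.
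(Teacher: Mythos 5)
Your proof is correct, but it takes a genuinely different route from the paper's. The paper perturbs in the \emph{measure} variable: it attaches a worst-case measure $\hat{P}_i$ to each of the two optimizers $\hat{\eta}_i$ via Proposition~\ref{Prop--unique}, forms the mixture $P^{\lambda}=\lambda\hat{P}_1+(1-\lambda)\hat{P}_2$, computes $E_{P^{\lambda}}[\xi|\mathcal{C}]$ as a random convex combination $\lambda_{\hat{P}_1}\hat{\eta}_1+\lambda_{\hat{P}_2}\hat{\eta}_2$ with $\mathcal{C}$-measurable weights, and extracts the excess terms $\lambda E_{\hat{P}_1}[\lambda_{\hat{P}_2}^2(\hat{\eta}_1-\hat{\eta}_2)^2]+(1-\lambda)E_{\hat{P}_2}[\lambda_{\hat{P}_1}^2(\hat{\eta}_1-\hat{\eta}_2)^2]$ using the convexity of $\alpha(\cdot)$, squeezing them to zero against the saddle-point upper bound for $(\hat{\eta}_1,\hat{P}_1)$. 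You instead perturb in the \emph{estimator} variable: convexity of $\eta\mapsto\rho((\xi-\eta)^2)$ makes the midpoint $\bar{\eta}$ optimal, you attach a single worst-case measure $\hat{P}$ to $\bar{\eta}$, and the parallelogram identity under that one measure produces the strict-convexity defect $\tfrac14 E_{\hat{P}}[(\hat{\eta}_1-\hat{\eta}_2)^2]$ directly. Your version is shorter and avoids the algebra with the random weights and the mixed measure; what the paper's version buys is that it never needs to verify convexity of the objective in $\eta$ and works symmetrically with the two saddle points it already has in hand. One presentational caveat: the equality $E_{\hat{P}}[(\xi-\bar{\eta})^2]-\alpha(\hat{P})=\beta$ (as opposed to the trivial inequality $\le\beta$) is what drives your contradiction, and it comes from the saddle-point display established \emph{inside the proof} of Proposition~\ref{Prop--unique} (equivalently, from $\hat{P}$ attaining the sup-inf in Corollary~\ref{P exist2}), not from the proposition's statement as written; you should cite that explicitly. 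Both arguments invoke properness at the same final step to upgrade the $\hat{P}$-a.s.\ conclusion to a $\mathbb{P}$-a.s.\ one.
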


\begin{proof}
Suppose that there exist two optimal solutions $\hat{\eta}_{1}$ and $\hat
{\eta}_{2}$. Denote the corresponding probabilities in Proposition
\ref{Prop--unique} by $\hat{P}_{1}$ and $\hat{P}_{2}$ respectively. Then
$\hat{\eta}_{1}=E_{\hat{P}_{1}}[\xi|\mathcal{C}]$ and $\hat{\eta}_{2}%
=E_{\hat{P}_{2}}[\xi|\mathcal{C}]$. For $\lambda\in(0,1)$, set
\begin{align*}
P^{\lambda}  &  =\lambda\hat{P}_{1}+(1-\lambda)\hat{P}_{2},\\
\lambda_{\hat{P}_{1}}  &  =\lambda E_{P^{\lambda}}[\frac{d\hat{P}_{1}%
}{dP^{\lambda}}|\mathcal{C}]\text{,}\\
\lambda_{\hat{P}_{2}}  &  =(1-\lambda)E_{P^{\lambda}}[\frac{d\hat{P}_{2}%
}{dP^{\lambda}}|\mathcal{C}]\text{.}%
\end{align*}
It is easy to verify that $\lambda_{\hat{P}_{1}}+\lambda_{\hat{P}_{2}}=1$ and
$E_{P^{\lambda}}[\xi|\mathcal{C}]=\lambda_{\hat{P}_{1}}\hat{\eta}_{1}%
+\lambda_{\hat{P}_{2}}\hat{\eta}_{2}$. Noticing that $E_{\hat{P}_{i}}[\xi
-\hat{\eta}_{i}|\mathcal{C}]=0$, $i=1,2$, then we have the following inequality
(Details of the calculation can be found in Lemma \ref{calculate} in the Appendix):%
\[%
\begin{array}
[c]{l@{}l}
& E_{P^{\lambda}}[(\xi-E_{P^{\lambda}}[\xi|\mathcal{C}])^{2}]-\alpha
(P^{\lambda})\\
= & E_{P^{\lambda}}[(\xi-\lambda_{\hat{P}_{1}}\hat{\eta}_{1}-\lambda_{\hat
{P}_{2}}\hat{\eta}_{2})^{2}]-\alpha(P^{\lambda})\\
= & E_{P^{\lambda}}[(\lambda_{\hat{P}_{1}}(\xi-\hat{\eta}_{1})+\lambda
_{\hat{P}_{2}}(\xi-\hat{\eta}_{2}))^{2}]-\alpha(P^{\lambda})\\
= & E_{P^{\lambda}}[\lambda_{\hat{P}_{1}}(\xi-\hat{\eta}_{1})^{2}%
+\lambda_{\hat{P}_{2}}(\xi-\hat{\eta}_{2})^{2}-\lambda_{\hat{P}_{1}}%
\lambda_{\hat{P}_{2}}(\hat{\eta}_{1}-\hat{\eta}_{2})^{2}]-\alpha(P^{\lambda
})\\
= & \lambda E_{\hat{P}_{1}}[(\xi-\hat{\eta}_{1})^{2}-\lambda_{\hat{P}_{2}%
}((\xi-\hat{\eta}_{1})^{2}-(\xi-\hat{\eta}_{2})^{2}+(\hat{\eta}_{1}-\hat{\eta
}_{2})^{2})+\lambda_{\hat{P}_{2}}^{2}(\hat{\eta}_{1}-\hat{\eta}_{2})^{2}]\\
& +(1-\lambda)E_{\hat{P}_{2}}[\lambda_{\hat{P}_{1}}((\xi-\hat{\eta}_{1}%
)^{2}-(\xi-\hat{\eta}_{2})^{2}-(\hat{\eta}_{1}-\hat{\eta}_{2})^{2})+(\xi
-\hat{\eta}_{2})^{2}+\lambda_{\hat{P}_{1}}^{2}(\hat{\eta}_{1}-\hat{\eta}%
_{2})^{2}]\\
&-\alpha(P^{\lambda})\\
= & \lambda E_{\hat{P}_{1}}[(\xi-\hat{\eta}_{1})^{2}]+\lambda E_{\hat{P}_{1}%
}[\lambda_{\hat{P}_{2}}^{2}(\hat{\eta}_{1}-\hat{\eta}_{2})^{2}])+(1-\lambda
)E_{\hat{P}_{2}}[(\xi-\hat{\eta}_{2})^{2}]-\alpha(P^{\lambda})\\
&+(1-\lambda)E_{\hat{P}_{2}}[\lambda_{\hat{P}_{1}}^{2}(\hat{\eta}_{1}-\hat{\eta}_{2})^{2}].
\end{array}
\]
Set $\beta=\inf_{\eta\in L_{\mathcal{C}}^{2p}(\mathbb{P})}\rho(\xi-\eta)^{2}$.
By the above equation and the convexity of $\alpha(\cdot)$,
\begin{equation}%
\begin{array}
[c]{l@{}l}
& E_{P^{\lambda}}[(\xi-E_{P^{\lambda}}[\xi|\mathcal{C}])^{2}]-\alpha
(P^{\lambda})\\
\geq & \lambda E_{\hat{P}_{1}}[(\xi-\hat{\eta}_{1})^{2}]+(1-\lambda)E_{\hat
{P}_{2}}[(\xi-\hat{\eta}_{2})^{2}]-[\lambda\alpha(\hat{P}_{1})+(1-\lambda
)\alpha(\hat{P}_{2})]\\
& +\lambda E_{\hat{P}_{1}}[\lambda_{\hat{P}_{2}}^{2}(\hat{\eta}_{1}-\hat{\eta
}_{2})^{2}]+(1-\lambda)E_{\hat{P}_{2}}[\lambda_{\hat{P}_{1}}^{2}(\hat{\eta
}_{1}-\hat{\eta}_{2})^{2}]\\
= & \beta+\lambda E_{\hat{P}_{1}}[\lambda_{\hat{P}_{2}}^{2}(\hat{\eta}%
_{1}-\hat{\eta}_{2})^{2}]+(1-\lambda)E_{\hat{P}_{2}}[\lambda_{\hat{P}_{1}}%
^{2}(\hat{\eta}_{1}-\hat{\eta}_{2})^{2}]\\
\geq & \beta.
\end{array}
\label{eq-unique-1}%
\end{equation}

On the other hand, since $(\hat{\eta}_{1},\hat{P}_{1})$ is a saddle point, we
have
\[
E_{P^{\lambda}}[(\xi-E_{P^{\lambda}}[\xi|\mathcal{C}])^{2}]-\alpha(P^{\lambda
})\leq E_{P^{\lambda}}[(\xi-\hat{\eta}_{1})^{2}]-\alpha(P^{\lambda})\leq
E_{\hat{P}_{1}}[(\xi-\hat{\eta}_{1})^{2}]-\alpha(\hat{P}_{1})=\beta.
\]
It yields that $E_{P^{\lambda}}[(\xi-E_{P^{\lambda}}[\xi|\mathcal{C}%
])^{2}]-\alpha(P^{\lambda})=\beta$. By (\ref{eq-unique-1}), we deduce that
$\hat{\eta}_{1}=\hat{\eta}_{2}$ $\mathbb{P}$-a.s..
\end{proof}

\subsubsection{Properties of the minimum mean square estimator}
Finally, in this subsection, we will list some properties of the MMSE $\rho(\xi|\mathcal{C})$.

\begin{proposition}
\label{prop-basic properties} If a convex operator $\rho(\cdot)$ is stable and
proper, then for any $\xi\in L_{\mathcal{F}}^{4p}(\mathbb{P})$, we have:

i) If $C_{1}\leq\xi(\omega)\leq C_{2}$ for two constants $C_{1}$ and $C_{2}$,
then $C_{1}\leq\rho(\xi|\mathcal{C})\leq C_{2}$;

ii) $\rho(-\xi|\mathcal{C})=-\rho(\xi|\mathcal{C})$;

iii) For any given $\eta_{0}\in L_{\mathcal{C}}^{2p}(\mathbb{P})$, we have
$\rho(\xi+\eta_{0}|\mathcal{C})=\rho(\xi|\mathcal{C})+\eta_{0}$;

iv) If $\xi$ is independent of the sub $\sigma$-algebra $\mathcal{C}$ under
every probability measure $P\in\mathcal{P}$, then $\rho(\xi|\mathcal{C})$ is a constant.
\end{proposition}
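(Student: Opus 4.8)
The plan is to reduce every item to the representation of the estimator established earlier. By Theorem \ref{existence} the estimator $\rho(\xi|\mathcal{C})$ exists, by Theorem \ref{unique} it is unique, and by Proposition \ref{Prop--unique} there is a $\hat{P}\in\mathcal{P}$ with $\rho(\xi|\mathcal{C})=E_{\hat{P}}[\xi|\mathcal{C}]$. Parts (i) and (iv) can be read off directly from this representation, while parts (ii) and (iii) follow from an elementary change of variable in the defining minimization combined with uniqueness.

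For (i), since $\rho$ is proper, $\hat{P}$ is equivalent to $\mathbb{P}$ and, being a probability measure, its conditional expectation is monotone and preserves constants. Hence $C_1\le\xi\le C_2$ gives $C_1=E_{\hat{P}}[C_1|\mathcal{C}]\le E_{\hat{P}}[\xi|\mathcal{C}]\le E_{\hat{P}}[C_2|\mathcal{C}]=C_2$, that is $C_1\le\rho(\xi|\mathcal{C})\le C_2$. For (iv), independence of $\xi$ and $\mathcal{C}$ under each $P\in\mathcal{P}$, in particular under $\hat{P}$, yields $E_{\hat{P}}[\xi|\mathcal{C}]=E_{\hat{P}}[\xi]$, which is a constant; thus $\rho(\xi|\mathcal{C})$ is a constant.

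For (ii) and (iii) I would exploit that $L_{\mathcal{C}}^{2p}(\mathbb{P})$ is a linear space, so the substitutions below stay inside it. For (ii), write $(-\xi-\eta)^2=(\xi-(-\eta))^2$; as $\eta$ ranges over $L_{\mathcal{C}}^{2p}(\mathbb{P})$ so does $-\eta$, hence $\inf_{\eta}\rho((-\xi-\eta)^2)=\inf_{\eta'}\rho((\xi-\eta')^2)$ and the minimizer for $-\xi$ is $-\rho(\xi|\mathcal{C})$; uniqueness then forces $\rho(-\xi|\mathcal{C})=-\rho(\xi|\mathcal{C})$. For (iii), with $\eta_0\in L_{\mathcal{C}}^{2p}(\mathbb{P})$ put $\eta'=\eta-\eta_0\in L_{\mathcal{C}}^{2p}(\mathbb{P})$, so that $(\xi+\eta_0-\eta)^2=(\xi-\eta')^2$; the two minimization problems then coincide and the minimizer for $\xi+\eta_0$ is $\eta_0+\rho(\xi|\mathcal{C})$, whence $\rho(\xi+\eta_0|\mathcal{C})=\rho(\xi|\mathcal{C})+\eta_0$ by uniqueness.

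These arguments are short because the heavy lifting is already done by the existence/uniqueness theory and the saddle-point representation of Proposition \ref{Prop--unique}; the only point requiring care is that in (ii) and (iii) the change of variable must keep the competitor inside the admissible class $L_{\mathcal{C}}^{2p}(\mathbb{P})$, which holds since this class is a vector space closed under negation and under translation by the $\mathcal{C}$-measurable element $\eta_0$. Properness (used in (i)) and the structure of $\mathcal{P}$ (used in (iv)) are precisely the hypotheses that make the representation applicable at each step, so I expect no genuine obstacle beyond bookkeeping the admissibility of the shifted and reflected estimators.
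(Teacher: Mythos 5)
Your proposal is correct and follows essentially the same route as the paper: parts (i) and (iv) are read off from the representation $\rho(\xi|\mathcal{C})\in\{E_{P}[\xi|\mathcal{C}];P\in\mathcal{P}\}$ furnished by Proposition \ref{Prop--unique}, and parts (ii) and (iii) come from the change of variables $\eta\mapsto-\eta$ and $\eta\mapsto\eta-\eta_{0}$ in the minimization together with the uniqueness of Theorem \ref{unique}. No substantive difference from the paper's argument.
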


\begin{proof}
i) If $C_{1}\leq\xi(\omega)\leq C_{2}$, then for any $P\in\mathcal{P}$,
$C_{1}\leq E_{P}[\xi|\mathcal{C}]\leq C_{2}$. According to the proof of
Theorem \ref{unique}, $\rho(\xi|\mathcal{C})\in\{E_{P}[\xi|\mathcal{C}%
];P\in\mathcal{P}\}$ which leads to $C_{1}\leq\rho(\xi|\mathcal{C})\leq C_{2}$.

ii) Since
\[
\rho(\xi-\rho(\xi|\mathcal{C}))^{2}=\inf_{\eta\in L_{\mathcal{C}}%
^{2p}(\mathbb{P})}\rho(\xi-\eta)^{2}=\inf_{\eta\in L_{\mathcal{C}}%
^{2p}(\mathbb{P})}\rho(\xi+\eta)^{2}=\inf_{\eta\in L_{\mathcal{C}}%
^{2p}(\mathbb{P})}\rho(-\xi-\eta)^{2},
\]
we have
\[
\rho(-\xi-(-\rho(\xi|\mathcal{C})))^{2}=\inf_{\eta\in L_{\mathcal{C}}%
^{2p}(\mathbb{P})}\rho(-\xi-\eta)^{2}.
\]
By Theorem \ref{unique}, $-\rho(\xi|\mathcal{C})=\rho(-\xi|\mathcal{C})$.

iii) Note that
\[
\rho(\xi+\eta_{0}-(\eta_{0}+\rho(\xi|\mathcal{C})))^{2}=\rho(\xi-\rho
(\xi|\mathcal{C}))^{2}=\inf_{\eta\in L_{\mathcal{C}}^{2p}(\mathbb{P})}\rho
(\xi-\eta)^{2}=\inf_{\eta\in L_{\mathcal{C}}^{2p}(\mathbb{P})}\rho(\xi
+\eta_{0}-\eta)^{2}.
\]
By Theorem \ref{unique}, we have $\eta_{0}+\rho(\xi|\mathcal{C})=\rho(\xi
+\eta_{0}|\mathcal{C})$.

iv) If $\xi$ is independent of the sub $\sigma$-algebra $\mathcal{C}$ under
every $P\in\mathcal{P}$, then $E_{P}[\xi|\mathcal{C}]$ is a constant for any
$P\in\mathcal{P}$. Since $\rho(\xi|\mathcal{C})\in\{E_{P}[\xi|\mathcal{C}%
];P\in\mathcal{P}\}$, we know that $\rho(\xi|\mathcal{C})$ is a constant. This completes the proof.
\end{proof}

\section{Appendix}
%% if no title is needed, leave empty \section*{}.
For the convenience of the reader, we list the main theorems used in our proofs.

\begin{theorem}
[Fan \cite{Fan} Theorem 2]\label{minmax} Let $\mathcal{X}$ be a compact Hausdorff space and $\mathcal{Y}$ be an arbitrary set. Let $F$ be a real valued function defined on $\mathcal{X}\times \mathcal{Y}$ such that, for every $y\in\mathcal{Y}$, $F(x,y)$ is a $l.s.c$(lower-semicontinuous) on $\mathcal{X}$. If $F$ is convex on $\mathcal{X}$ and concave on $\mathcal{Y}$, then\\
$$\min_{x\in\mathcal{X}}\sup_{y\in\mathcal{Y}}F(x,y)=\sup_{y\in\mathcal{Y}}\min_{x\in\mathcal{X}}F(x,y).$$
\end{theorem}

\begin{proof}
Refer to Theorem 2 in \cite{Fan}.
\end{proof}

\begin{theorem}
[Z\v{a}linescu \cite{Zal} Theorem 2.10.1]\label{saddle point} Let $A$ and $B$ be
two nonempty sets and $f$ from $A\times B$ to $\mathbb{R}\bigcup\{\infty\}$.
Then $f$ has saddle points, i.e., there exists $(\bar{x},\bar{y})\in A\times
B$ such that
\[
\forall x\in A,\,\forall y\in B:\quad f(x,\bar{y})\leq f(\bar{x},\bar{y})\leq
f(\bar{x},y)
\]
if and only if
\[
\inf_{y\in B}f(\bar{x},y)=\max_{x\in A}\inf_{y\in B}f(x,y)=\min_{y\in B}%
\sup_{x\in A}f(x,y)=\sup_{x\in A}f(x,\bar{y}).
\]

\end{theorem}

\begin{theorem}
[Girsanov \cite{Girsanov}]\label{convex AP} We suppose that $\phi(t,\omega)$
satisfies the following conditions:\newline(1)\ $\phi(\cdot,\cdot)$ are
measurable in both variables;\newline(2)\ $\phi(t,\cdot)$ is $\mathcal{F}_{t}%
$-measurable for fixed $t$;\newline(3)\ $\int_{0}^{T}|\phi(t,\omega
)|^{2}dt<\infty$ almost everywhere; and $0<c_{1}\leq|\phi(t,\omega)|\leq
c_{2}$ for almost all $(t,\omega)$, then $\exp[\alpha\zeta_{s}^{t}(\phi)]$ is
integrable and for $\alpha>1$
\begin{equation}
\exp\big[\frac{(\alpha^{2}-\alpha)}{2}(t-s)c_{1}^{2}\big]\leq\mathbb{E}%
[\exp[\alpha\zeta_{s}^{t}(\phi)]]\leq\exp\big[\frac{(\alpha^{2}-\alpha)}%
{2}(t-s)c_{2}^{2}\big] \label{conclusion}%
\end{equation}
where $\zeta_{s}^{t}(\phi)=\int_{s}^{t}\phi(u,\omega)dw_{u}-\frac{1}{2}%
\int_{s}^{t}\phi^{2}(u,\omega)du$.
\end{theorem}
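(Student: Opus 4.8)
The plan is to exhibit $\exp[\alpha\zeta_s^t(\phi)]$ as the product of a genuine Dol\'eans--Dade exponential martingale and a scalar factor that can be squeezed between two deterministic constants, after which the two-sided bound follows simply by pulling those constants out of the expectation.

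First I would complete the square in the exponent. Writing $\alpha\zeta_s^t(\phi)=\alpha\int_s^t\phi\,dw_u-\frac{\alpha}{2}\int_s^t\phi^2\,du$ and comparing with the exponent $\alpha\int_s^t\phi\,dw_u-\frac{\alpha^2}{2}\int_s^t\phi^2\,du$ of the stochastic exponential of $\alpha\phi$, one reads off the algebraic identity
\[
\exp[\alpha\zeta_s^t(\phi)]=M_s^t\cdot\exp\Big(\tfrac{\alpha^2-\alpha}{2}\int_s^t\phi^2(u,\omega)\,du\Big),
\]
where $M_s^t:=\exp\big(\alpha\int_s^t\phi\,dw_u-\tfrac{\alpha^2}{2}\int_s^t\phi^2\,du\big)$. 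The second factor is where the two-sided estimate originates: the hypothesis $c_1\le|\phi|\le c_2$ gives $c_1^2(t-s)\le\int_s^t\phi^2\,du\le c_2^2(t-s)$, and since $\alpha>1$ forces $\alpha^2-\alpha>0$, this deterministic factor lies between $\exp[\tfrac{\alpha^2-\alpha}{2}(t-s)c_1^2]$ and $\exp[\tfrac{\alpha^2-\alpha}{2}(t-s)c_2^2]$.

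Next I would take expectations. The crucial input is that $M_s^t$ is a true martingale with $\mathbb{E}[M_s^t]=1$, not merely a supermartingale. Because $|\phi|\le c_2$, the integrand $\alpha\phi$ is bounded, so Novikov's condition $\mathbb{E}\exp(\tfrac{1}{2}\int_s^t\alpha^2\phi^2\,du)\le\exp(\tfrac{1}{2}\alpha^2c_2^2(t-s))<\infty$ holds; hence $M_s^t$ is a uniformly integrable martingale normalized to $1$ at time $s$. Multiplying the deterministic two-sided bound by the nonnegative factor $M_s^t$ and taking expectation lets the constants factor out, and $\mathbb{E}[M_s^t]=1$ then yields exactly the asserted inequalities \eqref{conclusion}. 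Finiteness of the upper bound simultaneously delivers the claimed integrability of $\exp[\alpha\zeta_s^t(\phi)]$.

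I expect the only delicate point to be the justification that $\mathbb{E}[M_s^t]=1$ rather than the automatic $\mathbb{E}[M_s^t]\le 1$ valid for any exponential supermartingale; this is precisely where a genuine theorem (Novikov's criterion) is invoked. Here the boundedness hypothesis $|\phi|\le c_2$ makes that criterion immediate, so the obstacle is mild, and the remaining manipulations are routine bounding of a deterministic integral.
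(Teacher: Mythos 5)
Your proposal is correct. Note, however, that the paper itself gives no proof of this statement at all: it is quoted in the appendix as a known result, with attribution to Girsanov's 1960 paper, and is used as a black box (in Lemma \ref{convex compact} and Theorem \ref{convex equivalent representation}). So there is no in-paper argument to compare against; what you have supplied is the standard modern proof. The decomposition $\exp[\alpha\zeta_s^t(\phi)]=M_s^t\exp\bigl(\frac{\alpha^2-\alpha}{2}\int_s^t\phi^2\,du\bigr)$ is exactly right, and you correctly isolate the one genuinely nontrivial point: the lower bound needs $\mathbb{E}[M_s^t]=1$ (the supermartingale inequality $\mathbb{E}[M_s^t]\le 1$ only gives the upper bound), which your appeal to Novikov's criterion settles since $|\alpha\phi|\le\alpha c_2$ is bounded. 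Two minor remarks. First, a wording slip: the factor $\exp\bigl(\frac{\alpha^2-\alpha}{2}\int_s^t\phi^2\,du\bigr)$ is random, not deterministic; it is merely squeezed between deterministic constants, which is all your argument actually uses, and you correctly avoid any spurious independence claim by multiplying the pointwise bounds by $M_s^t\ge 0$ before taking expectations. Second, as a historical matter Girsanov's original proof could not invoke Novikov's criterion (1972); for bounded integrands the identity $\mathbb{E}[M_s^t]=1$ is obtained elementarily by approximating $\phi$ with simple processes and computing conditional Gaussian exponential moments interval by interval. Your route and that classical one are interchangeable here, since boundedness of $\phi$ makes either verification immediate.
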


\begin{theorem}
[K$\hat{o}$saku Yosida \cite{Yosida}]\label{Mazur} Let $(X,\Vert\cdot\Vert)$
be a Banach space and $\{x_{n}\}_{n\in\mathbb{N}}$ be a sequence in $X$ that
converges weakly to some $x\in X$. Then there exists, for any $\epsilon>0$, a
convex combination $\sum_{j=1}^{n}\alpha_{j}x_{j},\ (\alpha_{j}\geq
0,\ \sum_{j=1}^{n}\alpha_{j}=1)$ such that $\Vert x-\sum_{j=1}^{n}\alpha
_{j}x_{j}\Vert\leq\epsilon$.
\end{theorem}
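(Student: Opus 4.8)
The plan is to deduce this classical statement (Mazur's lemma) from the Hahn--Banach separation theorem, via the principle that for a convex set the norm closure and the weak closure coincide. First I would let $C=\mathrm{conv}\{x_{n}:n\in\mathbb{N}\}$ denote the convex hull of the terms of the sequence, so that every element of $C$ is precisely a finite convex combination $\sum_{j}\alpha_{j}x_{j}$ with $\alpha_{j}\geq 0$ and $\sum_{j}\alpha_{j}=1$. The assertion to be proved is then exactly that $x$ lies in the norm closure $\overline{C}^{\,\|\cdot\|}$, since that is the statement that for every $\epsilon>0$ some such convex combination satisfies $\|x-\sum_{j}\alpha_{j}x_{j}\|\leq\epsilon$.

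The central step is to show that for the convex set $C$ one has $\overline{C}^{\,\|\cdot\|}=\overline{C}^{\,w}$, where $\overline{C}^{\,w}$ denotes the closure in the weak topology $\sigma(X,X')$. The inclusion $\overline{C}^{\,\|\cdot\|}\subseteq\overline{C}^{\,w}$ is immediate, because the weak topology is coarser than the norm topology, so any weakly closed set containing $C$ is also norm closed. For the reverse inclusion, which is the only place convexity enters, I would argue by contraposition: if $y\notin\overline{C}^{\,\|\cdot\|}$, then the singleton $\{y\}$ and the closed convex set $\overline{C}^{\,\|\cdot\|}$ are disjoint, and the geometric Hahn--Banach separation theorem furnishes a continuous linear functional $f\in X'$ and a constant $c$ with $f(y)>c\geq f(z)$ for all $z\in\overline{C}^{\,\|\cdot\|}$. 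The half-space $\{w\in X:f(w)>c\}$ is then a weakly open neighbourhood of $y$ that is disjoint from $C$, whence $y\notin\overline{C}^{\,w}$; this yields $\overline{C}^{\,w}\subseteq\overline{C}^{\,\|\cdot\|}$, so the two closures agree.

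To finish I would invoke the hypothesis that $x_{n}$ converges weakly to $x$. This means $x$ belongs to the weak closure of the set $\{x_{n}:n\in\mathbb{N}\}$, which is contained in the weak closure of the larger convex set $C$; hence $x\in\overline{C}^{\,w}$. Combining this with the equality of closures established above gives $x\in\overline{C}^{\,\|\cdot\|}$, which is precisely the desired conclusion. The main obstacle is the separation step: one must apply Hahn--Banach in the correct geometric form (separating a point from a disjoint closed convex set in a normed space) and then verify that the resulting separating functional produces a genuinely weakly open half-space, so that the information is transferred from the norm topology to the coarser weak topology — this is exactly the mechanism that forces the weak limit $x$ to be norm-approximable by convex combinations of the $x_{n}$.
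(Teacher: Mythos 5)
Your proof is correct, and there is nothing to compare it against within the paper itself: the statement appears in the appendix as a quoted classical result (Mazur's lemma), cited to Yosida's \emph{Functional Analysis} without proof. Your argument --- reduce the claim to $x\in\overline{\mathrm{conv}\{x_{n}\}}^{\,\Vert\cdot\Vert}$, prove that norm closure and weak closure of a convex set coincide via Hahn--Banach strict separation of a point from a disjoint norm-closed convex set, and note that weak convergence places $x$ in the weak closure --- is exactly the standard proof found in the cited source, with all the key steps (the separating functional being weakly continuous, hence the half-space $\{f>c\}$ being weakly open, and padding with zero coefficients to write the approximant as $\sum_{j=1}^{n}\alpha_{j}x_{j}$) handled correctly.
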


\begin{lemma}\label{calculate}
Let $\hat{\eta}_1=E_{\hat{P}_1}[\xi|\mathcal{C}]$, $\hat{\eta}_2=E_{\hat{P}_2}[\xi|\mathcal{C}]$, $P^{\lambda}=\lambda\hat{P}_1+(1-\lambda)\hat{P}_2$, $\lambda_{\hat{P}_1}=\lambda E_{P^{\lambda}}\big[\frac{d\hat{P}_1}{dP^{\lambda}}|\mathcal{C}\big]$, $\lambda_{\hat{P}_2}=(1-\lambda) E_{P^{\lambda}}\big[\frac{d\hat{P}_2}{dP^{\lambda}}|\mathcal{C}\big]$. Then we have
\begin{equation*}
\begin{aligned}
&E_{P^{\lambda}}[(\xi-\lambda_{\hat{P}_1}\hat{\eta}_1-\lambda_{\hat{P}_2}\hat{\eta}_2)^2]-\alpha(P^{\lambda})\\
=&\lambda E_{\hat{P}_1}\big[(\xi-\hat{\eta}_1)^2\big]+(1-\lambda) E_{\hat{P}_2}\big[(\xi-\hat{\eta}_2)^2\big]\\
&+\lambda E_{\hat{P}_1}\big[\lambda^2_{\hat{P}_2}(\hat{\eta}_1-\hat{\eta}_2)^2\big]+(1-\lambda)E_{\hat{P}_2}\big[\lambda^2_{\hat{P}_1}(\hat{\eta}_1-\hat{\eta}_2)^2\big]-\alpha(P^{\lambda}).
\end{aligned}
\end{equation*}
\end{lemma}

\begin{proof}
\begin{equation}\label{A1}
\begin{aligned}
&E_{P^{\lambda}}[(\xi-\lambda_{\hat{P}_1}\hat{\eta}_1-\lambda_{\hat{P}_2}\hat{\eta}_2)^2]-\alpha(P^{\lambda})\\
=&E_{P^{\lambda}}\big[\big(\lambda_{\hat{P}_1}(\xi-\hat{\eta}_1)+\lambda_{\hat{P}_2}(\xi-\hat{\eta}_2)\big)^2\big]-\alpha(P^{\lambda})\\
=&E_{P^{\lambda}}\big[\lambda^2_{\hat{P}_1}(\xi-\hat{\eta}_1)^2+\lambda^2_{\hat{P}_2}(\xi-\hat{\eta}_2)^2+2\lambda_{\hat{P}_1}\lambda_{\hat{P}_2}(\xi-\hat{\eta}_1)(\xi-\hat{\eta}_1)\big]-\alpha(P^{\lambda})\\
=&E_{P^{\lambda}}\big[\lambda_{\hat{P}_1}(\xi-\hat{\eta}_1)^2+\lambda_{\hat{P}_2}(\xi-\hat{\eta}_2)^2-\lambda_{\hat{P}_1}\lambda_{\hat{P}_2}(\hat{\eta}_1-\hat{\eta}_2)^2\big]-\alpha(P^{\lambda})\\
=&\lambda E_{\hat{P}_1}[\lambda_{\hat{P}_1}(\xi-\hat{\eta}_1)^2]+(1-\lambda)E_{\hat{P}_2}[\lambda_{\hat{P}_1}(\xi-\hat{\eta}_1)^2]+\lambda E_{\hat{P}_1}[\lambda_{\hat{P}_2}(\xi-\hat{\eta}_2)^2]
\\&+(1-\lambda)E_{\hat{P}_2}[\lambda_{\hat{P}_2}(\xi-\hat{\eta}_2)^2]-\lambda E_{\hat{P}_1}[\lambda_{\hat{P}_1}\lambda_{\hat{P}_2}(\hat{\eta}_1-\hat{\eta}_2)^2]\\
&-(1-\lambda)E_{\hat{P}_2}[\lambda_{\hat{P}_1}\lambda_{\hat{P}_2}(\hat{\eta}_1-\hat{\eta}_2)^2]-\alpha(P^{\lambda})\\
=&\lambda E_{\hat{P}_1}[(\xi-\hat{\eta}_1)^2]+(1-\lambda)E_{\hat{P}_2}[(\xi-\hat{\eta}_1)^2]-(1-\lambda)E_{\hat{P}_2}[\lambda_{\hat{P}_2}(\xi-\hat{\eta}_1)^2]\\
&+\lambda E_{\hat{P}_1}[(\xi-\hat{\eta}_2)^2]-\lambda E_{\hat{P}_1}[\lambda_{\hat{P}_1}(\xi-\hat{\eta}_2)^2]+(1-\lambda) E_{\hat{P}_2}[(\xi-\hat{\eta}_2)^2]\\&-\lambda E_{\hat{P}_1}[\lambda_{\hat{P}_2}(\xi-\hat{\eta}_1)^2]-(1-\lambda) E_{\hat{P}_2}[\lambda_{\hat{P}_1}(\xi-\hat{\eta}_2)^2]-\lambda E_{\hat{P}_1}[\lambda_{\hat{P}_1}\lambda_{\hat{P}_2}(\hat{\eta}_1-\hat{\eta}_2)^2]\\
&-(1-\lambda)E_{\hat{P}_2}[\lambda_{\hat{P}_1}\lambda_{\hat{P}_2}(\hat{\eta}_1-\hat{\eta}_2)^2]-\alpha(P^{\lambda}).
\end{aligned}
\end{equation}

Since
$$(1-\lambda)E_{\hat{P}_2}[(\xi-\hat{\eta}_1)^2]=(1-\lambda)E_{\hat{P}_2}[(\lambda_{\hat{P}_1}+\lambda_{\hat{P}_2})(\xi-\hat{\eta}_1)^2]$$
and
$$\lambda E_{\hat{P}_1}[(\xi-\hat{\eta}_2)^2]=\lambda  E_{\hat{P}_1}[(\lambda_{\hat{P}_1}+\lambda_{\hat{P}_2})(\xi-\hat{\eta}_2)^2],$$
it results in that
\begin{equation*}
\begin{aligned}
\eqref{A1}&=\lambda E_{\hat{P}_1}[\lambda_{\hat{P}_2}(\xi-\hat{\eta}_2)^2-\lambda_{\hat{P}_2}(\xi-\hat{\eta}_1)^2]+(1-\lambda)E_{\hat{P}_2}[\lambda_{\hat{P}_1}(\xi-\hat{\eta}_1)^2-\lambda_{\hat{P}_1}(\xi-\hat{\eta}_2)^2]\\
&-\lambda E_{\hat{P}_1}[\lambda_{\hat{P}_1}\lambda_{\hat{P}_2}(\hat{\eta}_1-\hat{\eta}_2)^2]-(1-\lambda)E_{\hat{P}_2}[\lambda_{\hat{P}_1}\lambda_{\hat{P}_2}(\hat{\eta}_1-\hat{\eta}_2)^2]+\lambda E_{\hat{P}_1}[(\xi-\hat{\eta}_1)^2]\\
&+(1-\lambda)E_{\hat{P}_2}[(\xi-\hat{\eta}_2)^2].
\end{aligned}
\end{equation*}
Firstly, we calculate the items with respect to the expectation $\lambda E_{\hat{P}_1}[\cdot]$, the following relations hold:
\begin{equation*}
\begin{aligned}
&\lambda_{\hat{P}_2}(\xi^2+\hat{\eta}_2^2-2\xi\hat{\eta}_2)-\lambda_{\hat{P}_2}(\xi^2+\hat{\eta}_1^2-2\xi\hat{\eta}_1)-\lambda_{\hat{P}_1}\lambda_{\hat{P}_2}(\hat{\eta}_1-\hat{\eta}_2)^2\\
=&\lambda_{\hat{P}_2}[2\hat{\eta}_1(\hat{\eta}_2-\hat{\eta}_1)+2\xi(\hat{\eta}_1-\hat{\eta}_2)]+\lambda_{\hat{P}_2}^2(\hat{\eta}_1-\hat{\eta}_2)^2\\
=&\lambda_{\hat{P}_2}[2(\xi-\hat{\eta}_1)(\hat{\eta}_1-\hat{\eta}_2)]+\lambda_{\hat{P}_2}^2(\hat{\eta}_1-\hat{\eta}_2)^2.
\end{aligned}
\end{equation*}

Since $\lambda_{\hat{P}_2}(\hat{\eta}_1-\hat{\eta}_2)$ is $\mathcal{C}$-measurable and $(\xi-\hat{\eta}_1)$ is orthogonal with $\sigma$-algebra $\mathcal{C}$ under probability measure $\hat{P}_1$, it results that
$$\lambda E_{\hat{P}_1}[\lambda_{\hat{P}_2}2(\xi-\hat{\eta}_1)(\hat{\eta}_1-\hat{\eta}_2)]=\lambda E_{\hat{P}_1}[\lambda_{\hat{P}_2}(\hat{\eta}_1-\hat{\eta}_2)]E_{\hat{P}_1}[2(\xi-\hat{\eta}_1)]=0.$$

Secondly, we can also similarly calculate the items with respect to the expectation $(1-\lambda) E_{\hat{P}_2}[\cdot]$. Finally, the equation \eqref{A1} can be expressed as

\begin{equation*}
\begin{aligned}
&E_{P^{\lambda}}[(\xi-\lambda_{\hat{P}_1}\hat{\eta}_1-\lambda_{\hat{P}_2}\hat{\eta}_2)^2]-\alpha(P^{\lambda})\\
=&\lambda E_{\hat{P}_1}\big[(\xi-\hat{\eta}_1)^2\big]+(1-\lambda) E_{\hat{P}_2}\big[(\xi-\hat{\eta}_2)^2\big]\\
&+\lambda E_{\hat{P}_1}\big[\lambda^2_{\hat{P}_2}(\hat{\eta}_1-\hat{\eta}_2)^2\big]+(1-\lambda)E_{\hat{P}_2}\big[\lambda^2_{\hat{P}_1}(\hat{\eta}_1-\hat{\eta}_2)^2\big]-\alpha(P^{\lambda}).
\end{aligned}
\end{equation*}
This completes the proof.
\end{proof}

\end{document}